\pgfplotsset{compat=1.14}
\newcommand{\red}[1]{\textcolor{red}{#1}}
\newcommand{\email}[1]{{\tt #1}}
\newcommand{\R}{\mathbb{R}}
\newcommand{\norm}[1]{\|#1\|}
\newcommand{\dist}[1]{{\rm dist}(#1)}
\newcommand{\mv}{\,\big |\,}
\newcommand{\bmv}{\,\Big |\,}
\newcommand{\B}{{\cal B}}
\newcommand{\K}{{\cal K}}
\newcommand{\M}{{\cal M}}
\newcommand{\Sp}{{\mathcal S}}
\newcommand{\F}{{\cal F}}
\newcommand{\Z}{{\cal Z}}
\newcommand{\setto}[1]{\mathop{\rightarrow}\limits^#1}
\newcommand{\longsetto}[1]{\mathop{\longrightarrow}\limits^{#1}}
\newcommand{\skalp}[1]{\langle #1\rangle}
\newcommand{\xb}{\bar x}
\newcommand{\yb}{\bar y}
\newcommand{\zb}{\bar z}
\newcommand{\AT}[2]{{\textstyle{#1\atop#2}}}
\newcommand{\OO}{{\cal O}}
\newcommand{\cl}{{\rm cl\,}}
\newcommand{\co}{{\rm conv\,}}
\newcommand{\gph}{\mathrm{gph}\,}
\newcommand{\tto}{\rightrightarrows}
\newcommand{\Limsup}{\mathop{{\rm Lim}\,{\rm sup}}}
\newcommand{\myvec}[1]{\begin{pmatrix}#1\end{pmatrix}}
\newcommand{\SCD}{SCD\ }
\newcommand{\orth}{{\rm orth\,}}
\newcommand{\ssstar}{semismooth$^{*}$ }
\newcommand{\rge}{{\rm rge\;}}
\newlength{\myAlgBox}
\newtheorem{theorem}{Theorem}[section]
\newtheorem{proposition}[theorem]{Proposition}
\newtheorem{lemma}[theorem]{Lemma}
\newtheorem{corollary}[theorem]{Corollary}
\newtheorem{definition}[theorem]{Definition}
\newtheorem{example}[theorem]{Example}
\title{On the isolated calmness property of implicitly defined multifunctions}
\author{Helmut Gfrerer\thanks{Institute of Computational Mathematics, Johannes Kepler University
Linz, A-4040 Linz, Austria; \email{helmut.gfrerer@jku.at}}
 \and   Ji\v{r}\'{i} V. Outrata\thanks{Institute of Information Theory and Automation, Czech Academy of
 Sciences, 18208 Prague, Czech Republic, and Centre for
              Informatics and Applied Optimization, Federation University of Australia, POB 663,
              Ballarat,  Vic 3350, Australia,  \email{outrata@utia.cas.cz}}}
\date{}
\begin{document}
\maketitle
\begin{abstract}
    The paper deals with an extension of the available theory of SCD (subspace containing derivatives) mappings to mappings between spaces of different dimensions. This  extension enables us to derive workable sufficient conditions for the isolated calmness of implicitly defined multifunctions {\em around}  given reference points. This stability property differs substantially from isolated calmness {\em at} a point and, possibly in conjunction with the Aubin property, offers a new useful stability concept. The application area includes a broad class of parameterized generalized equations, where the respective conditions ensure a rather strong type of Lipschitztan behavior of their solution maps.
\end{abstract}

{\bf Key words.} strong metric subregularity and isolated calmness on a neighborhood, generalized derivatives, semismoothness${}^*$, implicit multifunctions.

{\bf AMS Subject classification.} 65K10, 65K15, 90C33.

\section{Introduction}
Analysis of Lipschitzian stability of set-valued mappings is one of the most important parts of modern variational analysis. Above all, the notions of the Aubin and the calmness property play a central role both in parameter-dependent equilibria (especially in presence of unknown parameters) and in qualification conditions of generalized differential calculus. But also the so-called isolated calmness and (the existence of) single-valued Lipschitzian localization have a great importance, e.g., in connection with Newton-type methods for nonsmooth problems.

There are various pointwise characterizations of the above mentioned stability notions in terms of generalized derivatives as, e.g., the Mordukhovich or the Levy-Rockafellar criteria.

Recently, in connection with the so-called SCD (subspace containing derivatives) mappings and the associated SCD \ssstar Newton method in \cite{GfrOut22a}, the authors derived for such mappings a characterization of the strong metric subregularity on a neighborhood which amounts (\cite[Theorem 3I.2]{DoRo14}) to the isolated calmness on a neighborhood of their inverses.

Both these properties differ from their counterparts {\em at a point} rather substantially and one obtains thus a useful amendment to the available arsenal of regularity and stability properties. In \cite{GfrOut22a}, one finds both a characterization of the strong metric subregularity on a neighborhood for general mappings and special mappings having SCD and \ssstar properties. These characterizations of strong metric subregularity, however, were presented in \cite{GfrOut22a} only for mappings between spaces of the same dimension. So, in order to derive workable criteria for the isolated calmness around a reference point for, say, a class of implicitly given multifunctions, the basic framework has to be extended. This  extension, along with the corresponding stability results, is the aim of the present paper.

The plan of the paper is as follows. In the next section one finds a necessary background from variational analysis which is used throughout the whole paper. Section 3 contains the basic elements of the theory of SCD mappings between spaces of different dimensions. In this development one uses the corresponding part of   \cite{GfrOut22a} as a template. In Section 4 several calculus rules are derived which are needed in the proofs of the stability results presented in Section 5. The main statement (Theorem \ref{Th5.1}) provides us with two types of conditions ensuring that the implicit multifunction, defined via the inclusion
\[
0 \in H(x,y)
\]
 possesses the {\em isolated calmness property on a neighborhood} of the given {\em reference point}. One of these conditions is based on the so-called {\em outer limiting graphical derivative} and works for general mappings $H$, whereas the other one is tailored to \ssstar SCD mappings and is available in a primal and a dual form.  To illustrate the nature of these conditions, we use a class of parameterized {\em generalized equations} (GEs). In case of variational inequalities with polyhedral constraint sets, we work out these conditions to an efficient form expressed in terms of faces of the critical cone to the constraint set. It appears that the specialized condition is much simpler to verify than the general one.
 In addition, we present in Section 5 another condition expressed in terms of the limiting (Mordukhovich) coderivative of $H$ which ensures that the respective implicit mapping has both the Aubin and the isolated calmness property around the reference point.

The following notation is employed. Given a linear subspace $L\subseteq \R^n$, $ L^\perp$ denotes its orthogonal
complement and, for a closed cone $K$ with vertex at the origin, $K^\circ$ signifies its (negative) polar.
Further, given a multifunction $F$, $\gph F:=\{(x,y)\mv y\in F(x)\}$ stands for its
graph. For an element $u\in\R^n$, $\norm{u}$ denotes its Euclidean norm and  $\B_\delta(u)$ denotes the closed ball around $u$
with radius $\delta$. The closed unit ball in $\R^n$ is denoted by $\B_{\R^n}$. In a product space we use the norm $\norm{(u,v)}:=\sqrt{\norm{u}^2+\norm{v}^2}$. Given
an $m\times n$ matrix $A$, we employ the operator norm $\norm{A}$ with respect to the Euclidean norm and we denote the range of A by $\rge A$. Given a set $\Omega\subset\R^s$, we define the distance of a point $x$ to $\Omega$ by $d_\Omega(x):=\dist{x,\Omega}:=\inf\{\norm{y-x}\mv y\in\Omega\}$ and the indicator function is denoted by $\delta_\Omega$. Finally, $x\setto{\Omega}\xb$ denotes comvergence within the set $\Omega$.  When a mapping $F:\R^n\to\R^m$ is differentiable at $x$, we denote by $\nabla F(x)$ its Jacobian.

\section{Background from variational analysis}
Throughout the whole paper, we will frequently use  the following basic notions of modern
variational analysis. All the sets under consideration are supposed to be {\em locally closed} around the points in question without further mentioning.
 \begin{definition}\label{DefVarGeom}
 Let $A$  be a  set in $\mathbb{R}^{s}$ and let $\bar{x} \in A$. Then
\begin{enumerate}
 \item [(i)]The  {\em tangent (contingent, Bouligand) cone} to $A$ at $\bar{x}$ is given by
 \[T_{A}(\bar{x}):=\Limsup\limits_{t\downarrow 0} \frac{A-\bar{x}}{t},\]
 the {\em paratingent cone} to $A$ at $\xb$ is given by
 \[T^P_A(\xb):=\Limsup\limits_{\AT{x\setto{{A}}\xb}{t\downarrow 0}} \frac{A- x}{t}\]
 and the {\em outer limiting tangent cone} to $A$ at $\xb$ is defined as
     \begin{equation}\label{EqLimTanCone}
      T^\sharp_A(\xb):=\Limsup_{x\setto{{A}}\xb} T_A(x)= \Limsup_{x\setto{{A}}\xb}\Big(\Limsup_{t\downarrow 0} \frac {A-x}t\Big).
    \end{equation}

 \item[(ii)] The set
 \[\widehat{N}_{A}(\bar{x}):=\big(T_{A}(\bar{x})\big)^{\circ}\]
 is the {\em regular (Fr\'{e}chet) normal cone} to $A$ at $\bar{x}$, and

 \[N_{A}(\bar{x}):=\Limsup\limits_{\stackrel{A}{x \rightarrow \bar{x}}} \widehat{N}_{A}(x)\]
 is the {\em limiting (Mordukhovich) normal cone} to $A$ at $\bar{x}$.
 \end{enumerate}

\end{definition}
In this definition ''$\Limsup$'' stands for the Painlev\' e-Kuratowski {\em outer (upper) set limit}, see, e.g.,\cite{AubFra90}.
The outer limiting tangent cone $T^\sharp_A(\xb)$ was very recently defined in \cite{GfrOut22a} and it is always contained in the paratingent cone $T^P_A(\xb)$. All the other objects from variational geometry are well-known and can be found in standard textbooks, see, e.g., \cite{RoWe98}.

If $A$ is convex, then $\widehat{N}_{A}(\bar{x})= N_{A}(\bar{x})$ amounts to the classical normal cone in
the sense of convex analysis and we will  write $N_{A}(\bar{x})$.

The above listed cones enable us to describe the local behavior of set-valued maps via various
generalized derivatives. All the set-valued mappings under consideration are supposed to have {\em locally closed graph} around the points in question.

\begin{definition}\label{DefGenDeriv}
Consider a  multifunction $F:\R^n\tto\R^m$ and let $(\xb,\yb)\in \gph F$.
\begin{enumerate}
\item[(i)]
 The multifunction $DF(\xb,\yb):\R^n\tto\R^m$ given by $\gph DF(\xb,\yb)=T_{\gph F}(\xb,\yb)$ is called the {\em graphical derivative} of $F$ at $(\xb,\yb)$.
\item[(ii)] The {\em outer limiting graphical derivative} of $F$ at $(\xb,\yb)$ is the multifunction
    $D^\sharp F(\xb,\yb):\R^n\tto\R^m$ given by
    \[\gph D^\sharp F(\xb,\yb)= T^\sharp_{\gph F}(\xb,\yb).\]

 \item[(iii)] The multifunction $D_*F(\xb,\yb):\R^n\tto\R^m$ given by $\gph D_*F(\xb,\yb)=T^P_{\gph F}(\xb,\yb)$ is called the {\em  strict (paratingent) derivative} of $F$ at $(\xb,\yb)$.
\item[(iv)]
 The multifunction $\widehat D^\ast F(\xb,\yb ): \R^m\tto\R^n$  defined by
 \[ \gph \widehat D^\ast F(\xb,\yb )=\{(y^*,x^*)\mv (x^*,-y^*)\in \widehat N_{\gph F}(\xb,\yb)\}\]
is called the {\em regular (Fr\'echet) coderivative} of $F$ at $(\xb,\yb )$.
\item [(v)]  The multifunction $D^\ast F(\xb,\yb ): \R^m \tto \R^n$,  defined by
 \[ \gph D^\ast F(\xb,\yb )=\{(y^*,x^*)\mv (x^*,-y^*)\in N_{\gph F}(\xb,\yb)\}\]
is called the {\em limiting (Mordukhovich) coderivative} of $F$ at $(\xb,\yb )$.
\end{enumerate}
\end{definition}
The outer limiting graphical derivative has been introduced by the authors in  \cite{GfrOut22a}.

If $F$ is single-valued, we can omit the second argument and write $DF(x)$, $\widehat D^*F(x),\ldots$ instead of $DF\big(x,F(x)\big)$, $\widehat D^*F\big(x,F(x)\big),\ldots$. However, be aware that when considering limiting objects at $x$ where $F$ is not continuous, it is not enough to consider only sequences $x_k\to x$ but we must work with sequences $\big(x_k,F(x_k)\big)\to\big(x,F(x)\big)$.

\begin{definition}
  Let $U\subset \R^n$ be open and consider a mapping  $F:U\to \R^m$. The {\em B-Jacobian} of $F$ at $x\in U$ is defined as
  \begin{equation}
    \overline{\nabla} F(x):=\{A\mv \exists x_k\to x: \mbox{$F$ is Fr\'echet differentiable at $x_k$ and }A=\lim_{k\to\infty}\nabla F(x_k)\}.
  \end{equation}
\end{definition}
Recall that the Clarke Generalized Jacobian is given by $\co\overline{\nabla} F(x)$, i.e., the convex hull of the B-Jacobian.

There exists the following relation between the B-Jacobian and the limiting coderivative of a single-valued mapping $F$, which states that every element from the B-Jacobian defines a certain subspace contained in the graph of the coderivative.
\begin{proposition}[{\cite[Proposition 2.4]{GfrOut22a}}]\label{PropBSubdiffCoderiv}
  Let $U\subset \R^n$ be open and let $F:U\to \R^m$ be a mapping.  Let $F$ be continuous at $x\in U$  and let $A\in \overline{\nabla} F(x)$. Then
  \[ (y^*,A^Ty^*)\in\gph D^*F(x)\ \forall y^*\in\R^m.\]
\end{proposition}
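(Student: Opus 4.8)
The plan is to exploit the description of the limiting normal cone of $\gph F$ as an outer limit of regular normal cones taken along points where $F$ is differentiable. First I would unwind the hypothesis: since $A\in\overline{\nabla}F(x)$, there is a sequence $x_k\to x$ such that $F$ is Fr\'echet differentiable at each $x_k$ and $\nabla F(x_k)\to A$. At such a point the tangent cone $T_{\gph F}\big(x_k,F(x_k)\big)$ equals the graph of the linear map $\nabla F(x_k)$, i.e.\ the subspace $\{(u,\nabla F(x_k)u)\mv u\in\R^n\}$: the inclusion ``$\supseteq$'' is immediate from the definition of Fr\'echet differentiability, and ``$\subseteq$'' follows because any tangent direction $(u,w)$ must satisfy $w=\nabla F(x_k)u$ by the same first-order expansion. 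Since the polar of a subspace is its orthogonal complement, this gives
\[\widehat N_{\gph F}\big(x_k,F(x_k)\big)=\{(-\nabla F(x_k)^Tv,\,v)\mv v\in\R^m\}.\]

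Next, fixing an arbitrary $y^*\in\R^m$ and taking $v=-y^*$ above, I would obtain $\big(\nabla F(x_k)^Ty^*,-y^*\big)\in\widehat N_{\gph F}\big(x_k,F(x_k)\big)$ for every $k$. Because $F$ is continuous at $x$, we have $F(x_k)\to F(x)$, hence $\big(x_k,F(x_k)\big)\setto{\gph F}\big(x,F(x)\big)$, while $\nabla F(x_k)^Ty^*\to A^Ty^*$ since $\nabla F(x_k)\to A$. Passing to the limit in the defining outer-limit formula for $N_{\gph F}\big(x,F(x)\big)$ then yields $\big(A^Ty^*,-y^*\big)\in N_{\gph F}\big(x,F(x)\big)$, which by the definition of the limiting coderivative is exactly $(y^*,A^Ty^*)\in\gph D^*F(x)$; as $y^*$ was arbitrary, this completes the argument.

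I do not expect a genuine obstacle here --- once the tangent cone at a differentiability point is identified, the rest is bookkeeping. The one point needing care is that the limiting normal cone requires convergence \emph{within} $\gph F$, i.e.\ $\big(x_k,F(x_k)\big)\to\big(x,F(x)\big)$, which is precisely why continuity of $F$ at $x$ (and not merely, say, local boundedness) is assumed: without it the sequence $\{F(x_k)\}$ need not converge to $F(x)$ and one would only obtain an element of the limiting normal cone at some other point of the graph. The other place to be attentive is keeping the signs consistent when translating between the normal cone of the graph and the coderivative.
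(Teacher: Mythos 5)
Your argument is correct: at each differentiability point $x_k$ the tangent cone to $\gph F$ is exactly the subspace $\{(u,\nabla F(x_k)u)\mv u\in\R^n\}$, its polar gives the regular normals $(\nabla F(x_k)^Ty^*,-y^*)$, and continuity of $F$ at $x$ lets you pass to the limit inside $\gph F$ to land in $N_{\gph F}\big(x,F(x)\big)$, which is precisely the coderivative statement. The paper itself offers no proof here (it cites \cite[Proposition 2.4]{GfrOut22a}), and your argument is the standard one behind that cited result, so there is nothing further to reconcile.
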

If the mapping $F:U\to\R^m$ is Lipschitz continuous, then by Rademacher's Theorem $F$ is differentiable almost everywhere in $U$ and $\norm{\nabla F(x)}$  is bounded there by the Lipschitz constant of $F$. Thus $\overline{\nabla} F(\xb)\not=\emptyset$ for Lipschitz continuous mappings $F$.

Let us now recall the following regularity notions.
\begin{definition}
  Consider a mapping $F:\R^n\tto\R^m$ and let $(\xb,\yb)\in\gph F$. Then
  \begin{enumerate}
    \item $F$ is said to be {\em metrically subregular at} $(\xb,\yb)$ if there exists $\kappa\geq 0$  along with some  neighborhood $U$ of $\xb$ such that
    \begin{equation}\label{EqSubreg}
      \dist{x,F^{-1}(\yb)}\leq \kappa\dist{\yb,F(x)}\ \forall x\in U.
    \end{equation}
  \item $F$ is said to be {\em strongly metrically subregular at} $(\xb,\yb)$ if there is $\kappa\geq 0$ together with some neighborhood $U$ of $\xb$ such that
\begin{equation}\label{EqStrongSubreg}
      \norm{x-\xb}\leq \kappa\dist{\yb,F(x)}\ \forall x\in U.
    \end{equation}
  \item $F$ is said to be {\em metrically regular around} $(\xb,\yb)$ if there is $\kappa\geq 0$ together with neighborhoods $U$ of $\xb$ and $V$ of $\yb$ such that
      \begin{equation}\label{EqMetrReg}
      \dist{x,F^{-1}(y)}\leq \kappa\dist{y,F(x)}\ \forall (x,y)\in U\times V.
    \end{equation}
  \end{enumerate}
\end{definition}
Note that conditions \eqref{EqStrongSubreg} implies that $F^{-1}(\yb)\cap U=\{\xb\}$.

Related with these regularity properties are the following Lipschitzian properties.
\begin{definition}
  Let $S:\R^m\tto\R^n$ be a mapping and let $(\yb,\xb)\in\gph S$. Then
  \begin{enumerate}
    \item $S$ is said to be {\em calm} at $(\yb,\xb)$ if there exists $\kappa\geq0$ along with a neighborhood $U$ of $\xb$ such that
    \[S(y)\cap U\subset S(\xb)+\kappa\norm{y-\yb}\B_{\R^n}\ \forall y\in\R^m.\]
    \item $S$ has the {\em isolated calmness} property at $(\yb,\xb)$ if
    there exists $\kappa\geq 0$ along with a neighborhood $U$ of $\xb$ such that
    \begin{equation}\label{EqIsolCalm}S(y)\cap U\subset \{\xb\}+\kappa\norm{y-\yb}\B_{\R^n}\ \forall y\in\R^m.
    \end{equation}
    \item $S$ has the {\em Aubin} property around $(\yb,\xb)$ if there is some constant $\kappa\geq0$ along with neighborhoods $V$ of $\yb$ and $U$ of $\xb$ such that
    \[S(y)\cap U\subset S(y')+\kappa\norm{y-y'}\B_{\R^n}\ \forall y,y'\in V.\]
  \end{enumerate}
\end{definition}
Now the condition \eqref{EqIsolCalm} defining isolated calmness ensures that $S(\yb)\cap U=\{\xb\}$.

It is well-known, see, e.g., \cite{DoRo14}, that the property of (strong) metric subregularity for $F$ at $(\xb,\yb)$ with constant $\kappa$ is equivalent with the property of {\em (isolated) calmness}  for $F^{-1}$ at $(\yb,\xb)$ with constant $\kappa$.
Further, $F$ is metrically regular around $(\xb,\yb)$ with constant $\kappa$ if and only if the inverse mapping $F^{-1}$ has the so-called {\em Aubin property} around $(\yb,\xb)$ with constant $\kappa$.

The properties of  metric regularity and strong metric subregularity are stable under Lipschitzian and calm  perturbations, respectively, cf. \cite{DoRo14}. Further note that the property of  metric regularity holds around all points belonging to the graph of $F$ sufficiently close to the reference point, whereas the property of (strong) metric subregularity is guaranteed to hold only at the reference point. This leads to the following definition.
\begin{definition}\label{DefLocStrSubReg}
\begin{enumerate}
\item
  We say that the mapping $F:\R^n\tto\R^m$ is {\em (strongly) metrically subregular around} $(\xb,\yb)\in\gph F$ if there is $\kappa\geq0$ and a neighborhood $W$ of $(\xb,\yb)$ such that $F$ is (strongly) metrically subregular with constant $\kappa$ at every point $(x,y)\in\gph F\cap W$.\\
  In this case we will also speak about {\em (strong) metric subregularity on a neighborhood}.
\item We say that the mapping $S:\R^m\tto\R^n$ is called {\em (isolatedly) calm around} $(\yb,\xb)\in\gph S$ if there is some constant $\kappa\geq 0$ along with some neighborhood $W$ of $(\yb,\xb)$ such that $S$ is isolatedly calm with constant $\kappa$ at every point $(y,x)\in \gph S\cap W$.\\
    In this case we will also speak about {\em (isolated) calmness on a neighborhood}.
\end{enumerate}
\end{definition}
The notion of (strong) metric subregularity on a neighborhood was introduced in \cite[Definition 2.8]{GfrOut22a}.
Due to the relation between (strong) metric subregularity of $F$ and (isolated) calmness of $F^{-1}$ we immediately obtain the following result.
\begin{lemma}
  Let $F:\R^n\tto\R^m$ be a mapping and let $(\xb,\yb)\in\gph F$. Then $F$ is (strongly) metrically subregular around $(\xb,\yb)$ if and only if $F^{-1}$ is isolatedly calm around $(\yb,\xb)$.
\end{lemma}
Note that every polyhedral multifunction, i.e., a mapping whose graph is the union of finitely many convex polyhedral sets, is both metrically subregular and calm around every point of its graph by Robinson's result \cite{Ro81}. In this paper, we will restrict our investigations to the properties of strong metric subregularity and isolated calmness on a neighborhood. Let us first have a closer look on Definition \ref{DefLocStrSubReg}.

The mapping $F:\R^n\tto\R^m$ is strongly metrically subregular around $(\xb,\yb)\in\gph F$ if and only if there is some $\kappa\geq0$ together with some neighborhood $W$ of $(\xb,\yb)$ such that for every $(x,y)\in\gph F\cap W$ there is some neighborhood $U_{xy}$ of $x$ with
\[\dist{x',F^{-1}(y)}\leq \kappa \dist{y, F(x')}\ \forall x'\in U_{xy}.\]
Note that the neighborhoods $U_{xy}$ depends both on $x$ amd $y$ and can be arbitrarily small.

Similarly, the mapping $S:\R^m\tto\R^n$ is isolatedly calm around $(\yb,\xb)\in\gph S$ if and only if there is some $\kappa\geq 0$  together with some neighborhood $W$ of $(\yb,\xb)$ such that for every $(y,x)\in\gph S\cap W$ there is some neighborhood $U_{yx}$ of $x$ with
\[S(y')\cap U_{yx}\subset \{x\}+\kappa \norm{y'-y}\B_{\R^n}\ \forall y'\in\R^m.\]

In this paper we will use the  following point-based characterizations of the above regularity properties.
\begin{theorem}\label{ThCharRegByDer}
    Consider a mapping $F:\R^n\tto\R^m$ and let $(\xb,\yb)\in\gph F$. Then
    \begin{enumerate}
    \item[(i)] (Levy-Rockafellar criterion, see, e.g., \cite[Theorem 4.1]{DoRo14}) $F$ is strongly metrically subregular at $(\xb,\yb)$ if and only if
      \begin{equation}\label{EqLevRoCrit}
        0\in DF(\xb,\yb)(u)\ \Rightarrow u=0.
      \end{equation}
    \item[(ii)] (Mordukhovich criterion, see, e.g., \cite[Theorem 3.3]{Mo18}) $F$ is metrically regular around $(\xb,\yb)$ if and only if
      \begin{equation}
            \label{EqMoCrit} 0\in D^*F(\xb,\yb)(y^*)\ \Rightarrow\ y^*=0.
      \end{equation}
      \item[(iii)]$F$ is strongly metrically subregular around $(\xb,\yb)$ if and only if
        \begin{equation}\label{EqLocStrSubregCrit}
          0\in D^\sharp F(\xb,\yb)(u)\ \Rightarrow u=0.
        \end{equation}
    \end{enumerate}
\end{theorem}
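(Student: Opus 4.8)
Parts (i) and (ii) are the classical Levy--Rockafellar and Mordukhovich criteria quoted above, so I would only prove part (iii). Write $A:=\gph F$, so that $\gph D^\sharp F(\xb,\yb)=T^\sharp_A(\xb,\yb)=\Limsup_{(x,y)\setto{{A}}(\xb,\yb)}T_A(x,y)$ and condition \eqref{EqLocStrSubregCrit} reads: $(u,0)\in T^\sharp_A(\xb,\yb)\Rightarrow u=0$. The plan is to prove the two implications separately, arguing directly from Definition \ref{DefLocStrSubReg}(1) and from the definitions of $T_A$ and $T^\sharp_A$; the cited criterion (i) is used only implicitly.

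For necessity, fix $\kappa$ and a neighborhood $W$ of $(\xb,\yb)$ witnessing that $F$ is strongly metrically subregular around $(\xb,\yb)$, take any $(u,0)\in T^\sharp_A(\xb,\yb)$, and pick $(x_k,y_k)\setto{{A}}(\xb,\yb)$ together with $(u_k,v_k)\to(u,0)$, $(u_k,v_k)\in T_A(x_k,y_k)$. For $k$ large one has $(x_k,y_k)\in W$; unravelling $(u_k,v_k)\in T_A(x_k,y_k)$ yields $t_j\downarrow0$ and $(u_{k,j},v_{k,j})\to(u_k,v_k)$ with $y_k+t_jv_{k,j}\in F(x_k+t_ju_{k,j})$, whence $\dist{y_k,F(x_k+t_ju_{k,j})}\le t_j\norm{v_{k,j}}$. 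Since $x_k+t_ju_{k,j}\to x_k$, estimate \eqref{EqStrongSubreg} at $(x_k,y_k)$ applies for $j$ large and gives $t_j\norm{u_{k,j}}\le\kappa t_j\norm{v_{k,j}}$; dividing by $t_j$ and letting $j\to\infty$ produces $\norm{u_k}\le\kappa\norm{v_k}$, and then $k\to\infty$ forces $\norm{u}\le\kappa\cdot0=0$, i.e. $u=0$.

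For sufficiency I would argue by contradiction: assume \eqref{EqLocStrSubregCrit} but that $F$ is not strongly metrically subregular around $(\xb,\yb)$. Negating Definition \ref{DefLocStrSubReg}(1), for each $k\in\N$ there is $(x_k,y_k)\in\gph F$ with $\norm{(x_k,y_k)-(\xb,\yb)}<1/k$ at which strong metric subregularity fails with constant $k$; hence for each $j\in\N$ there is $x_{k,j}'$ with $0<\norm{x_{k,j}'-x_k}<1/j$ and $\norm{x_{k,j}'-x_k}>k\,\dist{y_k,F(x_{k,j}')}$, so that $\dist{y_k,F(x_{k,j}')}<1/(kj)$. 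Because these distances are small and $\gph F$ is locally closed, I may pick $y_{k,j}'\in F(x_{k,j}')$ with $\norm{y_{k,j}'-y_k}=\dist{y_k,F(x_{k,j}')}$. Set $s_{k,j}:=\norm{x_{k,j}'-x_k}$ and $(a_{k,j},b_{k,j}):=s_{k,j}^{-1}\big((x_{k,j}',y_{k,j}')-(x_k,y_k)\big)$, so $\norm{a_{k,j}}=1$ and $\norm{b_{k,j}}<1/k$. As $s_{k,j}\to0$ when $j\to\infty$, passing to a subsequence gives $s_{k,j}\downarrow0$ and $(a_{k,j},b_{k,j})\to(a_k,b_k)$ with $\norm{a_k}=1$, $\norm{b_k}\le1/k$; since $(x_k,y_k)+s_{k,j}(a_{k,j},b_{k,j})\in\gph F$, this means $(a_k,b_k)\in T_A(x_k,y_k)$. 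Now $(x_k,y_k)\setto{{A}}(\xb,\yb)$ and, along a further subsequence, $a_k\to a$ with $\norm{a}=1$ and $b_k\to0$, so $(a,0)\in T^\sharp_A(\xb,\yb)$ with $a\ne0$, contradicting \eqref{EqLocStrSubregCrit}.

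I expect the delicate point to be the sufficiency direction. Failure of strong metric subregularity \emph{with the particular constant $k$} at $(x_k,y_k)$ only supplies individual secants to $\gph F$ issued from $(x_k,y_k)$, not tangent vectors; the device is to pass to the limit in those secants \emph{at the fixed base point} $(x_k,y_k)$ (the index $j$), which at once produces a genuine element of $T_A(x_k,y_k)$ and preserves the near-horizontality $\norm{b_{k,j}}<1/k$. Only then does the outer limit over $k$ deliver a nonzero vector $(a,0)$ in $T^\sharp_A(\xb,\yb)$. A naive one-step secant construction would instead land an element $(a,0)$ in the larger paratingent cone $T^P_A(\xb,\yb)\supseteq T^\sharp_A(\xb,\yb)$, which would fail to contradict \eqref{EqLocStrSubregCrit}; getting the order of the two limits right is the crux.
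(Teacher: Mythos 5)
Your argument for part (iii) is correct, and it is worth noting that the paper itself does not prove this statement at all: it simply cites \cite[Theorem 6.1]{GfrOut22a} (where the case $m=n$ is treated) and remarks that the proof there carries over verbatim to rectangular dimensions. You instead supply a self-contained proof, and both delicate points are handled properly. In the necessity direction you correctly exploit the \emph{uniform} constant $\kappa$ valid at all graph points in $W$, passing first to the inner limit $j\to\infty$ at the fixed base point $(x_k,y_k)$ to get $\norm{u_k}\leq\kappa\norm{v_k}$ and only then letting $k\to\infty$. In the sufficiency direction the negation of Definition \ref{DefLocStrSubReg}(1) is taken correctly (constant $k$ blowing up together with the shrinking ball $\B_{1/k}(\xb,\yb)$), and the two-stage limit — secants at the fixed base point $(x_k,y_k)$ first, producing a genuine element $(a_k,b_k)\in T_{\gph F}(x_k,y_k)$ with $\norm{b_k}\leq 1/k$, then the outer limit over $k$ — is exactly what places $(a,0)$, $\norm{a}=1$, in $T^\sharp_{\gph F}(\xb,\yb)$ rather than merely in the paratingent cone; your closing remark on this is on target. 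The only cosmetic point is the exact attainment of $\dist{y_k,F(x'_{k,j})}$: strictly one should either note that for $k,j$ large all near-minimizers lie in the ball where $\gph F$ is closed (as your parenthetical indicates), or simply take $y'_{k,j}$ with $\norm{y'_{k,j}-y_k}\leq 2\dist{y_k,F(x'_{k,j})}$, which changes nothing in the estimates. Parts (i) and (ii) are, as in the paper, correctly left to the cited references.
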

The characterization (iii) of strong metric subregularity on a neighborhood was shown in \cite[Theorem 6.1]{GfrOut22a} for the special case $m=n$. But a close inspection of the proof of
\cite[Theorem 6.1]{GfrOut22a} shows that it can be used without any modification to show the general case as well.

Next we  introduce the \ssstar sets and mappings.

\begin{definition}\label{DefSemiSmooth}
\begin{enumerate}
\item  A set $A\subseteq\R^s$ is called {\em \ssstar} at a point $\xb\in A$ if for every $\epsilon>0$ there is some $\delta>0$ such that
\[\vert \skalp{x^*,x-\xb}\vert\leq \epsilon\norm{x-\xb}\norm{x^*}\]
holds for all $x\in A\cap\B_\delta(\xb)$ and all $x^*\in \widehat N_A(x)$.
\item A set-valued mapping $F:\R^n\tto\R^m$ is called {\em \ssstar} at a point $(\xb,\yb)\in\gph F$, if
$\gph F$ is \ssstar at $(\xb,\yb)$, i.e., for every $\epsilon>0$ there is some $\delta>0$ such that
\[\vert \skalp{x^*,x-\xb}-\skalp{y^*,y-\yb}\vert\leq \epsilon\norm{(x,y)-(\xb,\yb)}\norm{(x^*,y^*)}\]
holds for all $(x,y)\in\gph F\cap\B_\delta(\xb,\yb)$ and all $(y^*,x^*)\in \gph\widehat D^*F(x,y)$.
\end{enumerate}
\end{definition}
Note that the above definitions of \ssstar sets and multifunctions are not the same as the ones introduced in \cite{GfrOut21}, but by \cite[Proposition 3.2, Corollary 3.3]{GfrOut21} they are equivalent.

The class of semismooth* sets and mappings is rather broad.
\begin{proposition}  \label{PropSSstar}
\begin{enumerate}
  \item Any closed convex set $A \subset \mathbb{R}^{s}$  is \ssstar at each $\bar{x} \in A$.
  \item Assume that we are given closed sets $A_i\subset\R^s$, $i=1,\ldots p$, and $\xb\in
  A:=\bigcup_{i=1}^p A_i$. If the sets $A_i$, $i\in \bar I:=\{j\mv\xb\in A_j\}$, are \ssstar at
  $\xb$, then so is the set $A$.
  \item Every closed subanalytic set $A$ is \ssstar at each $\xb\in A$.
\end{enumerate}
\end{proposition}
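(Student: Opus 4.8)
For convex $A$ one has $\widehat N_A=N_A$, and this mapping is outer semicontinuous. Fix $\epsilon>0$, let $x\in A$ be close to $\xb$ and let $0\neq x^*\in\widehat N_A(x)$. Testing the defining inequality of $N_A(x)$ with $y=\xb\in A$ gives $\skalp{x^*,x-\xb}\ge0$. On the other hand $\skalp{w,x-\xb}\le0$ for every $w\in N_A(\xb)$ (as $x\in A$), so, taking $w=P_{N_A(\xb)}(x^*)$,
\[\skalp{x^*,x-\xb}=\skalp{x^*-w,x-\xb}+\skalp{w,x-\xb}\le\norm{x^*-w}\,\norm{x-\xb}=\dist{x^*,N_A(\xb)}\,\norm{x-\xb}.\]
By outer semicontinuity of $N_A$ and conic homogeneity there is $\delta>0$ such that $\dist{x^*,N_A(\xb)}\le\epsilon\norm{x^*}$ whenever $x\in A\cap\B_\delta(\xb)$ and $x^*\in\widehat N_A(x)$; together with the lower bound this is the \ssstar inequality.

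\textbf{Part (2).} Let $\bar I=\{i\mv\xb\in A_i\}$ and $\delta_0:=\min\{\dist{\xb,A_i}\mv i\notin\bar I\}>0$, the $A_i$ being closed and finite in number. For $\delta\le\delta_0$, any $x\in A\cap\B_\delta(\xb)$ lies in some $A_i$ with $i\in\bar I$, and since $A_i\subseteq A$ yields $T_{A_i}(x)\subseteq T_A(x)$ and hence $\widehat N_A(x)=(T_A(x))^\circ\subseteq(T_{A_i}(x))^\circ=\widehat N_{A_i}(x)$, every $x^*\in\widehat N_A(x)$ is a regular normal to $A_i$ at $x$. Given $\epsilon>0$, pick for each $i\in\bar I$ a radius $\delta_i>0$ witnessing that $A_i$ is \ssstar at $\xb$, and take $\delta:=\min(\delta_0,\min_{i\in\bar I}\delta_i)$; the \ssstar estimate for the pertinent $A_i$ then does the job.

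\textbf{Part (3).} Here the plan is to invoke the curve selection lemma for subanalytic sets. One first records that for a closed subanalytic set $A$ the graphs of $T_A(\cdot)$ and of $\widehat N_A(\cdot)=(T_A(\cdot))^\circ$ are again subanalytic, so that $\Gamma:=\{(x,x^*)\mv x\in A,\ x^*\in\widehat N_A(x),\ \norm{x^*}=1\}$ is subanalytic and $\phi(x,x^*):=\vert\skalp{x^*,x-\xb}\vert/\norm{x-\xb}$ is subanalytic on $\{(x,x^*)\in\Gamma\mv x\neq\xb\}$. If $A$ were not \ssstar at $\xb$ there would be $c>0$ and points of $\Gamma$ with $\phi\ge c$ and $x$ arbitrarily close to $\xb$; compactness of the unit sphere then places an accumulation point $(\xb,x^*_0)$ with $\norm{x^*_0}=1$ in the closure of the subanalytic set $\{(x,x^*)\in\Gamma\mv x\neq\xb,\ \phi(x,x^*)\ge c\}$, and the curve selection lemma produces a real-analytic curve $t\mapsto(x(t),x^*(t))$ on $(0,\varepsilon)$ in that set with $x(t)\to\xb$ and $x^*(t)\to x^*_0$. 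A Puiseux expansion gives $x(t)=\xb+t^\gamma(a+\oo(1))$ and $\dot x(t)=\gamma t^{\gamma-1}(a+\oo(1))$ with $a\neq0$, $\gamma>0$. Since $x(s)\in A$ for $s$ in a full neighbourhood of $t$, both $\dot x(t)$ and $-\dot x(t)$ lie in $T_A(x(t))$, so $\skalp{x^*(t),\dot x(t)}=0$; dividing by $\gamma t^{\gamma-1}$ and letting $t\downarrow0$ gives $\skalp{x^*_0,a}=0$. Therefore $\skalp{x^*(t),x(t)-\xb}=t^\gamma\skalp{x^*_0,a}+\skalp{x^*(t)-x^*_0,x(t)-\xb}+\oo(t^\gamma)=\oo(\norm{x(t)-\xb})$, since $x^*(t)\to x^*_0$, which contradicts $\phi(x(t),x^*(t))\ge c$.

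The main obstacle is entirely in Part (3): one must verify that every auxiliary object ($\gph T_A$, $\gph\widehat N_A$, $\Gamma$, and the superlevel sets of $\phi$) is genuinely subanalytic, so that the curve selection lemma legitimately applies, and one must carry the asymptotic bookkeeping for $\skalp{x^*(t),x(t)-\xb}$ to the correct order — in particular that $\dot x(t)\to a$ and $x^*(t)-x^*_0=\oo(1)$ along the analytic curve. Parts (1) and (2) are routine; items (1) and (3) could alternatively be quoted directly from \cite{GfrOut21}.
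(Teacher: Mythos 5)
The paper does not actually prove this proposition: parts (1)--(2) are quoted from \cite[Propositions 3.4, 3.5]{GfrOut21} and part (3) from \cite[Theorem 2]{Jou07}, so your proposal should be judged as a self-contained substitute. Parts (1) and (2) are correct and essentially the standard arguments. In (1), the graph of $N_A$ is closed for closed convex $A$, so the compactness argument on unit normals indeed yields $\dist{x^*,N_A(\xb)}\leq\epsilon\norm{x^*}$ for $x\in A\cap\B_\delta(\xb)$, and combined with $0\leq\skalp{x^*,x-\xb}\leq\dist{x^*,N_A(\xb)}\norm{x-\xb}$ this gives the \ssstar estimate. In (2), the inclusion $\widehat N_A(x)\subset\widehat N_{A_i}(x)$ for $x\in A_i\subset A$ is the right observation; the only (trivial) slip is that with closed balls you must take $\delta<\delta_0$ (and handle the case $\bar I=\{1,\dots,p\}$, where $\delta_0$ is undefined), since a point with $\norm{x-\xb}=\delta_0$ may lie only in some $A_i$ with $i\notin\bar I$.

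Part (3) is where the proposal falls short of a proof. The curve-theoretic core is fine: along an analytic arc $t\mapsto\big(x(t),x^*(t)\big)$ lying in the bad set, $\pm\dot x(t)\in T_A\big(x(t)\big)$ forces $\skalp{x^*(t),\dot x(t)}=0$, the leading-order analysis gives $\skalp{x^*_0,a}=0$, and hence $\vert\skalp{x^*(t),x(t)-\xb}\vert=\oo(\norm{x(t)-\xb})$, contradicting $\phi\geq c$ on the arc (the normalization $\norm{x^*}=1$ is legitimate by homogeneity). What is missing is precisely the step you flag but do not carry out: that $\Gamma=\{(x,x^*)\mv x\in A,\ x^*\in\widehat N_A(x),\ \norm{x^*}=1\}$ and its superlevel set $\{\phi\geq c\}$ are subanalytic near $(\xb,x^*_0)$, so that the curve selection lemma applies. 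This is not automatic -- $\widehat N_A$ is defined through a $\limsup$ over $y\setto{A}x$, and subanalytic sets are stable only under proper projections, so one must pass through the definability machinery of globally subanalytic (o-minimal) sets, with appropriate localization and boundedness, to establish it. That definability argument is exactly the mathematical content supplied by the cited result of Jourani (or by the related work of Bolte--Daniilidis--Lewis on tame semismoothness), so as written your part (3) is a correct strategy with the decisive lemma assumed rather than proved; either prove that subanalyticity claim or, as the paper does, cite \cite[Theorem 2]{Jou07}.
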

The first two statements of this proposition can be found in \cite[Propsition 3.4, Proposition 3.5]{GfrOut21}, whereas the last statement follows from \cite[Theorem 2]{Jou07}.

We now state a sufficient condition for the \ssstar property of sets with constraint structure.
\begin{proposition}\label{PropSSstarNew}Let $A=\{x\in\R^s\mv \Phi(x)\in D\}$, where $\Phi:\R^s\to\R^p$ is continuously differentiable and $D\subset \R^p$ is a closed set. Given $\xb\in A$, assume that the mapping $x\mapsto F(x):=\Phi(x)-D$ is metrically subregular at $(\xb,0)$ and assume that $D$ is \ssstar at $\Phi(\xb)$. Then $A$ is \ssstar at $\xb$.
\end{proposition}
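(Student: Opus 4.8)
The plan is to reduce the claimed semismooth$^*$ property of $A$ to the known semismooth$^*$ property of $D$ at $\Phi(\xb)$ by transporting regular normals of $A$ through the derivative $\nabla\Phi(\xb)$, using metric subregularity of $F$ to control the error terms. The key analytic device is the estimate of regular normals to a constraint set under a calm (here, metrically subregular) constraint map. Concretely, if $x\in A$ is near $\xb$ and $x^*\in\widehat N_A(x)$, I would first show that $x^*$ admits (up to a small perturbation that vanishes as $x\to\xb$) a representation $x^*\approx \nabla\Phi(x)^T d^*$ with $d^*\in \widehat N_D(\Phi(x))$ and $\norm{d^*}$ controlled by $\norm{x^*}$ times the subregularity modulus. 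The metric subregularity of $F$ at $(\xb,0)$ is exactly what yields such a bounded multiplier $d^*$: it gives a local error bound $d_{A}(z)=d_{F^{-1}(0)}(z)\le\kappa\, d_D(\Phi(z))$ near $\xb$, and by a standard argument (e.g. the Ioffe--Outrata / Henrion--Jourani--Outrata type bound, or directly via the distance-function characterization of $\widehat N_A$) this forces every regular normal of $A$ at $x$ to be of the above multiplier form with $\norm{d^*}\le \kappa'\norm{x^*}+o(\norm{x^*})$ as $x\to\xb$.

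Granting that representation, the main computation is straightforward. Fix $\epsilon>0$. Using the semismooth$^*$ property of $D$ at $\bar d:=\Phi(\xb)$, pick $\eta>0$ so that $\vert\skalp{d^*,d-\bar d}\vert\le \epsilon'\norm{d-\bar d}\norm{d^*}$ for all $d\in D\cap\B_\eta(\bar d)$ and $d^*\in\widehat N_D(d)$, where $\epsilon'$ will be chosen small relative to $\epsilon$. For $x\in A$ near $\xb$ write, using continuous differentiability of $\Phi$,
\[
\Phi(x)-\Phi(\xb)=\nabla\Phi(\xb)(x-\xb)+r(x),\qquad \norm{r(x)}\le \omega(\norm{x-\xb})\norm{x-\xb},
\]
with $\omega(t)\downarrow0$ as $t\downarrow0$; I would also replace $\nabla\Phi(x)$ by $\nabla\Phi(\xb)$ in the multiplier representation, again at the cost of an $o(1)$ error. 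Then
\[
\skalp{x^*,x-\xb}=\skalp{\nabla\Phi(\xb)^Td^*,x-\xb}+e_1=\skalp{d^*,\Phi(x)-\Phi(\xb)}-\skalp{d^*,r(x)}+e_1,
\]
where $e_1$ collects the perturbation errors and satisfies $\vert e_1\vert\le o(1)\norm{x-\xb}\norm{x^*}$. The term $\skalp{d^*,\Phi(x)-\Phi(\xb)}$ is bounded by $\epsilon'\norm{\Phi(x)-\Phi(\xb)}\norm{d^*}\le \epsilon' C\norm{x-\xb}\cdot\kappa'\norm{x^*}$ for $x$ close enough to $\xb$ (here $C$ bounds $\norm{\nabla\Phi}$ near $\xb$), and $\vert\skalp{d^*,r(x)}\vert\le \kappa'\norm{x^*}\,\omega(\norm{x-\xb})\norm{x-\xb}$. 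Collecting everything and shrinking the neighborhood of $\xb$ so that all the $o(1)$ and $\omega$ terms, together with $\epsilon' C\kappa'$, are below $\epsilon$, yields $\vert\skalp{x^*,x-\xb}\vert\le\epsilon\norm{x-\xb}\norm{x^*}$, which is the definition of $A$ being semismooth$^*$ at $\xb$.

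The main obstacle is the first step: rigorously producing the bounded multiplier $d^*\in\widehat N_D(\Phi(x))$ representing $x^*$ with the uniform bound $\norm{d^*}\le\kappa'\norm{x^*}$ valid for all $x$ in a fixed neighborhood of $\xb$ (not merely at $\xb$ itself). Metric subregularity of $F$ at $(\xb,0)$ is a pointwise hypothesis, so one must be careful that it transfers to a neighborhood with a uniform modulus; the standard route is that metric subregularity at $(\xb,0)$ implies a local error bound $d_A(z)\le\kappa\, d_D(\Phi(z))$ on a whole neighborhood $U$ of $\xb$, and then for any $x\in A\cap U$ and $x^*\in\widehat N_A(x)$ one uses that $\widehat N_A(x)\subseteq \R_+\,\partial d_A(x)$ (for $x$ on the boundary) together with the chain/sum rule for the subdifferential of $z\mapsto \kappa\, d_D(\Phi(z))$ to land the normal inside $\nabla\Phi(x)^T\widehat N_D(\Phi(x))$ with the quantitative bound. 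I would cite the relevant result on normals to metrically subregular constraint systems for this step; everything after it is the routine estimate sketched above.
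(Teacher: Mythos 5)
Your proposal is correct and follows essentially the same route as the paper: a bounded multiplier representation of normals to $A$ obtained from metric subregularity (the paper gets this exactly, not up to $o(\norm{x^*})$ slack, by citing \cite[Theorem 3]{GfrYe17}, which yields $N_A(x)\subset\{\nabla\Phi(x)^Ty^*\mv y^*\in N_D(\Phi(x)),\ \norm{y^*}\leq\kappa\norm{x^*}\}$ for all $x\in A$ in a fixed neighborhood of $\xb$ where the error bound holds), followed by the semismooth$^*$ estimate for $D$ and a Taylor expansion. The only minor differences are that the paper keeps $\nabla\Phi(x)$ in the representation and expands $\Phi$ around $x$, so the sole error term is the Taylor remainder (no Jacobian-replacement term), and it invokes \cite[Proposition 3.2]{GfrOut21} so that the semismooth$^*$ inequality for $D$ may be applied to limiting normals $y^*\in N_D(\Phi(x))$ rather than only regular ones, which is what the cited multiplier rule actually delivers.
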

\begin{proof}
  By metric subregularity of $F$ there exists a real $\kappa> 0$ together with some open neighborhood $U$ such that \eqref{EqSubreg} holds. It follows that for  every $x\in A\cap U$ the mapping $F$ is metrically subregular with constant $\kappa$ at $(x,0)$ and thus, by \cite[Theorem 3]{GfrYe17} there holds
  \[N_A(x)\subset\{\nabla \Phi(x)^Ty^*\mv y^*\in N_D\big(\Phi(x)\big)\cap\kappa\norm{x^*}\B_{R^p}\},\ x\in A\cap U. \]
 Since $D$ is \ssstar at $\Phi(\xb)$, by \cite[Proposition 3.2]{GfrOut21} there is some radius $\rho>0$ such that
  \[\vert \skalp{y^*,d-\Phi(\xb)}\vert\leq \frac{\epsilon}{2L\kappa}\norm{d-\Phi(\xb)}\norm{y^*}\ \forall d\in D\cap \B_\rho\big(\Phi(\xb)\big)\ \forall y^*\in N_D(d),\]
  where $L$ denotes the Lipschitz constant of $\Phi$ on some ball $B_\delta(\xb)\subset U$. Next choose $0<\bar\delta<\min\{\delta,\rho/L\}$ such that
  \[\norm{\Phi(\xb)-\Phi(x)-\nabla\Phi(x)(\xb-x)}\leq \frac\epsilon{2\kappa}\norm{x-\xb},\ x\in\B_{\bar\delta}(\xb),\]
   and consider $x\in A\cap \B_{\bar\delta}(\xb)$ and $x^*\in N_A(x)$ together with $y^*\in N_D\big(\Phi(x)\big)$ satisfying $\norm{y^*}\leq\kappa\norm{x^*}$ and $x^*=\nabla\Phi(x)^Ty^*$.
   Then
   \begin{align*}
     \vert \skalp{x^*,x-\xb}\vert&=\vert\skalp{y^*,\nabla\Phi(x)(x-\xb)}\vert\\
     &\leq \vert\skalp{y^*,\Phi(x)-\Phi(\xb)}\vert+\vert\skalp{y^*,\Phi(\xb)-\Phi(x)-\nabla\Phi(x)(x-\xb)}\vert\\
     &\leq \frac{\epsilon}{2L\kappa}\norm{\Phi(x)-\Phi(\xb)}\norm{y^*}+\norm{y^*}\norm{\Phi(\xb)-\Phi(x)-\nabla\Phi(x)(\xb-x)}\\
     &\leq \frac{\epsilon}{2L\kappa}L\norm{x-\xb}\kappa\norm{x^*}+\frac \epsilon{2\kappa}\kappa\norm{x^*}\norm{x-\xb}=\epsilon\norm{x-\xb}\norm{x^*},
   \end{align*}
   verifying that $A$ is \ssstar at $\xb$.
\end{proof}

In case of single-valued Lipschitzian mappings the \ssstar property is equivalent with the semismooth property introduced by Gowda \cite{Gow04}, which is weaker than the one in \cite{QiSun93}.

\section{Preliminaries}
This section is composed from two parts. The first one, Section 3.1, contains a generalization of the basic facts about the SCD mappings from \cite[Section 3]{GfrOut22a} to multifunctions between different finite-dimensional spaces. Section 3.2 is then devoted to the important notion of SCD regularity, playing a crucial role in the subsequent development.

\subsection{SCD mappings}
Let us denote by $\Z_{nm}$ the metric space of all n-dimensional subspaces of $\mathbb{R}^{n+m}$ equipped with the metric
\begin{equation}\label{eq-30}
d_{\Z_{nm}}(L_{1},L_{2}):=\|P_{1}-P_{2}\|,
\end{equation}
where $P_{i}$ is the symmetric $(n+m) \times (n+m)$ matrix representing the orthogonal projection onto $L_{i}, i=1,2$.
\if{Given a subspace $L \in \Z_{nm}$, we denote by $\mathcal{M}_{nm}$ the collection of all $(n+m)\times n$ matrices $Z$ having the full rank $n$, and put
\[
\begin{split}
& \mathcal{M}(L):= \{Z \in \mathcal{M}_{nm} \mv \rge Z = L\},\\
& \mathcal{M}^{\orth}(L):=\{Z \in \mathcal{M}(L)\mv Z^{T}Z = I\}.
\end{split}
\]
It follows that if $Z \in \mathcal{M}^{\orth}(L)$, then the columns of $Z$ create an orthogonal basis for $L$. Similarly as in the case $m=n$ the orthogonal projection onto $L$ admits the representations
\begin{equation}\label{eq-31}
P=Z(Z^{T}Z)^{-1} Z^{T} \mbox{ for } Z \in \mathcal{M}(L)
\end{equation}
and
\begin{equation}\label{eq-32}
P=ZZ^{T} \mbox{ for } Z \in \mathcal{M}^{\orth}(L).
\end{equation}
Of course, $L$ can be represented as the   range space of a much larger class of matrices than $\mathcal{M}(L)$. They have more columns than $n$ but their column rank amounts to $n$.
}\fi
Throughout  the whole paper we make use of the following relationships.
\begin{lemma}\label{LemBasic}

\begin{enumerate}
\if{
 \item [(i)]
 Let $Z_{k}\in \mathcal{M}_{nm}$ be a sequence converging to some $Z \in \mathcal{M}_{nm}$. Then the subspaces $\rge Z_{k}$ converge in $\Z_{nm}$ to the subspace $\rge Z \in \Z_{nm}$.
 \item [(ii)]
 Let $L_{k}\in \Z_{nm}$ be a sequence converging to some $L \in \Z_{nm}$. Then there is a sequence $Z_{k}\in \red{\M(L_k)}$ 
  converging to some $Z \in \mathcal{M}(L)$.
  }\fi
 \item [(i)]
 Let $A_{k}$ be a sequence of $(n+m) \times (n+l)$ full-column-rank matrices converging to a full-column-rank matrix $A$ and let $L_{k}\in \Z_{nl}$ be a sequence of subspaces converging to $L \in \Z_{nl}$. Then $\lim\limits_{k \rightarrow \infty} d_{\Z_{nm}}(A_{k}L_{k}, AL)=0$.
 \item [(ii)]
 The metric space $\Z_{nm}$ is (sequentially) compact.
\end{enumerate}
\end{lemma}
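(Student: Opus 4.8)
For part (i), the plan is to work directly with the projection matrices. Write $P_k$ for the orthogonal projection onto $L_k$ and $P$ for the projection onto $L$; by hypothesis $\|P_k - P\| \to 0$. Using the representation of an orthogonal projection via any full-column-rank matrix spanning the subspace, namely $P = Z(Z^TZ)^{-1}Z^T$, I would express the projection onto $A_kL_k$ in terms of $A_k$ and a spanning matrix of $L_k$. A clean way is: pick $Z_k$ with $\rge Z_k = L_k$ and $Z_k^TZ_k = I$ (an orthonormal basis of $L_k$); then $A_kZ_k$ is a full-column-rank $(n+m)\times n$ matrix with range $A_kL_k$, so the projection onto $A_kL_k$ is $Q_k := A_kZ_k\big((A_kZ_k)^T(A_kZ_k)\big)^{-1}(A_kZ_k)^T = A_kZ_k(Z_k^TA_k^TA_kZ_k)^{-1}Z_k^TA_k^T$. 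The quantity $Z_kZ_k^T = P_k$ converges to $P$, and $A_k \to A$, so the only issue is to pass to the limit through the matrix inverse. Since the $Z_k$ themselves need not converge, I would instead massage the formula so that only $P_k = Z_kZ_k^T$, and $A_k$ appear — but the inverted factor $Z_k^TA_k^TA_kZ_k$ is $n\times n$ and does not obviously reduce to something involving only $P_k$. So the main obstacle is precisely this: the basis matrices $Z_k$ live on a compact set (columns orthonormal) so along a subsequence $Z_k \to Z$ with $Z^TZ=I$ and, by part (i) of the auxiliary facts on convergence of ranges (or directly, $ZZ^T = \lim P_k = P$), $\rge Z = L$; then $Q_k \to AZ(Z^TA^TAZ)^{-1}Z^TA^T$, which is the projection onto $AL$ since $A$ has full column rank on $L$ (so $AZ$ has full column rank). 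Because every subsequence has a further subsequence converging to the same limit $Q$ = projection onto $AL$, the whole sequence $Q_k$ converges to it, giving $d_{\Z_{nm}}(A_kL_k, AL) \to 0$. I should also check that $A_kL_k \in \Z_{nm}$, i.e. that $\dim A_kL_k = n$; this holds because $A_k$ restricted to the $n$-dimensional space $L_k$ is injective once $A_k$ has full column rank (rank $n+l \ge n$), and $A$ injective on $L$ likewise — here one uses that the null space of $A_k$ meets $L_k$ trivially, which may need the full-column-rank hypothesis on $A_k$ more carefully; I would verify that $\ker A_k = \{0\}$ since $A_k$ is $(n+m)\times(n+l)$ of full column rank $n+l$.

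For part (ii), compactness of $\Z_{nm}$, the plan is to identify $\Z_{nm}$ with a closed, bounded subset of the space of symmetric $(n+m)\times(n+m)$ matrices via the bijection $L \mapsto P_L$ (the orthogonal projector onto $L$), which is an isometry by the very definition \eqref{eq-30} of the metric. Boundedness is immediate since $\|P_L\| = 1$ for every $L$. For closedness (hence sequential compactness, since we are in finite dimensions and the image is closed and bounded), I would take a sequence $P_k$ of rank-$n$ orthogonal projectors converging to some symmetric matrix $P$; then $P = P^T$, $P^2 = \lim P_k^2 = \lim P_k = P$, so $P$ is an orthogonal projector, and $\range P$ has dimension $n$ because rank is lower semicontinuous (so $\dim \range P \le n$) while $\dim \range P = \mathrm{trace}\,P = \lim \mathrm{trace}\,P_k = n$. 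Hence $P = P_L$ for $L := \range P \in \Z_{nm}$, and the image is closed. Therefore $\Z_{nm}$ is isometric to a compact metric space, so it is itself sequentially compact. The main obstacle here is minor — just being careful that the dimension is preserved in the limit, which the trace argument settles cleanly.

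Overall the genuinely delicate point is part (i): the spanning matrices $Z_k$ do not converge, so one cannot simply "take limits in the formula", and the argument must route through compactness of the set of orthonormal-column matrices together with a subsequence-and-uniqueness-of-limit argument, using that $A$ is injective on $L$ to guarantee the limiting matrix $AZ$ still has full column rank so that its associated projector is well defined.
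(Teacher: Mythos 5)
Your proof is correct. The paper itself omits the argument, deferring to the counterparts in \cite[Lemma 3.1(iii),(iv)]{GfrOut22a}, and your route is essentially the same one those counterparts use: orthonormal spanning matrices $Z_k$ of $L_k$, compactness of the set of matrices with orthonormal columns, the projector formula $W(W^TW)^{-1}W^T$ applied to $W=A_kZ_k$ (with injectivity of $A$ on $L$ guaranteeing invertibility in the limit), and a subsequence-plus-uniqueness-of-limit argument; your identification in (ii) of $\Z_{nm}$ with the closed bounded set of symmetric idempotent matrices of trace $n$ is a harmless variant of lifting to orthonormal bases.
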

The above statements can be proved in the same way as  their counterparts in \cite[Lemma 3.1(iii),(iv)]{GfrOut22a} and therefore the proofs are omitted.

To be consistent with the  notation in \cite{GfrOut22a} we will write $\Z_{n}$ instead of $\Z_{nn}$.

 With  each $L \in \Z_{nm}$ one can associate its {\em adjoint} subspace $L^{*}$ defined by
  \begin{equation}\label{eq-31a}
L^{*}:=\{(-v^{*},u^{*}) \in \mathbb{R}^{m} \times \R^{n} \mv (u^{*},v^{*})\in L^{\perp}\}.
  \end{equation}
  Since $\dim L^{\perp}=m$, it follows that $L^{*}\in \Z_{mn}$ (i.e., its dimension is $m$). It is easy to see that
\begin{equation}\label{eq-34}
L^{*}= S_{nm} L^{\perp}, \mbox{ where } S_{nm} =
\left( \begin{array}{lc}
0 & -I_{m}\\
I_{n} & 0
\end{array}\right),
\end{equation}
yielding $(L^{*})^\perp=\{z\mv S_{nm}^Tz\in (L^\perp)^\perp=L\}$. Hence we obtain
\begin{align}\nonumber(L^*)^*&=\{(-u,v)\in\R^n\times\R^m\mv (v,u)\in (L^*)^\perp\}= \{(-u,v)\in\R^n\times\R^m\mv S_{nm}^T(v,u)=(u,-v)\in L\}\\
\label{EqIsometry}&=-L=L.
\end{align}
Further, if we denote by $P_{L^*}$, $P_{L^\perp}$ and $P_L$ the symmetric $(n+m)\times (n+m)$ matrices representing the orthogonal projections onto $L^*$, $L^\perp$ and $L$, respectively, then we have $P_{L^\perp}=I_{n+m}-P_L$ and, since $S_{nm}$ is orthogonal,
\[P_{L^*}=S_{nm}P_{L^\perp}S_{nm}^T=I_{n+m}-S_{nm}P_L S_{nm}^T.\]
We conclude that for any two subspaces $L_1,L_2\in \Z_{nm}$ there holds
\[d_{\Z_{mn}}(L_1^*,L_2^*)=\norm{I_{n+m}-S_{nm}P_{L_1} S_{nm}^T-(I_{n+m}-S_{nm}P_{L_2} S_{nm}^T)}=\norm{P_{L_1}-P_{L_2}}=d_{\Z_{n,m}}(L_1,L_2)\]
and thus the mapping $L\to L^*$ is an isometry between $\Z_{nm}$ and $\Z_{mn}$.
In what follows, the symbol $L^*$ signifies both the adjoint subspace to some $L\in \Z_{nm}$ as well as an arbitrary subspace from $\Z_{mn}$. This double role, however, cannot lead to a confusion.

Consider now a mapping $F: \mathbb{R}^{n}  \rightrightarrows \mathbb{R}^{m}$.

\begin{definition}
We say that $F$ is {\em graphically smooth of dimension} $n$ at $(\bar{x},\bar{y})$ if $T_{\gph F}(\bar{x},\bar{y})\in \Z_{nm}$. By $\mathcal{O}_{F}$
 we denote the subset of $\gph F$, where $F$ is graphically smooth of dimension $n$.\\
\end{definition}
Clearly, for $(x,y)\in \mathcal{O}_{F}$ and $L=T_{\gph F}(x,y)=\gph DF(x,y)$ it holds that
$L^{\perp}=\hat{N}_{\gph F}(x,y)$ and $L^{*}=\gph \widehat{D}^{*}F(x,y)$.\\

Next we introduce the four derivative-like mappings $\widehat{\mathcal{S}}F:\mathbb{R}^{n} \times  \mathbb{R}^{m} \tto \Z_{nm},
\widehat{\mathcal{S}^{*}}F:\mathbb{R}^{n} \times  \mathbb{R}^{m} \tto \Z_{mn},
{\mathcal{S}}F:\mathbb{R}^{n} \times  \mathbb{R}^{m} \rightrightarrows \Z_{nm}$ and ${\mathcal{S}}^{*}F:\mathbb{R}^{n} \times  \mathbb{R}^{m} \rightrightarrows \Z_{mn}$ defined by
\begin{gather*}
\widehat{\mathcal{S}}F(x,y):=\begin{cases}\gph DF(x,y)&\mbox{ if }(x,y)\in \mathcal{O}_{F}\\
\emptyset& \mbox{ otherwise, }
\end{cases}\\
\widehat{\mathcal{S}^{*}}F(x,y):=\begin{cases}
\gph \widehat{D}^{*}F(x,y)&\mbox{ if }(x,y)\in \mathcal{O}_{F}\\
\emptyset&\mbox{ otherwise, }
\end{cases}
\end{gather*}
\begin{align*}
{\mathcal{S}}F(x,y):&= \Limsup\limits_{\stackrel{\gph F}{(u,v)\rightarrow(x,y)}}
\widehat{\mathcal{S}}F(u,v)\\
&=\{L \in \Z_{nm} \mv \exists(x_{k},y_{k})\stackrel{\mathcal{O}_{F}}{\rightarrow}
(x,y) \mbox{ such that } \lim d_{\Z_{nm}}\big(L, \gph DF(x_{k},y_{k})\big)=0\},
\end{align*}
and
\begin{align*}
{\mathcal{S}^{*}}F(x,y):&= \Limsup\limits_{\stackrel{\gph F}{(u,v)\rightarrow(x,y)}}
\widehat{\mathcal{S}^{*}}F(u,v)
\\
&=\{L^{*} \in \Z_{mn} \mv \exists(x_{k},y_{k})\stackrel{\mathcal{O}_{F}}{\rightarrow}
(x,y) \mbox{ such that } \lim d_{\Z_{mn}}\big(L^{*}, \gph \widehat{D}^{*}F(x_{k},y_{k})\big)=0\}.
\end{align*}
Both $\Sp F$ and $\Sp^*F$ constitute  generalized derivatives of $F$ whose elements, by virtue of the above definitions, are subspaces of the graphs of the outer limiting graphical derivative and the limiting coderivative:
\begin{gather}
\label{EqSpSubsetDF}  L\subset\gph D^\sharp F(x,y)\subset \gph D_{*}F(x,y)\ \forall L\in\Sp F(x,y),\\
\label{EqSp*SubsetDF*}L^*\subset \gph D^{*}F(x,y)\ \forall L^*\in\Sp^*F(x,y).
\end{gather}
In what follows  ${\mathcal{S}}F$ will be called {\em SC (subspace containing) limiting graphical derivative} and ${\mathcal{S}^{*}}F$ will be termed {\em SC limiting coderivative at $(x,y)$}.
\if{ By virtue of the above definitions one has that
\[
{\mathcal{S}^{*}}F(x,y)\subset \gph D^{*}F(x,y)\big(\mbox{ and }  {\mathcal{S}}F(x,y)\subset                 \gph D_{*}F(x,y)\big).
\]}\fi

\if{
Analogously to the distance in $\Z_{nm}$ we may introduce a distance (denoted by $d^\perp$) also in the space of orthogonal subspaces to elements $L \in \Z_{nm}$ via
\[
d^{\perp}(L^{\perp}_{1}, L^{\perp}_{2}):= \| P^{\perp}_{1} - P^{\perp}_{2}\|,
\]
where $P^{\perp}_{1}$ stands for the $(n+m)\times (n+m)$ matrix of orthogonal projection onto $L^{\perp}_{i}, i=1,2$. Since
\[
P_{i}+P^{\perp}_{i}=I, \quad i = 1,2,
\]
it follows that
\[
d^{\perp}(L^{\perp}_{1}, L^{\perp}_{2})= \| I-P_{1}-(I-P_{2})\| = \| P_{1}-P_{2}\| = d_{\Z_{nm}}(L_{1}, L_{2}).
\]
}\fi
Due to the isometry $L\to L^*$ 
 we obtain a useful mutual relationship between $\mathcal{S}F(\bar{x},\bar{y})$ and $\Sp^*F(\xb,\yb)$. 
It holds, namely, that
\begin{equation}\label{eq-35}
\Sp^*
F(\bar{x},\bar{y})=\{L^{*}\mv L \in \mathcal{S}F(\bar{x},\bar{y})\} ~\mbox{ and } ~ \mathcal{S}F(\bar{x},\bar{y}) = \{L \mv L^{*}\in \mathcal{S}^{*}F(\bar{x},\bar{y}) \},
\end{equation}
which enables us together with (\ref{eq-34}) a simple conversion of the statements in terms of $L \in \mathcal{S}F(\bar{x},\bar{y})$ to statements in terms of $ L^{*}\in \mathcal{S}^{*}F(\bar{x},\bar{y}) $ and vice versa.

On the basis of $\Sp^*F(\bar{x},\bar{y})$ we may now introduce the following notion playing a crucial role in the sequel. \\
\begin{definition}
A mapping
$F:\mathbb{R}^{n} \rightrightarrows   \mathbb{R}^{m}$ is said to have the {\em SCD property} at $(\bar{x},\bar{y})\in \gph F$, provided $\mathcal{S}^{*}F(\bar{x},\bar{y})\neq \emptyset$. $F$ is termed an {\em SCD mapping }if it has the SCD property at all points of $\gph F$.
\end{definition}
By virtue of \eqref{eq-35}, the \SCD property at $(\xb,\yb)$ is obviously equivalent with the condition $\Sp F(\xb,\yb)\not=\emptyset$.

Since we consider convergence in the compact metric space $\Z_{nm}$, we obtain readily the following result.
\begin{lemma}[{cf.\cite[Lemma 3.6]{GfrOut22a}}]\label{LemSCDproperty}
 A mapping $F:\R^n\tto\R^m$ has the \SCD property at $(x,y)\in\gph F$ if and only if $(x,y)\in\cl \OO_F$. Further, $F$ is an \SCD mapping if and only if $\cl \OO_F=\cl \gph F$, i.e., $F$ is graphically smooth of dimension $n$ at the points of a dense subset of its graph.
\end{lemma}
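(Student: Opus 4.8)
The plan is to establish the pointwise characterization first and then obtain the global statement by an elementary manipulation of closures. For the pointwise part I would work throughout with $\Sp F$ rather than $\Sp^*F$, since by \eqref{eq-35} the \SCD property at $(x,y)$ — that is, $\Sp^*F(x,y)\neq\emptyset$ — is equivalent to $\Sp F(x,y)\neq\emptyset$.

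For the ``only if'' direction, I would take any $L\in\Sp F(x,y)$ and simply unwind the definition of $\Sp F(x,y)$ as the Painlev\'e--Kuratowski outer limit of $\widehat{\Sp}F$ along $\gph F$: by the explicit description of $\Sp F$, such an $L$ comes equipped with a sequence $(x_k,y_k)\setto{\OO_F}(x,y)$ with $d_{\Z_{nm}}\big(L,\gph DF(x_k,y_k)\big)\to0$. In particular $(x_k,y_k)\in\OO_F$ and $(x_k,y_k)\to(x,y)$, so $(x,y)\in\cl\OO_F$. For the ``if'' direction, starting from $(x,y)\in\cl\OO_F$ I would pick $(x_k,y_k)\in\OO_F$ with $(x_k,y_k)\to(x,y)$, set $L_k:=\gph DF(x_k,y_k)=T_{\gph F}(x_k,y_k)$, which lies in $\Z_{nm}$ precisely because $(x_k,y_k)\in\OO_F$, and then invoke the sequential compactness of $\Z_{nm}$ from Lemma \ref{LemBasic}(ii) to extract a subsequence $L_{k_j}\to L\in\Z_{nm}$. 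By construction $(x_{k_j},y_{k_j})\setto{\OO_F}(x,y)$ and $d_{\Z_{nm}}(L,L_{k_j})\to0$, which is exactly the defining condition for $L\in\Sp F(x,y)$; hence $\Sp F(x,y)\neq\emptyset$, i.e.\ $F$ has the \SCD property at $(x,y)$.

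For the global statement I would argue as follows: $F$ is an \SCD mapping iff it has the \SCD property at every $(x,y)\in\gph F$, which by the pointwise equivalence just proved means $\gph F\subseteq\cl\OO_F$. Combining this with the trivial inclusion $\OO_F\subseteq\gph F$ (hence $\cl\OO_F\subseteq\cl\gph F$) shows that $\gph F\subseteq\cl\OO_F$ is equivalent to $\cl\OO_F=\cl\gph F$. Finally, the chain $\OO_F\subseteq\gph F\subseteq\cl\OO_F=\cl\gph F$ says precisely that $\OO_F$ is dense in $\gph F$, which is the last assertion of the lemma.

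I do not expect a genuine obstacle here: the argument is essentially a matter of reading the definitions, and the only substantive input is the (sequential) compactness of the metric space $\Z_{nm}$ from Lemma \ref{LemBasic}(ii), which supplies the limiting subspace $L$ in the ``if'' direction. The one point to keep an eye on is that the outer limit defining $\Sp F$ is formally taken along $\gph F$ but is automatically supported on $\OO_F$, because $\widehat{\Sp}F(u,v)$ is the singleton $\{\gph DF(u,v)\}$ on $\OO_F$ and empty off it; thus ``$\Sp F(x,y)\neq\emptyset$'' and ``$(x,y)$ is approachable within $\OO_F$ by points whose tangent subspaces converge'' are the same condition, and compactness makes the convergence of the tangent subspaces automatic once the approaching $\OO_F$-points are available.
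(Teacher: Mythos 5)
Your proposal is correct and follows essentially the same route as the paper, which proves the lemma precisely by unwinding the definition of $\Sp F$ and invoking the sequential compactness of the metric space $\Z_{nm}$ (Lemma \ref{LemBasic}(ii)) to produce a limiting subspace along any sequence in $\OO_F$ approaching $(x,y)$; the closure manipulations for the global statement are the same elementary ones. No gaps to report.
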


The derivatives $\Sp F$ and $\Sp^*F$ can be considered as a generalization of the B-Jacobian to multifunctions. In case of single-valued continuous mappings one has the following relationship.
\begin{lemma}\label{LemSCDSingleValued}
  Let $U\subset \R^n$ be open and let $f:U\to\R^m$ be continuous. Then for every $x\in U$ there holds
  \begin{align}
    \label{EqGenBSubdiff1}&\Sp f(x):=\Sp\big(x,f(x)\big)\supseteq \{\rge(I,A)\mv A\in\overline{\nabla} f(x)\},\\
    \label{EqGenBSubdiff2}&\Sp^* f(x):=\Sp^*\big(x,f(x)\big)\supseteq \{\rge(I,A^T)\mv A\in\overline{\nabla} f(x)\}.
  \end{align}
  If $f$ is Lipschitz continuous near $x$, these inclusions hold with equality and $f$ has the \SCD property around $x$.
\end{lemma}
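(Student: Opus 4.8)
\emph{Proof plan.} The plan is to get the two inclusions directly from the definition of $\Sp F$, and then, in the Lipschitz case, to obtain the reverse inclusions by identifying graphical smoothness of dimension $n$ with Fréchet differentiability. For \eqref{EqGenBSubdiff1}: if $\overline{\nabla}f(x)=\emptyset$ there is nothing to prove; otherwise fix $A\in\overline{\nabla}f(x)$ and pick $x_k\to x$ with $f$ Fréchet differentiable at $x_k$ and $\nabla f(x_k)\to A$. At each such $x_k$ one has $T_{\gph f}(x_k,f(x_k))=\rge(I,\nabla f(x_k))\in\Z_{nm}$, hence $(x_k,f(x_k))\in\mathcal O_f$ and $\widehat{\mathcal S}f(x_k,f(x_k))=\rge(I,\nabla f(x_k))$. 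Since $f$ is continuous, $(x_k,f(x_k))\to(x,f(x))$, and Lemma \ref{LemBasic}(i) (with the constant subspace sequence $\R^n$ and the full-column-rank matrices $(I,\nabla f(x_k))\to(I,A)$) yields $d_{\Z_{nm}}\big(\rge(I,\nabla f(x_k)),\rge(I,A)\big)\to0$; by the definition of $\Sp F$ as an outer set limit this gives $\rge(I,A)\in\Sp f(x)$. A short computation from \eqref{eq-31a} shows $\big(\rge(I,A)\big)^{*}=\rge(I,A^{T})$, so \eqref{eq-35} converts \eqref{EqGenBSubdiff1} into \eqref{EqGenBSubdiff2}.

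Assume now that $f$ is Lipschitz with modulus $\ell$ on a neighborhood $U_0$ of $x$. The crucial auxiliary claim is: for $u\in U_0$ one has $(u,f(u))\in\mathcal O_f$ if and only if $f$ is Fréchet differentiable at $u$, in which case $T_{\gph f}(u,f(u))=\rge(I,\nabla f(u))$. The "if'' part has just been used above. For "only if'', Lipschitz continuity bounds all difference quotients $\big(f(u+tw)-f(u)\big)/t$ with $\norm{w}=1$, so the projection of $T_{\gph f}(u,f(u))$ onto $\R^n$ is all of $\R^n$; if moreover $T_{\gph f}(u,f(u))$ is an $n$-dimensional subspace, this projection is a linear isomorphism and hence $T_{\gph f}(u,f(u))=\rge(I,A_u)$ for a unique $m\times n$ matrix $A_u$. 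A standard squeezing argument — if $f$ were not differentiable at $u$ with derivative $A_u$, one extracts $u_j\to u$ along which the normalized increments converge to some $(d,w)\in T_{\gph f}(u,f(u))$ with $w\neq A_ud$, contradicting $T_{\gph f}(u,f(u))=\rge(I,A_u)$ — then forces differentiability of $f$ at $u$ with $\nabla f(u)=A_u$.

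Granting the claim, take $L\in\Sp f(x)$ and a sequence $(x_k,f(x_k))\to(x,f(x))$ in $\mathcal O_f$ with $d_{\Z_{nm}}\big(L,\gph Df(x_k,f(x_k))\big)\to0$. By the claim, $f$ is differentiable at $x_k$, $\gph Df(x_k,f(x_k))=\rge(I,\nabla f(x_k))$, and $\norm{\nabla f(x_k)}\le\ell$; passing to a subsequence with $\nabla f(x_k)\to A$, we get $A\in\overline{\nabla}f(x)$, and Lemma \ref{LemBasic}(i) together with the triangle inequality in $d_{\Z_{nm}}$ forces $L=\rge(I,A)$. This gives equality in \eqref{EqGenBSubdiff1}, and \eqref{eq-35} upgrades it to equality in \eqref{EqGenBSubdiff2}. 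Finally, by Rademacher's theorem $\overline{\nabla}f(u)\neq\emptyset$ for every $u\in U_0$, so the inclusion already proved (applied at $u$ in place of $x$) gives $\Sp^{*}f(u,f(u))\neq\emptyset$ there; equivalently, by Lemma \ref{LemSCDproperty}, the density of differentiability points shows $(u,f(u))\in\cl\mathcal O_f$. Hence $f$ has the \SCD property around $x$. I expect the only genuine obstacle to be the auxiliary claim of the second paragraph: the point is that an $n$-dimensional tangent subspace to the graph of a Lipschitz mapping must actually be the graph of the Fréchet derivative (not merely of a single-valued linear graphical derivative), and it is exactly the Lipschitz bound on the difference quotients that makes the projection and squeezing arguments work.
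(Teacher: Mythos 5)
Your argument is correct, and it is exactly the intended proof: the paper itself only remarks that the proof of \cite[Lemma 3.11]{GfrOut22a} carries over with marginal modifications, and your two steps (Fr\'echet differentiability at $x_k$ gives $T_{\gph f}(x_k,f(x_k))=\rge(I,\nabla f(x_k))\in\Z_{nm}$, hence the inclusions via the outer limit and Lemma \ref{LemBasic}(i) plus the isometry $L\mapsto L^*$; and, in the Lipschitz case, the identification of graphical smoothness of dimension $n$ with Fr\'echet differentiability through the projection and squeezing arguments, together with the bound $\norm{\nabla f(x_k)}\le\ell$ and Rademacher) are precisely the content of that argument. In particular, you correctly isolated the one nontrivial point, namely that for Lipschitz $f$ an $n$-dimensional tangent subspace to the graph must be the graph of the Fr\'echet derivative, which is where the Lipschitz bound on the difference quotients is indispensable.
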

\begin{proof}
  We can carry over the proof of \cite[Lemma 3.11]{GfrOut22a} with marginal modifications.
\end{proof}


\subsection{SCD regularity}

For $m=n$ we recall the following weakening of metric regularity tailored to SCD mappings.\\
\begin{definition}[{\cite[Definition 4.1]{GfrOut22a}}]
A mapping $F:\mathbb{R}^{n} \rightrightarrows  \mathbb{R}^{n}$ is called {\em SCD regular} around $(\bar{x},\bar{y})$, provided it has the SCD property on a neighborhood of $(\bar{x},\bar{y})$ and for all $L^{*}\in \mathcal{S}^{*}F(\bar{x},\bar{y}) $ one has the implication
\begin{equation}\label{eq-40}
(v^*,0)\in L^*\ \Rightarrow\ v^*=0. 
\end{equation}
\end{definition}
It is easy to see, cf. \cite[Lemma 4.5]{GfrOut22a}, that implication (\ref{eq-40})
 is equivalent with the requirement that
 \begin{equation}\label{eq-41}
(u,0)\in L\ \Rightarrow\ u = 0 ~~ \mbox{ for all } ~~ L \in \mathcal{S}F(\bar{x},\bar{y}).
 \end{equation}
Further we observe that SCD regularity persists on a neighborhood of $(\bar{x},\bar{y})$, cf. \cite[Proposition 4.8]{GfrOut22a}, and, taking into account \eqref{EqSp*SubsetDF*} and the Mordukhovich criterion, it is implied by the (classical) metric regularity of $F$ around $(\bar{x},\bar{y})$.

The main vehicle in our stability analysis of SCD mappings in the fifth section are the following statements taken over from  \cite[Theorem 6.2, Corollary 6.4]{GfrOut22a}.

\begin{theorem}\label{ThStrMetrSubreg}
Assume that $F:\mathbb{R}^{n} \rightrightarrows  \mathbb{R}^{n}$ is SCD regular around a point $(\bar{x},\bar{y})\in \gph F$. Then there is a neighborhood $\mathcal{U}$ of $(\bar{x},\bar{y})$ such that $F$ is strongly metrically subregular at each point of $\gph F\cap \mathcal{U}$, where $F$ is \ssstar.
\end{theorem}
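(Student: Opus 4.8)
The plan is to combine the pointwise characterization of strong metric subregularity on a neighborhood (Theorem \ref{ThCharRegByDer}(iii)) with the subspace-containment inclusion \eqref{EqSpSubsetDF} and the \ssstar property, which is what lets one pass from the SCD-level information (subspaces in $\mathcal{S}F$) to the genuine outer limiting graphical derivative $D^\sharp F$. Concretely, it suffices to show: under SCD regularity of $F$ around $(\xb,\yb)$, for every point $(x_0,y_0)\in\gph F$ close enough to $(\xb,\yb)$ at which $F$ is \ssstar, the implication $0\in D^\sharp F(x_0,y_0)(u)\Rightarrow u=0$ holds, and then invoke Theorem \ref{ThCharRegByDer}(iii) (which is valid for $m=n$, indeed for general $m,n$, as noted after that theorem) at $(x_0,y_0)$.

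First I would fix such a point $(x_0,y_0)$ and suppose, for contradiction, that there is $u\neq 0$ with $0\in D^\sharp F(x_0,y_0)(u)$, i.e.\ $(u,0)\in T^\sharp_{\gph F}(x_0,y_0)$. By definition of the outer limiting tangent cone, there are sequences $(x_k,y_k)\setto{\gph F}(x_0,y_0)$ and $(u_k,v_k)\to(u,0)$ with $(u_k,v_k)\in T_{\gph F}(x_k,y_k)=\gph DF(x_k,y_k)$. The key step is to upgrade this to information about a limiting subspace: at points of $\OO_F$ the tangent cone is an $n$-dimensional subspace, and by Lemma \ref{LemSCDproperty} (the SCD property, holding on a neighborhood) one can approximate each $(x_k,y_k)$ by points of $\OO_F$; using the compactness of $\Z_{n}$ (Lemma \ref{LemBasic}(ii)) and a diagonal argument, one extracts a subspace $L\in\mathcal{S}F(x_0,y_0)$. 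The delicate point — and the one I would spend the most care on — is making sure the vector $(u,0)$ survives this limiting procedure, i.e.\ that $(u,0)\in L$; this is essentially the argument behind \eqref{EqSpSubsetDF} that $\gph D^\sharp F(x_0,y_0)\subseteq\bigcup_{L\in\mathcal{S}F(x_0,y_0)}L$ is NOT in general true — rather one must choose the approximating points of $\OO_F$ so that the corresponding tangent directions still converge to $(u,0)$. Here the \ssstar property at $(x_0,y_0)$ enters: it forces the regular normals at nearby graph points to be nearly orthogonal to the displacement $(x,y)-(x_0,y_0)$, which couples the ``horizontal'' limiting behavior of $T_{\gph F}$ with that of $\widehat N_{\gph F}$, and this is exactly the mechanism (used already in \cite{GfrOut22a}) that yields $(u,0)\in L$ for some $L\in\mathcal{S}F(x_0,y_0)$.

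Once $(u,0)\in L$ with $L\in\mathcal{S}F(x_0,y_0)$ is established, I would invoke the persistence of SCD regularity on a neighborhood (cited right before Theorem \ref{ThStrMetrSubreg}, from \cite[Proposition 4.8]{GfrOut22a}): shrinking the neighborhood $\mathcal{U}$ if necessary, $F$ is SCD regular around every $(x_0,y_0)\in\gph F\cap\mathcal{U}$, hence in particular the characterization \eqref{eq-41} holds at $(x_0,y_0)$, giving $(u,0)\in L\Rightarrow u=0$ — contradicting $u\neq 0$. Therefore $0\in D^\sharp F(x_0,y_0)(u)$ forces $u=0$, and Theorem \ref{ThCharRegByDer}(iii) yields that $F$ is strongly metrically subregular around $(x_0,y_0)$; a fortiori it is strongly metrically subregular \emph{at} $(x_0,y_0)$, which is the asserted conclusion for each such point in $\gph F\cap\mathcal{U}$.

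I expect the main obstacle to be precisely the second step: proving that from $(u,0)\in T^\sharp_{\gph F}(x_0,y_0)$ together with the \ssstar property at $(x_0,y_0)$ one can produce a single subspace $L\in\mathcal{S}F(x_0,y_0)$ containing $(u,0)$. This requires carefully interleaving two limiting processes — the one defining $T^\sharp$ (over graph points $x_k\to x_0$ and tangent directions) and the one defining $\mathcal{S}F$ (over points of $\OO_F$) — and using \ssstar to control the angle between the approximate tangent vectors and the limiting subspace, so that the perturbation introduced by replacing $(x_k,y_k)$ with a nearby point of $\OO_F$ does not destroy convergence of the relevant direction to $(u,0)$. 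All other ingredients (compactness of $\Z_n$, the equivalence \eqref{eq-41}, persistence of SCD regularity, and the derivative criterion) are quotable from the excerpt.
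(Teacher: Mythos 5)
Your route has a genuine gap, and it is exactly at the step you yourself flag as the main obstacle. The whole argument rests on the claim that $(u,0)\in T^\sharp_{\gph F}(x_0,y_0)$, together with the \ssstar property \emph{at} $(x_0,y_0)$, produces a subspace $L\in\Sp F(x_0,y_0)$ containing $(u,0)$. You give no argument for this beyond appealing to a ``mechanism used already in \cite{GfrOut22a}'', but no such mechanism exists there: the \ssstar inequality constrains regular normals at nearby graph points against the displacement $(x,y)-(x_0,y_0)$; it gives no control whatsoever over how tangent vectors $(u_k,v_k)\in T_{\gph F}(x_k,y_k)$ at (possibly non-graphically-smooth) points relate to the tangent spaces at nearby points of $\OO_F$, which is what you would need to keep the direction $(u,0)$ alive when you replace $(x_k,y_k)$ by points of $\OO_F$. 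Moreover, your route would prove strictly more than the statement: the $D^\sharp$-criterion at $(x_0,y_0)$ is, by Theorem \ref{ThCharRegByDer}(iii), equivalent to strong metric subregularity \emph{around} $(x_0,y_0)$, i.e.\ at all nearby graph points including those where $F$ is not \ssstar, whereas the theorem asserts subregularity only \emph{at} the \ssstar points, and Corollary \ref{CorStrMetrSubreg} requires the \ssstar and SCD properties on a whole neighborhood to reach an ``around'' conclusion. The paper's own commentary (after Corollary \ref{CorStrMetrSubreg} and around Proposition \ref{PropSurprise}) emphasizes that $\gph D^\sharp F$ is in general much larger than the union of the subspaces in $\Sp F$, so the identification you need is precisely what one should not expect; only the inclusion \eqref{EqSpSubsetDF} is available.

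For comparison, the paper does not reprove this theorem at all (it is quoted from \cite[Theorem 6.2, Corollary 6.4]{GfrOut22a}), and the proof there does not pass through $D^\sharp$. It is a direct dual estimate: by persistence of SCD regularity there are $\kappa$ and a neighborhood such that $\norm{v^*}\le\kappa\norm{u^*}$ for all $(v^*,u^*)\in L^*$, $L^*\in\Sp^*F(x,y)$, $(x,y)\in\gph F$ near $(\xb,\yb)$. At a point $(\hat x,\hat y)$ where $F$ is \ssstar, the \ssstar inequality passes to the limit so that it holds for all elements of every $L^*\in\Sp^*F(x,y)$ with $(x,y)\in\gph F$ close to $(\hat x,\hat y)$; since SCD-regular subspaces $L^*$ project bijectively onto the second component, one may pick the element with $x^*=x-\hat x$, hence $\norm{y^*}\le\kappa\norm{x-\hat x}$, and the inequality yields $\norm{x-\hat x}\le\kappa'\norm{y-\hat y}$ for $y\in F(x)$ near $\hat y$, which is strong metric subregularity at $(\hat x,\hat y)$. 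If you want a complete proof, this estimate-based argument is the one to reproduce; the tangential/$D^\sharp$ route cannot be completed from the ingredients you cite.
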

\begin{corollary}\label{CorStrMetrSubreg}
  Assume that $F:\R^n\tto\R^n$ is \ssstar and has the SCD property  around $(\xb,\yb)\in\gph F$. Then $F$ is strongly metrically subregular around $(\xb,\yb)$ if and only if $F$ is SCD regular around $(\xb,\yb)$.
\end{corollary}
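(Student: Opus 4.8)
Under the standing hypotheses both ``SCD regular around $(\xb,\yb)$'' and ``strongly metrically subregular around $(\xb,\yb)$'' collapse to pointwise conditions at the reference point, so the plan is to prove the equivalence of those pointwise conditions. Indeed, since $F$ is assumed to have the SCD property around $(\xb,\yb)$, the definition of SCD regularity around $(\xb,\yb)$ reduces to the single implication \eqref{eq-40}, equivalently \eqref{eq-41}, for $L\in\Sp F(\xb,\yb)$; and by Theorem~\ref{ThCharRegByDer}(iii), strong metric subregularity around $(\xb,\yb)$ is equivalent to the implication \eqref{EqLocStrSubregCrit} involving $D^\sharp F(\xb,\yb)$. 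Thus everything rests on the inclusion \eqref{EqSpSubsetDF}, which places every $L\in\Sp F(\xb,\yb)$ inside $\gph D^\sharp F(\xb,\yb)$, together with Theorem~\ref{ThStrMetrSubreg}.

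\textbf{From strong metric subregularity around $(\xb,\yb)$ to SCD regularity around $(\xb,\yb)$.} By Theorem~\ref{ThCharRegByDer}(iii) the hypothesis yields $0\in D^\sharp F(\xb,\yb)(u)\Rightarrow u=0$. The SCD property at $(\xb,\yb)$ makes $\Sp F(\xb,\yb)$ nonempty; I would then take an arbitrary $L\in\Sp F(\xb,\yb)$ and an arbitrary $u$ with $(u,0)\in L$, use \eqref{EqSpSubsetDF} to conclude $(u,0)\in\gph D^\sharp F(\xb,\yb)$, i.e.\ $0\in D^\sharp F(\xb,\yb)(u)$, and hence $u=0$. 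This is exactly \eqref{eq-41}; combined with the SCD property on a neighborhood it says that $F$ is SCD regular around $(\xb,\yb)$ (and, as recorded in the text, this property then persists on a neighborhood).

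\textbf{From SCD regularity around $(\xb,\yb)$ to strong metric subregularity around $(\xb,\yb)$.} Here I would invoke Theorem~\ref{ThStrMetrSubreg}: it produces a neighborhood $\mathcal{U}$ of $(\xb,\yb)$ on which $F$ is strongly metrically subregular at every point of $\gph F\cap\mathcal{U}$ at which $F$ is \ssstar. Since $F$ is \ssstar around $(\xb,\yb)$ — that is, \ssstar at every point of $\gph F$ in some neighborhood of $(\xb,\yb)$ — intersecting that neighborhood with $\mathcal{U}$ gives a neighborhood $W$ such that \emph{every} point of $\gph F\cap W$ is a \ssstar point, so $F$ is strongly metrically subregular at each of them. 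By Definition~\ref{DefLocStrSubReg} this is strong metric subregularity around $(\xb,\yb)$.

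\textbf{Main obstacle.} I do not anticipate a serious difficulty: the whole content is already carried by Theorem~\ref{ThStrMetrSubreg}, Theorem~\ref{ThCharRegByDer}(iii) and the inclusion \eqref{EqSpSubsetDF}, and the argument is essentially a matching-up of neighborhoods. The only step deserving care is the last implication, where the \ssstar hypothesis is used precisely to upgrade the conditional conclusion of Theorem~\ref{ThStrMetrSubreg} (``at those graph points where $F$ is \ssstar'') to an unconditional statement on a full neighborhood; without it the equivalence would break. For the record, this corollary is \cite[Corollary~6.4]{GfrOut22a}; it is stated here because the stability results of the present paper will be reduced to it via suitable reformulations rather than reproved, so in the write-up it may simply be attributed there with the short argument above recalled for completeness.
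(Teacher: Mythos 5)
Your proposal is correct and matches the paper's own treatment: the corollary is simply taken over from \cite[Corollary 6.4]{GfrOut22a}, and the text surrounding it sketches exactly your two ingredients — Theorem~\ref{ThCharRegByDer}(iii) together with \eqref{EqSpSubsetDF} for the direction from strong metric subregularity around $(\xb,\yb)$ to SCD regularity (which, as you and the paper both note, needs no \ssstar assumption), and Theorem~\ref{ThStrMetrSubreg} combined with the \ssstar property on a neighborhood for the converse. The only point worth recording is that Definition~\ref{DefLocStrSubReg} requires a uniform constant $\kappa$ on the neighborhood, which the abridged statement of Theorem~\ref{ThStrMetrSubreg} suppresses but which is supplied by the full version in \cite[Theorem 6.2]{GfrOut22a}.
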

 Conversely, thanks to Theorem \ref{ThCharRegByDer}(iii) and \eqref{EqSpSubsetDF}, strong metric subregularity around $(\bar{x},\bar{y})$ implies the SCD regularity at $(\bar{x},\bar{y})$  even in absence of the \ssstar property. Since by virtue of \cite[Theorem 3H.3]{DoRo14} $F$ is strongly metrically subregular at
$(\bar{x},\bar{y})$  if and only if $F^{-1}$ is isolatedly calm at $(\yb,\xb)$, 
Corollary \ref{CorStrMetrSubreg} thus provides us with a workable characterization of isolated calmness of inverses to SCD mappings having the \ssstar property.

Let us compare Corollary \ref{CorStrMetrSubreg} with the characterization of strong metric subregularity on a neighborhood provided by Theorem \ref{ThCharRegByDer}(iii). To this aim we write down relation \eqref{EqLocStrSubregCrit} equivalently in the form
\begin{equation}\label{EqLocStrSubregCrit1}
  (u,0)\in\gph D^\sharp F(\xb,\yb)\ \Rightarrow\ u=0.
\end{equation}
By taking into account \eqref{EqSpSubsetDF} and \eqref{eq-41}, we see that we need not to check \eqref{EqLocStrSubregCrit1} for the whole graph of $D^\sharp F(\xb,\yb)$, but only for the part which is given by the subspaces contained in $\Sp F(\xb,\yb)$. It seems that for the analysis of strong metric subregularity and isolated calmness on a neighborhood of \ssstar SCD mappings the outer limiting graphical derivative is much too large and contains useless parts. Moreover, it seems that the outer limiting graphical derivative is much harder to compute than the SC limiting graphical derivative.

Because of the mentioned relationship between the metric regularity and SCD regularity and Theorem \ref{ThStrMetrSubreg} we arrive finally at the following corollary.
\begin{corollary}
Assume that an SCD mapping $F:\mathbb{R}^{n} \rightrightarrows  \mathbb{R}^{n}$ is metrically regular and \ssstar around $(\bar{x},\bar{y})$. Then $F^{-1}$  has not only the Aubin property around $(\yb,\xb)$, 
 but it is also isolatedly calm around $(\yb,\xb)$. 
\end{corollary}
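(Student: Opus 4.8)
The plan is to chain together the three facts already assembled in this section, so the proof is essentially a bookkeeping exercise. First I would dispose of the Aubin half of the conclusion: by the equivalence recalled just after the definition of the Aubin property, metric regularity of $F$ around $(\xb,\yb)$ (with some constant $\kappa$) is the same as the Aubin property of $F^{-1}$ around $(\yb,\xb)$ (with the same constant $\kappa$). Since metric regularity of $F$ around $(\xb,\yb)$ is assumed, this half requires no work.

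For the isolated calmness half, the key intermediate claim is that $F$ is strongly metrically subregular \emph{around} $(\xb,\yb)$ in the sense of Definition \ref{DefLocStrSubReg}. To reach it I would first argue that $F$ is SCD regular around $(\xb,\yb)$. Indeed, since $F$ is an SCD mapping it has the SCD property on a whole neighborhood of $(\xb,\yb)$, so only implication \eqref{eq-40} needs checking at $(\xb,\yb)$. Metric regularity of $F$ around $(\xb,\yb)$ entails the Mordukhovich criterion \eqref{EqMoCrit}, i.e., $0\in D^*F(\xb,\yb)(y^*)\Rightarrow y^*=0$; combining this with the inclusion \eqref{EqSp*SubsetDF*}, which gives $L^*\subset\gph D^*F(\xb,\yb)$ for every $L^*\in\Sp^*F(\xb,\yb)$, we obtain at once $(v^*,0)\in L^*\Rightarrow v^*=0$ for all such $L^*$, which is exactly \eqref{eq-40}. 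This is precisely the remark made right after the definition of SCD regularity, so it may simply be cited.

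Now $F$ is simultaneously \ssstar around $(\xb,\yb)$, an SCD mapping (hence SCD around $(\xb,\yb)$), and SCD regular around $(\xb,\yb)$, so Corollary \ref{CorStrMetrSubreg} applies directly and yields that $F$ is strongly metrically subregular around $(\xb,\yb)$. (Alternatively one can invoke Theorem \ref{ThStrMetrSubreg}: SCD regularity produces a neighborhood $\mathcal{U}$ of $(\xb,\yb)$ on which $F$ is strongly metrically subregular at every point of $\gph F\cap\mathcal{U}$ \emph{where $F$ is \ssstar}; intersecting $\mathcal{U}$ with the neighborhood on which $F$ is \ssstar makes the qualifier vacuous and gives strong metric subregularity on a neighborhood in the sense of Definition \ref{DefLocStrSubReg}.) Finally, invoking the lemma that identifies strong metric subregularity of $F$ around $(\xb,\yb)$ with isolated calmness of $F^{-1}$ around $(\yb,\xb)$ closes the argument, and together with the first paragraph we conclude that $F^{-1}$ enjoys both the Aubin property and isolated calmness around $(\yb,\xb)$.

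There is no genuine obstacle here: the statement is a corollary and every step is either a cited equivalence or a direct application of Theorem \ref{ThStrMetrSubreg} / Corollary \ref{CorStrMetrSubreg}. The only point that deserves an explicit line is the passage from the formulation of Theorem \ref{ThStrMetrSubreg} (``strongly metrically subregular at each point of $\gph F\cap\mathcal{U}$ where $F$ is \ssstar'') to strong metric subregularity on a full neighborhood as in Definition \ref{DefLocStrSubReg}; as noted above, this is handled by simply shrinking $\mathcal{U}$ to lie inside the neighborhood on which the \ssstar property holds.
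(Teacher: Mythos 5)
Your proposal is correct and follows essentially the same route the paper intends: metric regularity yields the Aubin property of $F^{-1}$ directly and, via the Mordukhovich criterion together with the inclusion \eqref{EqSp*SubsetDF*}, gives SCD regularity around $(\xb,\yb)$, after which Theorem \ref{ThStrMetrSubreg} (or Corollary \ref{CorStrMetrSubreg}) and the \ssstar assumption yield strong metric subregularity on a neighborhood, i.e., isolated calmness of $F^{-1}$ around $(\yb,\xb)$. Your explicit remark about shrinking the neighborhood so that the \ssstar qualifier in Theorem \ref{ThStrMetrSubreg} becomes vacuous is exactly the small bookkeeping step the paper leaves implicit.
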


\section{Calculus}
In this section we present some calculus rules for SCD mappings which can be useful in various situations.

Consider a mapping $F:\mathbb{R}^{n} \rightrightarrows  \mathbb{R}^{m}$ defined by
\begin{equation}\label{EqImplF}
\gph F = \{(x,y)\in \mathbb{R}^{n} \times  \mathbb{R}^{m} \mv \Phi (x,y)\in \gph Q\},
\end{equation}
where $\Phi: \R^n\times\R^m\to\R^l\times\R^m$ is a continuously differentiable function and $Q:\R^l\tto\R^m$ is a closed-graph mapping.
\begin{theorem}\label{ThCalc1}
  Assume that $(\xb,\yb)\in \gph F$, $Q$ has the SCD property at $\Phi(\xb,\yb)$ and the $(l+m) \times(n+m)$ matrix $\nabla \Phi(\bar{x},\bar{y})$ has full row rank $l+m$. Then $F$ has the SCD property at $(\bar{x},\bar{y})$,
\begin{equation}\label{eq-46}
\Sp F(\bar{x},\bar{y})=\{L \in \Z_{nm}\mv \nabla\Phi (\bar{x},\bar{y})L \in \Sp Q\big(\Phi(\bar{x},\bar{y})\big)\}
\end{equation}
and
\begin{equation}\label{eq-47}
\Sp^{*}F (\bar{x},\bar{y})=
\{L^{*} \in \Z_{mn} \mv L^{*} = S_{nm}\nabla\Phi
(\bar{x},\bar{y})^{T}S^{T}_{lm}M^* \mbox{ with } M^* \in \Sp^{*} Q \big(\Phi(\bar{x},\bar{y})\big)\}.
\end{equation}
\end{theorem}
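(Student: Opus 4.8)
The plan is to reduce everything to the definition of the $\Sp$-operators via sequences of points where the graph is smooth of the right dimension, and to transport such sequences between $\gph F$ and $\gph Q$ using the full-row-rank assumption on $\nabla\Phi(\xb,\yb)$. Let me write $\Psi:=\nabla\Phi(\xb,\yb)$, an $(l+m)\times(n+m)$ matrix of full row rank $l+m$; in particular $n+m\ge l+m$, i.e. $n\ge l$, and $\Psi$ has a right inverse and a nontrivial kernel of dimension $n-l$. The first step is a linear-algebra lemma: for a subspace $L\in\Z_{nm}$ (dimension $n$), the image $\Psi L$ is a subspace of $\R^{l+m}$ of dimension $\le n$; I would want to identify the ``right'' $(l+m)$-dimensional... — wait, $\Psi L$ lives in $\R^{l+m}$ and has dimension at most $n$, but $\Sp Q$ consists of $l$-dimensional subspaces of $\R^{l+m}$. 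So the correct reading of \eqref{eq-46} is that one restricts to those $L$ for which $\Psi L$ happens to be an $l$-dimensional subspace lying in $\Sp Q(\Phi(\xb,\yb))$, which is exactly how the constraint $\nabla\Phi(\xb,\yb)L\in\Sp Q(\Phi(\xb,\yb))$ should be parsed. I would first record that under full row rank, $\dim \Psi L = n - \dim(L\cap\ker\Psi)$, and that $\ker\Psi$ has dimension $n-l$, so $\dim\Psi L = l$ precisely when $\ker\Psi\subset L$.

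The second step establishes the correspondence at points of graphical smoothness. Suppose $(x,y)\in\OO_F$ is near $(\xb,\yb)$, so $T_{\gph F}(x,y)=:L$ is $n$-dimensional. Since $\gph F=\Phi^{-1}(\gph Q)$ and $\nabla\Phi(x,y)$ has full row rank near $(\xb,\yb)$ (full row rank is an open condition), the standard tangent-cone-to-preimage formula gives $T_{\gph F}(x,y)=\nabla\Phi(x,y)^{-1}\big(T_{\gph Q}(\Phi(x,y))\big)$, i.e. $\nabla\Phi(x,y)L = T_{\gph Q}(\Phi(x,y))$ provided $\Phi(x,y)\in\OO_Q$; moreover this forces $\Phi(x,y)\in\OO_Q$ and conversely, because dimension count via $\dim L = \dim \nabla\Phi(x,y)L + \dim\ker\nabla\Phi(x,y) = \dim T_{\gph Q}(\Phi(x,y)) + (n-l)$ pins $\dim T_{\gph Q}(\Phi(x,y))=l$. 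So on a neighborhood, $(x,y)\in\OO_F \iff \Phi(x,y)\in\OO_Q$, and there $\widehat\Sp F(x,y)=\nabla\Phi(x,y)^{-1}\big(\widehat\Sp Q(\Phi(x,y))\big)$, with the understanding that the preimage of an $l$-space under a full-row-rank map is an $n$-space containing $\ker\nabla\Phi(x,y)$.

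The third step is the passage to the limit. For \eqref{eq-46}: take $L\in\Sp F(\xb,\yb)$, so there are $(x_k,y_k)\setto{\OO_F}(\xb,\yb)$ with $\gph DF(x_k,y_k)\to L$ in $\Z_{nm}$. Then $\Phi(x_k,y_k)\setto{\OO_Q}\Phi(\xb,\yb)$, and I need $\nabla\Phi(x_k,y_k)L_k\to\Psi L$ where $L_k:=\gph DF(x_k,y_k)$; this is exactly Lemma~\ref{LemBasic}(i) applied to the full-column-rank matrices that represent these linear maps (after choosing orthonormal bases, or equivalently working with the projection-matrix metric), using $\nabla\Phi(x_k,y_k)\to\Psi$ and $L_k\to L$. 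Hence $\Psi L=\lim T_{\gph Q}(\Phi(x_k,y_k))\in\Sp Q(\Phi(\xb,\yb))$, giving $\subseteq$. For $\supseteq$: given $L$ with $\Psi L=:M\in\Sp Q(\Phi(\xb,\yb))$, pick $q_k\setto{\OO_Q}\Phi(\xb,\yb)$ with $T_{\gph Q}(q_k)\to M$; since $\Phi$ is a submersion near $(\xb,\yb)$ (full row rank), I can lift, choosing $(x_k,y_k)\to(\xb,\yb)$ in $\gph F$ with $\Phi(x_k,y_k)=q_k$ (local surjectivity of $\Phi$ onto $\gph Q$ near the reference point — here I use that $\gph Q$ is the image side and $\Phi$ maps a neighborhood onto a neighborhood in $\R^{l+m}$, so it maps $\gph F$ locally onto $\gph Q$), so $(x_k,y_k)\in\OO_F$, and then $\nabla\Phi(x_k,y_k)^{-1}(T_{\gph Q}(q_k))=\gph DF(x_k,y_k)\to \Psi^{-1}M$; it remains to check $\Psi^{-1}M=L$, which holds iff $\ker\Psi\subset L$, and this in turn follows because $\gph DF(x_k,y_k)\supset\ker\nabla\Phi(x_k,y_k)$ and kernels of the convergent full-row-rank matrices $\nabla\Phi(x_k,y_k)$ converge to $\ker\Psi$ — so $L$ automatically contains $\ker\Psi$, and then $\Psi^{-1}(\Psi L)=L+\ker\Psi=L$. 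This also shows $\Sp F(\xb,\yb)\neq\emptyset$ whenever $\Sp Q(\Phi(\xb,\yb))\neq\emptyset$, i.e. the SCD property transfers. Finally, \eqref{eq-47} follows from \eqref{eq-46} purely formally: apply the isometry $L\mapsto L^*$ (relation \eqref{eq-35}), use that $L^\perp=\{\nabla\Phi(x,y)^T w : w\in(\nabla\Phi(x,y)L)^\perp\}$ at smooth points because $\Psi$ has full row rank (so $\range\Psi^T$ meets every fibre transversally in the relevant sense), pass to the limit via Lemma~\ref{LemBasic}(i) again on the transposed matrices, and rewrite the orthogonal-complement relation in terms of the $S$-matrices exactly as in the derivation of \eqref{eq-34}--\eqref{EqIsometry}. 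I expect the main obstacle to be the careful bookkeeping in the ``lifting'' direction of step three — ensuring the constructed sequence $(x_k,y_k)$ actually lies in $\gph F$ (not merely in $\Phi^{-1}$ of points near $\gph Q$) and in $\OO_F$, and nailing down that the limiting subspace of the preimages equals $L$ rather than some larger space; the full-row-rank hypothesis is what makes all of this go through, via the uniform nonsingularity of a right inverse of $\nabla\Phi$ near $(\xb,\yb)$.
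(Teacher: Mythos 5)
Your proposal is correct and follows essentially the same route as the paper: the local correspondence between $\OO_F$ and $\OO_Q$ under $\Phi$ via the tangent-cone preimage formula and a dimension count, lifting of sequences through the surjectivity/metric regularity of $\Phi$, and passage to the limit in $\Z_{nm}$, with \eqref{eq-47} obtained from the isometry $L\mapsto L^*$. The only place where the paper is more careful is the continuity of the image/preimage operation on subspaces: since the composed matrices $\nabla\Phi(x_k,y_k)Z_k$ are not of full column rank when $n>l$, Lemma \ref{LemBasic}(i) is applied there to the transposed (full-column-rank) matrices via $L_k^\perp=\nabla\Phi(x_k,y_k)^T M_k^\perp$ --- exactly the adjoint relation you invoke for \eqref{eq-47} --- which then delivers the primal convergence $L_k\to\nabla\Phi(\xb,\yb)^{-1}M$ as well.
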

\begin{proof}
Since $\nabla\Phi(\xb,\yb)$ is surjective, the mapping $\Phi$ is metrically regular around $\big((\xb,\yb),\Phi(\xb,\yb)\big)$, cf. \cite[Example 9.44]{RoWe98}. Moreover, there is an open neighborhood $\mathcal{W}$ of $(\xb,\yb)$ such that $\nabla\Phi(x,y)$ is surjective for all $(x,y)\in \mathcal{W}$ and $\widetilde{\mathcal{W}}=\Phi(\mathcal{W})$ is open. By virtue of \cite[Exercise 6.7]{RoWe98} it holds that
\begin{equation}\label{EqTangCone}
T_{\gph F}(x,y)=\{w\in  \R^n\times\R^m \mv\nabla\Phi(x,y)w \in T_{\gph Q}\big(\Phi(x,y)\big) \}
\end{equation}
for all $(x,y) \in \gph F \cap \mathcal{W}$. We now claim that
\begin{equation}\label{EqClaim1}
  \OO_Q\cap\widetilde{\mathcal{W}}=\{\Phi(x,y)\mv (x,y)\in \OO_F\cap\mathcal{W}\}.
\end{equation}
Indeed, consider $(x,y)\in \OO_F\cap\mathcal{W}$ and take two tangents $q_1,q_2\in T_{\gph Q}\big(\Phi(x,y)\big)$. Since $\nabla\Phi(x,y)$ is surjective, there exist $w_i$, $i=1,2$, with $\nabla \Phi(x,y)w_i=q_i$ implying $w_i\in T_{\gph F}(x,y)$ by \eqref{EqTangCone}. Since $T_{\gph F}(x,y)$ is a subspace, we have $\alpha_1w_1+\alpha_2w_2\in T_{\gph F}(x,y)$ $\forall\alpha_1,\alpha_2\in\R$ and consequently
\[\nabla\Phi(x,y)(\alpha_1w_1+\alpha_2w_2)=\alpha_1q_1+\alpha_2q_2\in T_{\gph Q}\big(\Phi(x,y)\big).\]
Hence $T_{\gph Q}\big(\Phi(x,y)\big)$ is a subspace.
From $(x,y)\in \OO_F$ we deduce that the dimension of the subspace $T_{\gph F}(x,y)$ is $n$. On the other hand, by \eqref{EqTangCone} together with the surjectivity of $\nabla \Phi(x,y)$, the dimension of $T_{\gph F}(x,y)$ equals to the dimension of the subspace $T_{\gph Q}\big(\Phi(x,y)\big)$ plus $(n+m)-(k+m)$, the dimension of the nullspace of $\nabla\Phi(x,y)$. Hence, the dimension of $T_{\gph Q}\big(\Phi(x,y)\big)$ is $k$ and $\Phi(x,y)\in\OO_Q\cap \widetilde{\mathcal{W}}$ is verified.

Next, consider $z\in \OO_Q\cap\widetilde{\mathcal{W}}$. Then we can find $(x,y)\in\mathcal{W}$ such that $z=\Phi(x,y)$ and using similar arguments as above, we can show that $T_{\gph F}(x,y)$ is a subspace of dimension $n$ implying $(x,y)\in (x,y)\in \OO_F\cap\mathcal{W}$. Hence our claim \eqref{EqClaim1} holds true.

Since $Q$ has the \SCD property at $\Phi(\xb,\yb)$, there holds $\Sp Q\big(\Phi(\xb,\yb)\big)\not =\emptyset$. Consider $M\in \Sp Q\big(\Phi(\xb,\yb)\big)$ together with a sequence $z_k\longsetto{\OO_Q}\Phi(\xb,\yb)$   such that $M_k:=T_{\gph Q}(z_k)\longsetto{\Z_{lm}} M$. For every $k$ sufficiently large we can find $(x_k,y_k)\in \mathcal{W}$ with $z_k=\Phi(x_k,y_k)$ and, due to the metric regularity of $\Phi$, $(x_k,y_k)\to(\xb,\yb)$. Further, $M_k^\perp$ converges in $\Z_{ml}$ to $M^\perp$. Let $L_k:=T_{\gph F}(x_k,y_k)=\nabla\Phi(x_k,y_k)^{-1}M_k$. Here, $\nabla\Phi(x_k,y_k)^{-1}$ denotes the inverse of the linear mapping induced by $\nabla\Phi(x_k,y_k)$. By our claim \eqref{EqClaim1} we have that $L_k\in \Z_{nm}$ and, since $L_k^\perp=\nabla\Phi(x_k,y_k)^TM_k^\perp$ by \cite[Corollary 16.3.2]{Ro70}, $L_k^*=S_{nm}\nabla\Phi(x_k,y_k)^TM_k^\perp$ converges to $L^*:=S_{nm}\nabla\Phi(\xb,\yb)^TM^\perp=S_{nm}\nabla\Phi(\xb,\yb)^TS_{lm}^TM^*$ by Lemma \ref{LemBasic}(i). On the other hand, since $\nabla\Phi(\xb,\yb)^TM^\perp=\big(\nabla\Phi(\xb,\yb)^{-1}M)^\perp$, we obtain $L=\nabla\Phi(\xb,\yb)^{-1}M$. These arguments show the inclusion ''$\supset$'' in \eqref{eq-46} and \eqref{eq-47}.

In order to show the reverse inclusion, consider $L\in \Sp F(\xb,\yb)$ together with sequences $(x_k,y_k)\longsetto{\OO_F}(\xb,\yb)$ and $L_k:=T_{\gph F}(x_k,y_k)\longsetto{\Z_{nm}}L$. By \eqref{EqTangCone} and \eqref{EqClaim1} together with the surjectivity of $\nabla\Phi(x_k,y_k)$, we obtain that $M_k:=\nabla\Phi(x_k,y_k)L_k=T_{\gph Q}\big(\Phi(x_k,y_k)\big)\in \Z_{lm}$. The metric space $\Z_{lm}$ is compact and thus, after possibly passing to a subsequence, we may assume that $M_k$ converges in $\Z_{lm}$ to some $M\in\Sp Q\big(\Phi(\xb,\yb)\big)$. Utilizing the same arguments as before, we obtain that the sequence
\[L_k^*=S_{nm}\nabla\Phi(x_k,y_k)^TS_{lm}^TM_k^*\]
converges to $L^*=S_{nm}\nabla\Phi(\xb,\yb)^TS_{lm}^TM^*$ and $L=\nabla\Phi(\xb,\yb)^{-1}M$. This completes the proof.
\end{proof}

As a first consequence of this theorem we derive that graphically Lipschitzian mappings have the SCD property.
\begin{definition}[cf.{\cite[Definition 9.66]{RoWe98}}]\label{DefGraphLip}A mapping $F:\R^n\tto\R^m$ is {\em graphically Lipschitzian of dimension $d$} at $(\xb,\yb)\in\gph F$ if there is an open neighborhood $W$ of $(\xb,\yb)$ and a one-to-one mapping $\Phi$ from $W$ onto an open subset of $\R^{n+m}$ with $\Phi$ and $\Phi^{-1}$ continuously differentiable, such that $\Phi(\gph F\cap W)$  is the graph of a Lipschitz continuous mapping $f:U\to\R^{n+m-d}$, where $U$ is an open set in $\R^d$.
\end{definition}
Many mappings $F:\R^n\tto\R^n$, important in applications, are graphically Lischitzian of dimension $n$. As an example we mention the subdifferential mapping of prox-regular and subdifferentially continuous functions $f:\R^n\to\overline{\R}$, cf. \cite[Proposition 13.46]{RoWe98}.
\begin{corollary}
  Assume that $F:\R^n\tto\R^m$ is graphically Lipschitzian of dimension $n$ at $(\xb,\yb)\in\gph F$. Then $F$ has the SCD property at $(\xb,\yb)$.
\end{corollary}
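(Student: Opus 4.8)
The plan is to reduce the statement to Theorem \ref{ThCalc1}. First I would unwind Definition \ref{DefGraphLip} with $d=n$: there are an open neighborhood $W$ of $(\xb,\yb)$, a one-to-one $C^1$ map $\Phi$ from $W$ onto an open subset of $\R^{n+m}=\R^n\times\R^m$ with $\Phi^{-1}$ also $C^1$, and a Lipschitz continuous mapping $f\colon U\to\R^{n+m-n}=\R^m$ on an open set $U\subseteq\R^n$, such that $\Phi(\gph F\cap W)=\gph f$. Since $\Phi$ is injective, this rewrites as
\[
\gph F\cap W=\{(x,y)\in W\mv \Phi(x,y)\in\gph f\},
\]
which is exactly the implicit form \eqref{EqImplF} with $l=n$ and $Q:=f$ (a single-valued, hence locally closed-graph, mapping), the only discrepancy being that $\Phi$ here is defined merely on $W$ rather than on all of $\R^n\times\R^m$.

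I would then dispose of this discrepancy by observing that the SCD property at $(\xb,\yb)$ depends only on $\gph F$ in an arbitrarily small neighborhood of $(\xb,\yb)$, and that the proof of Theorem \ref{ThCalc1} itself uses $\Phi$ (resp. $Q$) only on some neighborhood $\mathcal W$ of $(\xb,\yb)$ (resp. of $\Phi(\xb,\yb)$); hence the theorem applies to the locally defined $\Phi$ without change. (Alternatively one may extend $\Phi$ to a $C^1$ map on $\R^{n+m}$ leaving it unchanged near $(\xb,\yb)$.) It then remains to check the two hypotheses of Theorem \ref{ThCalc1}. The rank hypothesis is immediate: differentiating $\Phi^{-1}\circ\Phi=\mathrm{id}$ shows that $\nabla\Phi(\xb,\yb)$ is a nonsingular $(n+m)\times(n+m)$ matrix, so it has full row rank $l+m=n+m$. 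For the SCD property of $Q$ at $\Phi(\xb,\yb)$, write $\Phi(\xb,\yb)=(u_0,f(u_0))\in\gph f$ with $u_0\in U$; since $f$ is Lipschitz continuous near $u_0$, Lemma \ref{LemSCDSingleValued} guarantees that $f$, and hence $Q$, has the SCD property at $(u_0,f(u_0))=\Phi(\xb,\yb)$.

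With both hypotheses verified, Theorem \ref{ThCalc1} yields at once that $F$ has the SCD property at $(\xb,\yb)$, and in fact formulas \eqref{eq-46}--\eqref{eq-47} additionally express $\Sp F$ and $\Sp^*F$ at $(\xb,\yb)$ in terms of $\Sp f$ and $\Sp^*f$. I do not anticipate a real obstacle; the only point deserving a word of care is the purely local nature of $\Phi$ in Definition \ref{DefGraphLip} against the global formulation of \eqref{EqImplF}, which is harmless as noted. (One could instead argue directly that the diffeomorphism $\Phi$ carries the set of differentiability points of $f$ --- dense in $\gph f$ by Rademacher's theorem --- onto a dense subset of $\gph F\cap W$ on which $F$ is graphically smooth of dimension $n$, and then invoke Lemma \ref{LemSCDproperty}; but this essentially re-derives the relevant part of Theorem \ref{ThCalc1}.)
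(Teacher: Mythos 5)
Your proposal is correct and follows essentially the same route as the paper: rewrite $\gph F$ near $(\xb,\yb)$ as $\{(x,y)\mv \Phi(x,y)\in\gph Q\}$ with $Q$ built from the Lipschitz map $f$ (the paper takes $Q(u)=\{f(u)\}$ on $U$ and $\emptyset$ elsewhere), obtain the SCD property of $Q$ at $\Phi(\xb,\yb)$ from Lemma \ref{LemSCDSingleValued}, and conclude via Theorem \ref{ThCalc1}. Your explicit handling of the merely local definition of $\Phi$ and of the full-rank (nonsingularity) hypothesis only spells out points the paper leaves implicit.
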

\begin{proof}
  Let $\Phi$, $W$, $U$ and $f$ be as in Definition \ref{DefGraphLip} and observe that $\gph F\cap W=\{(x,y)\mv \Phi(x,y)\in \gph Q\}$, where
  \[Q(u):=\begin{cases}
    \{f(u)\}&\mbox{if $u\in U$},\\
    \emptyset&\mbox{else.}
  \end{cases}\]
  By Lemma \ref{LemSCDSingleValued}, $Q$ has the SCD property at $\big(\bar u,f(\bar u)\big):=\Phi(\xb,\yb)$ and the statement follows from Theorem \ref{ThCalc1}.
\end{proof}
Let us now provide a calculus rule for the outer limiting tangent cone.
\begin{proposition}\label{PropCalcTsharp}
  Let $\Phi:\R^n\to\R^m$ be continuously differentiable, let $A\subset\R^m$ be a closed set and consider
  \[C:=\{x\in\R^n \mv \Phi(x)\in A\}.\]
  Then for any $\xb\in C$ there holds
  \begin{equation}\label{EqCalcTsharp}
    T^\sharp_C(\xb)\subset\{u\mv \nabla \Phi(\xb)u\in T^\sharp_A\big(\Phi(\xb)\big)\}
  \end{equation}
  If $\nabla \Phi(\xb)$ has full row rank $m$ then this inclusion holds with equality.
\end{proposition}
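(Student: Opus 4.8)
The plan is to establish the inclusion \eqref{EqCalcTsharp} directly from the definition of the outer limiting tangent cone, using the classical tangent cone chain rule to handle the inner limit, and then to argue that the full-row-rank assumption upgrades the inclusion to an equality by producing a local converse to that chain rule.

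First I would unfold the definition: $u\in T^\sharp_C(\xb)$ means there are sequences $x_k\setto{C}\xb$ and $u_k\to u$ with $u_k\in T_C(x_k)$. For the inclusion ``$\subset$'', I want to push each $T_C(x_k)$ forward through $\nabla\Phi(x_k)$. Here one must be slightly careful: the exact chain rule $T_C(x)=\{w\mv \nabla\Phi(x)w\in T_A(\Phi(x))\}$ requires a constraint qualification, but the inclusion $\nabla\Phi(x)T_C(x)\subset T_A(\Phi(x))$ holds unconditionally (it is immediate from differentiability: if $w=\lim (x'_j-x)/t_j$ with $x'_j\in C$, then $\Phi(x'_j)\in A$ and $(\Phi(x'_j)-\Phi(x))/t_j\to\nabla\Phi(x)w$). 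Applying this with $x=x_k$ gives $\nabla\Phi(x_k)u_k\in T_A(\Phi(x_k))$. Since $\Phi(x_k)\to\Phi(\xb)$ with $\Phi(x_k)\in A$, and $\nabla\Phi(x_k)u_k\to\nabla\Phi(\xb)u$ by continuous differentiability, the definition of $T^\sharp_A$ yields $\nabla\Phi(\xb)u\in T^\sharp_A(\Phi(\xb))$. This proves \eqref{EqCalcTsharp}.

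For the reverse inclusion under the full-row-rank hypothesis, I would first note that $\nabla\Phi(x)$ has full row rank $m$ for all $x$ in a neighborhood $W$ of $\xb$ (rank is lower semicontinuous), so $\Phi$ is metrically regular on $W$ and in particular the exact tangent chain rule $T_C(x)=\{w\mv \nabla\Phi(x)w\in T_A(\Phi(x))\}$ holds for every $x\in C\cap W$ by \cite[Exercise 6.7]{RoWe98}. Now take $u$ with $q:=\nabla\Phi(\xb)u\in T^\sharp_A(\Phi(\xb))$, so there are $z_k\setto{A}\Phi(\xb)$ and $q_k\to q$ with $q_k\in T_A(z_k)$. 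Using metric regularity of $\Phi$ (or simply a right inverse of the surjection $\nabla\Phi$, available smoothly by the implicit function theorem), I can find $x_k\in C\cap W$ with $\Phi(x_k)=z_k$ and $x_k\to\xb$, and also select $u_k\to u$ with $\nabla\Phi(x_k)u_k=q_k$ — e.g.\ take $u_k = R(x_k)q_k + P_k u$ where $R(x)=\nabla\Phi(x)^T(\nabla\Phi(x)\nabla\Phi(x)^T)^{-1}$ is the (smoothly varying) right inverse and $P_k$ projects onto $\ker\nabla\Phi(x_k)$; since $R(x_k)\to R(\xb)$, $q_k\to q=\nabla\Phi(\xb)u$ and $P_k\to P$, one gets $u_k\to R(\xb)\nabla\Phi(\xb)u+Pu=u$. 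Then $u_k\in T_C(x_k)$ by the exact chain rule, hence $u\in T^\sharp_C(\xb)$.

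\textbf{Main obstacle.} The inclusion direction is routine; the delicate point is the converse, specifically arranging the preimage sequence $x_k$ and the corrector directions $u_k$ to converge to $\xb$ and $u$ \emph{simultaneously and consistently} (i.e.\ $x_k\in C$, $\nabla\Phi(x_k)u_k=q_k$, $u_k\to u$ all at once). This is exactly where full row rank is used twice — once to lift $z_k$ to $x_k$ via metric regularity, and once to lift $q_k$ to $u_k$ via a continuous right inverse — and where one must make sure the chosen right inverse is continuous in $x$ so that the limits behave. Everything else is a direct unwinding of the Painlevé–Kuratowski outer-limit definitions.
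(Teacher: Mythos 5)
Your proposal is correct and follows essentially the same route as the paper's proof: the inclusion is obtained by pushing tangents forward through $\nabla\Phi(x_k)$ and passing to the outer limit, and the converse under full row rank uses metric regularity to lift the points $z_k$ to $x_k\in C$, the exact tangent chain rule of \cite[Exercise 6.7]{RoWe98} on a neighborhood where the rank persists, and the continuously varying pseudo-inverse $\nabla\Phi(x)^T\big(\nabla\Phi(x)\nabla\Phi(x)^T\big)^{-1}$ to build correctors $u_k\to u$ with $\nabla\Phi(x_k)u_k=q_k$ — exactly the construction in the paper. The only cosmetic difference is that you verify the unconditional inclusion $\nabla\Phi(x)T_C(x)\subset T_A\big(\Phi(x)\big)$ by hand where the paper cites \cite[Theorem 6.31]{RoWe98}.
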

\begin{proof}
  By \cite[Theorem 6.31]{RoWe98}, for any $x\in C$ there holds the inclusion
  \begin{equation}
    \label{EqAuxInclTanCone}T_C(x)\subset\{u\mv\nabla\Phi(x)u\in T_A\big(\Phi(x)\big)\}.
  \end{equation}
  Consider $u\in T^\sharp_C(\xb)$ together with sequences $(x_k,u_k)\longsetto{\gph T_C}(\xb,u)$. Then \[(\Phi(x_k),\nabla\Phi(x_k)u_k)\to(\Phi(\xb),\nabla \Phi(\xb)u)\]
  and $\nabla \Phi(x_k)u_k\in T_A\big(\Phi(x_k)\big)$ verifying $\nabla\Phi(\xb)u\in T^\sharp_A\big(\Phi(\xb)\big)$. This proves \eqref{EqCalcTsharp}.
  Now assume that $\nabla\Phi(\xb)$ has full row rank. Then $\Phi$ is metrically regular with some constant $\kappa$ around $\big(\xb,\Phi(\xb)\big)$, see, e.g., \cite[Example 9.44]{RoWe98}. In addition, we can find a neighborhood $U$ of $\xb$ such that $\nabla\Phi(x)$ has full row rank for every $x\in U$ and we conclude from \cite[Exercise 6.7]{RoWe98} that inclusion \eqref{EqAuxInclTanCone} holds with equality for every $x\in U$. Consider $v\in T_A^\sharp\big(\Phi(\xb)\big)$ together with sequences $(y_k,v_k)\longsetto{\gph T_A}(\Phi(\xb),v)$. By metric regularity of $\Phi$, for every $k$ sufficiently large we can find $x_k\in\Phi^{-1}(y_k)$ with $\norm{x_k-\xb}\leq\kappa\norm{y_k-\Phi(\xb)}$ so that $x_k\to\xb$ and $x_k\in U$. Consider $u\in\R^n$ with $\nabla \Phi(\xb)u=v$. For the pseudo-inverse $\nabla\Phi(\xb)^\dag:=\nabla\Phi(\xb)^T\big(\nabla \Phi(\xb)\nabla\Phi(\xb)^T\big)^{-1}$ there holds
   \[u=\nabla\Phi(\xb)^\dag v+\big(I-\nabla\Phi^(\xb)^\dag\nabla\Phi(\xb)\big)u.\]
    Since the pseudo-inverses $\nabla\Phi(x_k)^\dag$ converge to $\nabla \Phi(\xb)^\dag$, we conclude that the sequence
    \[u_k:=\nabla\Phi(x_k)^\dag v_k+\big(I-\nabla \Phi(x_k)^\dag\nabla\Phi(x_k)\big)u\]
     converges to $u$.  Further, since $\nabla \Phi(x_k)u_k=v_k\in T_A\big(\Phi(x_k)\big)$, we have $u_k\in T_C(x_k)$ and $u\in T^\sharp_C(\xb)$ follows. This justifies the inclusion $T^\sharp_C(\xb)\supset\{u\mv \nabla\Phi(\xb)u\in T^\sharp_A\big(\Phi(\xb)\big)\}$ and the proof of the proposition is complete.
\end{proof}

The next calculus rule is essential for the main stability result presented in the fifth section.

Let us consider the situation when $F:\R^n\tto\R^l\times\R^k$ is given via
\begin{equation}\label{EqF}
F(x):= \myvec{G(x)\\H(x)}
\end{equation}
where $G:\R^n\to\R^l$ is a $C^{1}$ function and $H: \R^n\tto\R^k$ has a closed graph.

\begin{proposition}\label{PropCalc1}
Consider $(\xb,\zb)\in\gph H$. Then for the mapping $F$ given by \eqref{EqF} one has:
\begin{enumerate}
\item[(i)]
\begin{align}\label{EqTsharpF}
  T^\sharp_{\gph F}\big(\xb,(G(\xb),\zb)\big)=\big\{\big(u,(\nabla G(\xb)u,w)\big)\mv (u,w)\in T^\sharp_{\gph H}(\xb,\zb)\}
\end{align}
\item[(ii)] If $H$ is \ssstar at $(\xb,\zb)$, then $F$ is \ssstar at $\big(\xb,(G(\xb),\zb)\big)$.
\item[(iii)] Assume that $H$ has the SCD property at $(\xb,\zb)$. Then $F$ has the SCD property at $\big(\xb, (G(\xb),\zb)\big)$ and one has that
\begin{gather}\label{eq-48}
\Sp F\big(\xb, (G(\xb),\zb)\big)=\Big\{\big\{\big(u,(\nabla G(\xb)u,w)\big)\mv (u,w)\in M\big\}\bmv M \in \Sp H(\xb,\zb)\Big\},\\
\label{eq-49}\Sp^* F\big(\xb,(G(\xb),\zb)\big)=\Big\{\{\big((q^*,w^*),\nabla G(\xb)^Tq^*+u^*\big)\mv q^*\in\R^l,\ (w^*,u^*)\in M^*\}\bmv M^*\in\Sp^*H(\xb,\zb)\Big\}.
\end{gather}
\end{enumerate}
\end{proposition}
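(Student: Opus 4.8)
The plan is to realize $F$ from \eqref{EqF} as an implicitly defined mapping of the type treated in Theorem \ref{ThCalc1}, and then read off all three statements from the structural description of $\gph F$. Concretely, set $m:=l+k$ and define $\Phi:\R^n\times(\R^l\times\R^k)\to(\R^n\times\R^l)\times(\R^k)$ by $\Phi\big(x,(p,z)\big):=\big((x,p-G(x)),z\big)$, and let $Q:\R^n\times\R^l\tto\R^k$ be $Q(x,p):=H(x)$ if $p=0$ and $Q(x,p):=\emptyset$ otherwise, so that $\gph Q=\{\big((x,0),z\big)\mv (x,z)\in\gph H\}$. Then $(x,(p,z))\in\gph F$ iff $p=G(x)$ and $z\in H(x)$ iff $\Phi\big(x,(p,z)\big)\in\gph Q$, which is exactly the form \eqref{EqImplF} (with the roles of the two factors $\R^l$ and $\R^k$ matching the decomposition $\R^l\times\R^m$ of the paper after relabeling). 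The Jacobian $\nabla\Phi\big(\xb,(G(\xb),\zb)\big)$ sends $\big(u,(r,w)\big)\mapsto\big((u,r-\nabla G(\xb)u),w\big)$, which is clearly surjective (indeed bijective), so the full-row-rank hypothesis of Theorem \ref{ThCalc1} holds automatically.

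First I would prove (i): apply \cite[Exercise 6.7]{RoWe98} (or directly the equality case of \eqref{EqAuxInclTanCone}-type reasoning, valid since $\nabla\Phi$ is surjective everywhere near the reference point) together with a passage to the outer limit exactly as in Proposition \ref{PropCalcTsharp}, using that $\Phi$ is a local diffeomorphism so the limit over $\gph T_{\gph F}$ transports to a limit over $\gph T_{\gph H}$ bijectively. This yields $T^\sharp_{\gph F}\big(\xb,(G(\xb),\zb)\big)=\{w'\mv\nabla\Phi(\cdot)w'\in T^\sharp_{\gph Q}(\cdot)\}$, and since $T^\sharp_{\gph Q}=\{\big((u,0),w\big)\mv(u,w)\in T^\sharp_{\gph H}(\xb,\zb)\}$ (the constraint $p=0$ passes to the outer limiting tangent cone because it is a fixed linear subspace, cf. the product rule used in Proposition \ref{PropCalcTsharp}), solving $r-\nabla G(\xb)u=0$ gives precisely \eqref{EqTsharpF}. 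For (ii), I would invoke Proposition \ref{PropSSstarNew}: $\gph F=\{w'\mv\Phi(w')\in\gph Q\}$ with $\Phi$ a diffeomorphism, hence metrically subregular (even regular) at the reference point, and $\gph Q$ is \ssstar at $\Phi\big(\xb,(G(\xb),\zb)\big)$ because it is a diffeomorphic (indeed linear-graph) image of $\gph H$ which is \ssstar at $(\xb,\zb)$ by hypothesis — more carefully, $\gph Q=\psi(\gph H)$ for the linear injection $\psi:(x,z)\mapsto\big((x,0),z\big)$, and the \ssstar property is preserved under such maps, or one applies Proposition \ref{PropSSstarNew} a second time with $\gph H$ in the role of $D$. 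For (iii), I would apply Theorem \ref{ThCalc1} directly: $\Sp Q\big((\xb,0),\zb\big)=\{\{\big((u,0),w\big)\mv(u,w)\in M\}\mv M\in\Sp H(\xb,\zb)\}$ (again because the $p=0$ slot is a fixed subspace, so graphical smoothness of $Q$ at $\big((x,0),z\big)$ is equivalent to that of $H$ at $(x,z)$, with the tangent subspaces related by the same formula); then \eqref{eq-46} gives $\Sp F$ via $L\mapsto\nabla\Phi(\cdot)L\in\Sp Q(\cdot)$, and substituting the description of $\nabla\Phi$ and of $\Sp Q$ yields \eqref{eq-48}. Finally \eqref{eq-49} follows either from \eqref{eq-47} with $S_{nm}\nabla\Phi(\cdot)^TS^T_{lm}$ computed explicitly, or more transparently from \eqref{eq-48} by taking orthogonal complements: if $M^*\in\Sp^*H(\xb,\zb)$ corresponds to $M\in\Sp H(\xb,\zb)$ via \eqref{eq-31a}, then the annihilator of $\{\big(u,(\nabla G(\xb)u,w)\big)\mv(u,w)\in M\}$ computes to the set appearing on the right-hand side of \eqref{eq-49}, a routine linear-algebra identity using the block structure.

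The main obstacle I anticipate is bookkeeping with the $S_{nm}$-type sign/permutation matrices and the ordering of the three coordinate blocks $\R^n$, $\R^l$, $\R^k$ when invoking Theorem \ref{ThCalc1}, whose statement is written for a two-block target $\R^l\times\R^m$; one must be careful that the "extra" $\R^l$ factor coming from $p-G(x)$ is grouped correctly so that $\Sp^*Q$ lands in $\Z_{(n+l),k}$-type spaces and the adjoint formula \eqref{eq-47} produces exactly the transpose pattern $q^*\mapsto\nabla G(\xb)^Tq^*+u^*$ rather than some permuted variant. A clean way to sidestep most of this is to avoid \eqref{eq-47} altogether and instead derive \eqref{eq-49} from \eqref{eq-48} via \eqref{eq-35} and a direct orthogonal-complement computation, which reduces everything to one explicit (but elementary) linear-algebra lemma: $\big\{\big(u,(\nabla G(\xb)u,w)\big)\mv(u,w)\in M\big\}^\perp=\big\{\big((\,\cdot\,)\big)\big\}$ matching the adjoint of the right-hand side of \eqref{eq-49}. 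The rest is a routine transfer of limits through the diffeomorphism $\Phi$, entirely parallel to the proofs of Theorem \ref{ThCalc1} and Proposition \ref{PropCalcTsharp} already in the paper.
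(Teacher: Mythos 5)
Parts (i) and (ii) of your plan are sound and essentially coincide with the paper's argument (the paper also passes through a diffeomorphism $\Phi(x,p,z)=(x,p-G(x),z)$, uses Proposition \ref{PropCalcTsharp} for (i) and Proposition \ref{PropSSstarNew} for (ii)). The genuine gap is in part (iii): your auxiliary mapping $Q:\R^n\times\R^l\tto\R^k$, $Q(x,p)=H(x)$ if $p=0$ and $\emptyset$ otherwise, does \emph{not} have the SCD property (for $l\geq 1$), and your claimed formula for $\Sp Q$ is false. The graph of your $Q$ is just a copy of $\gph H$ sitting inside $\R^{n+l+k}$, so at points where $H$ is graphically smooth the tangent space $\{((u,0),w)\mv (u,w)\in T_{\gph H}(x,z)\}$ has dimension $n$; but graphical smoothness of a mapping $\R^{n+l}\tto\R^k$ requires tangent subspaces of dimension $n+l$. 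Hence $\OO_Q=\emptyset$ and $\Sp Q\big((\xb,0),\zb\big)=\emptyset$, so the hypothesis of Theorem \ref{ThCalc1} ("$Q$ has the SCD property at $\Phi(\xb,\yb)$") fails. Moreover, the structural form \eqref{EqImplF} is not met either: Theorem \ref{ThCalc1} requires $\Phi:\R^n\times\R^m\to\R^{l}\times\R^m$ with $Q$ and $F$ sharing the \emph{same} target space $\R^m$, whereas your $F$ has target $\R^{l+k}$ and your $Q$ has target $\R^k$; if instead you reinterpret the common graph as that of a mapping $\R^{n+l}\tto\R^k$ to match your grouping, you end up computing SCD data of that reinterpreted mapping (which again has none), not the sets $\Sp F,\Sp^*F\subset\Z_{n,l+k},\Z_{l+k,n}$ demanded by \eqref{eq-48}--\eqref{eq-49}. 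The SCD machinery is sensitive to how $\R^{n+l+k}$ is split into domain and range, and your split puts the auxiliary zero constraint on the wrong side.

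The repair is exactly the paper's choice: keep the source dimension equal to $n$ by placing the zero block in the \emph{image}, i.e.\ take $\tilde H:\R^n\tto\R^l\times\R^k$, $\tilde H(x)=\{0\}\times H(x)$, so that $\gph F=\{(x,p,z)\mv \Phi(x,p,z)\in\gph\tilde H\}$ with your same $\Phi$. Then $\tilde H$ inherits the SCD (and \ssstar) property from $H$, with $\Sp\tilde H\big(\xb,(0,\zb)\big)=\big\{\{(u,(0,w))\mv (u,w)\in M\}\mv M\in\Sp H(\xb,\zb)\big\}$, and Theorem \ref{ThCalc1} applies with $m=l+k$, $l=n$, giving \eqref{eq-48} and \eqref{eq-49} after computing $\nabla\Phi^{-1}$ and $S_{n(l+k)}\nabla\Phi^TS_{n(l+k)}^T$. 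Your alternative route to \eqref{eq-49} --- deriving it from \eqref{eq-48} via \eqref{eq-35} by an explicit orthogonal-complement computation --- is legitimate and would indeed let you bypass the $S$-matrix bookkeeping, but only once \eqref{eq-48} has been obtained with the correct decomposition.
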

\begin{proof}
  Let $\tilde H:\R^n\tto\R^l\times\R^k$ be given by $\tilde H(x)=\{0\}\times H(x)$. Then
  \[\gph F=\{(x,p,z)\mv \Phi(x,p,z)\in\gph\tilde H\},\]
  where $\Phi:\R^n\times\R^l\times\R^k\to \R^n\times\R^l\times\R^k$ is given by $\Phi(x,p,z) = (x,p-G(x),z)^T$.
  Note that for every triple $(x,p,z)$ the Jacobian
  \[\nabla \Phi(x,p,z)=\left(\begin{matrix}
    I_n&0&0\\-\nabla G(x)&I_l&0\\0&0&I_k
  \end{matrix}\right)\]
   is nonsingular.

     Ad (i): Obviously there holds  $T^\sharp_{\gph \tilde H}\big(\xb,(0,\zb)\big)=\{\big(u,(0,w)\big)\mv (u,w)\in T^\sharp_{\gph H}(\xb,\zb)\}$. Thus we obtain from Proposition \ref{PropCalcTsharp}
     that
     \begin{align*}T^\sharp_{\gph F}\big(\xb,(G(\xb),\zb)\big)&=\Big\{\big(u,(q,w)\big)\bmv \nabla\Phi(\xb,G(\xb),\zb)(u,q,w)\in T^\sharp_{\gph \tilde H}(\xb,0,\zb)\Big\}\\
     &=\Big\{\big(u,(q,w)\big)\mv (u,w)\in T^\sharp_{\gph H}(\xb,\zb),\ q-\nabla G(\xb)u=0\},\end{align*}
     yielding \eqref{EqTsharpF}.

   Ad (ii): Since $H$ is \ssstar at $(\xb,\zb)$, $\tilde H$ is \ssstar at $\big(\xb,(0,\zb)\big)$. Surjectivity of $\nabla\Phi(\xb, G(\xb),\zb)$ ensures that the mapping $\Phi(\cdot)-\gph\tilde H$ is metrically regular around $\big(\xb,(G(\xb),\zb)\big)$, cf. \cite[Example 9.44]{RoWe98} and therefore metrically subregular as well. Now the claimed statement follows from Proposition \ref{PropSSstarNew}.

   Ad (iii): It is easy to see that $\tilde H$ has the SCD property at $\big(\xb,(0,\zb)\big)$ with
  \begin{gather*}\Sp \tilde H\big(\xb,(0,\zb)\big)=\Big\{\{\big(u,(0,w)\big)\mv (u,w)\in M\}\bmv M\in \Sp H(\xb,\zb)\Big\},\\
  \Sp^* \tilde H\big(\xb,(0,\zb)\big)=\Big\{\{\big((q^*,w^*),u^*\big)\mv q^*\in\R^l,\ (w^*,u^*)\in M^*\}\bmv M^*\in\Sp^*H(\xb,\zb)\Big\}.
  \end{gather*}
  Next we can apply Theorem \ref{ThCalc1} to obtain
   \begin{gather*}
     \Sp F\big(\xb,(G(\xb),\zb)\big)=\{\nabla\Phi(\xb,G(\xb),\zb)^{-1}\tilde M\mv \tilde M\in \Sp \tilde H\big(\xb,(0,\zb)\big)\},\\
     \Sp^* F\big(\xb,(G(\xb),\zb)\big)=\{S_{n(l+k)}\nabla \Phi(\xb,G(\xb),\zb)^TS_{n(l+k)}^T\tilde M^*\mv \tilde M^*\in\Sp^*\tilde H\big(\xb,(0,\zb)\big)\}.
   \end{gather*}
    Straightforward calculations yield that
   \[\nabla \Phi(\xb,G(\xb),\zb)^{-1}=\left(\begin{matrix}I_n&0&0\\\nabla G(\xb)&I_l&0\\0&0&I_k\end{matrix}\right),\
    S_{n(l+k)}\nabla \Phi(\xb,G(\xb),\zb)^TS_{n(l+k)}^T=\left(\begin{matrix}
      I_l&0&0\\0&I_k&0\\\nabla G(\xb)^T&0&I_n
    \end{matrix}\right)\]
    and formulas \eqref{eq-48}, \eqref{eq-49} follow.
\end{proof}

\section{Isolated calmness on a neighborhood of implicit multifunctions}

Given a multifunction $H: \mathbb{R}^{l} \times \mathbb{R}^{k}\rightrightarrows\mathbb{R}^{k}$ with closed graph and a point $\big((\bar{x},\bar{y}),\bar{z}\big)\in
 \gph H$, then the relation
 \begin{equation}\label{eq-51}
\gph \Sigma = H^{-1}(\bar{z})
 \end{equation}
defines the so-called {\em implicit multifunction} $\Sigma:\mathbb{R}^{l}\rightrightarrows\mathbb{R}^{k}$. Our aim is now to ensure a certain stability property of $\Sigma$ around $(\bar{x},\bar{y})$ by imposing suitable assumptions on $H$ around $(\bar{x},\bar{y},\bar{z})$. Usually one puts $\bar{z}=0$ so that
\begin{equation}\label{eq-52}
\gph \Sigma = \{(x,y)\mv 0 \in H(x,y)\}.
\end{equation}
It is easy to see that any stability property of $\Sigma$ around $(\bar{x},\bar{y})$ is inherited by the same stability property of the inverse to the ``extended'' mapping $F:
\mathbb{R}^{l+k}\rightrightarrows\mathbb{R}^{l+k}$ given by
\begin{equation}\label{eq-53}
F(x,y)= \myvec{x\\ H(x,y)}
\end{equation}
around $\big((\bar{x},\bar{y}),(\bar{x},0)\big)$. In fact, in this way, e.g., the classical Implicit Function Theorem or the Clarke Implicit Function Theorem have been proved. Alternatively, one can combine a suitable characterization of the examined property in terms of a generalized derivative with the available calculus, as shown, e.g., in \cite[Section 4.3]{M06} or \cite[Section 4]{GfrOut16} in case of the Aubin property. In our approach we will use the mapping (\ref{eq-53}) along with Theorem \ref{ThCharRegByDer}(iii) and Corollary \ref{CorStrMetrSubreg}.

\begin{theorem}\label{Th5.1}
Consider the inclusion $0 \in H(x,y)$ and a point $\big((\bar{x},\bar{y}) ,0\big)\in \gph H$. Then any of the following two conditions ensures the isolated calmness property of the respective implicit solution map $\Sigma$ around $(\xb,\yb)$.
\begin{enumerate}
\item[(i)]
\begin{align}\label{EqIsolHSharp}
  0\in D^\sharp H\big((\xb,\yb),0\big)(0,v)\ \Rightarrow\ v=0.
\end{align}
\item[(ii)] The mapping $H$ has both the SCD property and the \ssstar property around $\big((\xb,\yb),0\big)$ and either the implication
\begin{equation}\label{eq-54}
\big((0,v),0\big)\in L\ \Rightarrow\ v=0\end{equation}
holds for all $L \in  \mathcal{S}H \big((\bar{x},\bar{y}),0\big)$, or, equivalently, the implication
\begin{equation}\label{eq-55}
\big(w^*,(u^*,0)\big)\in L^*\ \Rightarrow\ w^*=0,\ u^*=0
\end{equation}
holds for all $L^{*}\in \mathcal{S}^{*}H \big((\bar{x},\bar{y}),0\big)$.
\end{enumerate}
\end{theorem}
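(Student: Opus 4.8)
The plan is to reduce both statements to known results about the ``extended'' mapping $F$ defined in \eqref{eq-53}, namely $F(x,y)=(x,H(x,y))^T$, for which $F^{-1}\big((\xb,0)\big)=\{(x,y)\mv 0\in H(x,y),\ x=\xb\}$, so that isolated calmness of $\Sigma$ around $(\xb,\yb)$ will follow from strong metric subregularity of $F$ around $\big((\xb,\yb),(\xb,0)\big)$. More precisely, since $\Sigma(x)=\{y\mv 0\in H(x,y)\}$, a short argument shows that $\Sigma$ is isolatedly calm around $(\xb,\yb)$ once $F^{-1}$ is isolatedly calm around $\big((\xb,0),(\xb,\yb)\big)$: a point $(x,y)\in\gph\Sigma$ near $(\xb,\yb)$ gives $\big((x,0),(x,y)\big)\in\gph F^{-1}$, and the isolated-calmness estimate for $F^{-1}$ with perturbation $(x,0)\mapsto(x',0)$ translates directly into the required estimate for $\Sigma$. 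By Theorem \ref{ThCharRegByDer}(iii) (equivalently, the Lemma relating strong metric subregularity of $F$ and isolated calmness of $F^{-1}$), it therefore suffices to verify \eqref{EqLocStrSubregCrit1}, i.e.
\[
(\omega,0)\in\gph D^\sharp F\big((\xb,\yb),(\xb,0)\big)\ \Rightarrow\ \omega=0,
\]
for part (i), and to verify SCD regularity of $F$ together with its \ssstar property for part (ii).

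For part (i), I would apply Proposition \ref{PropCalc1}(i) with $G(x,y):=x$ (a $C^1$ map $\R^{l+k}\to\R^l$) and the multifunction $H$ in the role of the ``$H$'' there, obtaining
\[
T^\sharp_{\gph F}\big((\xb,\yb),(\xb,0)\big)=\big\{\big((u,v),(u,w)\big)\mv (u,v,w)\in T^\sharp_{\gph H}\big((\xb,\yb),0\big)\big\},
\]
that is, $(u,w)\in D^\sharp F\big((\xb,\yb),(\xb,0)\big)(u,v)$ iff $w\in D^\sharp H\big((\xb,\yb),0\big)(u,v)$. Hence $\big((u,v),0\big)$ lies in $\gph D^\sharp F$ exactly when $u=0$ and $0\in D^\sharp H\big((\xb,\yb),0\big)(0,v)$; condition \eqref{EqIsolHSharp} forces $v=0$, so $\omega=(u,v)=0$, and \eqref{EqLocStrSubregCrit1} holds. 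This gives isolated calmness on a neighborhood of $\Sigma$.

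For part (ii), I would use Proposition \ref{PropCalc1}(ii),(iii) again with $G(x,y)=x$: since $H$ is \ssstar and has the SCD property around $\big((\xb,\yb),0\big)$, the mapping $F$ is \ssstar and has the SCD property around $\big((\xb,\yb),(\xb,0)\big)$ as well, with $\Sp F$ described by \eqref{eq-48}. It remains to check SCD regularity of $F$, i.e.\ implication \eqref{eq-41}: for every $\tilde L\in\Sp F\big((\xb,\yb),(\xb,0)\big)$ one must have $\big((u,v),0\big)\in\tilde L\Rightarrow (u,v)=0$. By \eqref{eq-48}, $\tilde L=\{\big((u,v),(u,w)\big)\mv (u,v,w)\in L\}$ for some $L\in\Sp H\big((\xb,\yb),0\big)$, so $\big((u,v),(u,w)\big)\in\tilde L$ with $(u,w)=0$ means $u=0$ and $\big((0,v),0\big)\in L$; assumption \eqref{eq-54} then yields $v=0$. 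Thus $F$ is SCD regular around $\big((\xb,\yb),(\xb,0)\big)$, and Corollary \ref{CorStrMetrSubreg} (or Theorem \ref{ThStrMetrSubreg}) gives that $F$ is strongly metrically subregular around that point, hence $F^{-1}$ — and therefore $\Sigma$ — is isolatedly calm around the reference point. The equivalence of \eqref{eq-54} and \eqref{eq-55} is immediate from \eqref{eq-35} and the definition \eqref{eq-31a} of the adjoint subspace, since $\big((0,v),0\big)\in L$ iff the orthogonal complement $L^\perp$ contains all $\big((w^*,u^*),0\big)$ pairing to zero with such elements, which after applying $S_{nm}$ is exactly the condition in \eqref{eq-55}.

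The main obstacle is the bookkeeping in the first reduction step — carefully checking that isolated calmness of $\Sigma$ around $(\xb,\yb)$ really is implied by (and not merely analogous to) strong metric subregularity of $F$ on a neighborhood, including that the ``around'' quantifiers match up, i.e.\ that the estimate holds at every nearby graph point of $\Sigma$ with a uniform constant. Everything after that is a direct application of the calculus rules in Section 4, so the only real content beyond routine substitution is making the block-matrix identifications in Proposition \ref{PropCalc1} line up with the particular choice $G(x,y)=x$.
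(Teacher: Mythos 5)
Your proposal is correct and follows essentially the same route as the paper: reduce to the extended mapping $F(x,y)=\big(x,H(x,y)\big)$, compute $D^\sharp F$ and $\Sp F$, $\Sp^*F$ via Proposition \ref{PropCalc1} with $G(x,y)=x$, and conclude via Theorem \ref{ThCharRegByDer}(iii) resp.\ Corollary \ref{CorStrMetrSubreg}. The only cosmetic differences are that the paper obtains the equivalence of \eqref{eq-54} and \eqref{eq-55} by translating both \eqref{eq-41} and \eqref{eq-40} through the explicit formula for $\Sp^*F$ (rather than your somewhat loose orthogonality remark, which in effect relies on the same cited equivalence of \eqref{eq-40} and \eqref{eq-41}), and that the transfer of isolated calmness from $F^{-1}$ to $\Sigma$ is treated in the paper as the ``easy to see'' observation preceding the theorem, which you spell out explicitly.
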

\begin{proof}
In the first case we conclude from Proposition \ref{PropCalc1}(i) that the mapping $F$ given by (\ref{eq-53}) fulfills
\[D^\sharp F\big((\xb,\yb),(\xb,0)\big)(u,v)=\{(u,w)\mv w\in D^\sharp H\big((\xb,\yb),0\big)(u,v)\}.\]
Thus it follows from Theorem \ref{ThCharRegByDer}(iii) that condition \eqref{EqIsolHSharp} is equivalent with strong metric subregularity of $F$ around $\big((\xb,\yb),(\xb,0)\big)$ and the claimed isolated calmness of $\Sigma$ around $(\xb,\yb)$ follows.

In the second case, note that by Proposition \ref{PropCalc1}(ii), (iii) the mapping $F$ has the SCD property around $(\xb,\yb)$ and is \ssstar around $(\xb,\yb)$. Further we have
\begin{gather*}
  \Sp F\big((\xb,\yb),(0,0)\big)=\Big\{\{\big((u,v),(u,w)\big)\mv \big((u,v),w\big)\in L\}\bmv L\in\Sp H\big((\xb,\yb),0\big)\Big\},\\
  \Sp^*F\big((\xb,\yb),(0,0)\big)=\Big\{\{\big((q^*,w^*),(q^*+u^*,v^*)\big)\mv q^*\in\R^l,\ \big(w^*,(u^*,v^*)\big)\in L^*\}\bmv L^*\in\Sp^* H\big((\xb,\yb),0\big)\Big\}.
\end{gather*}
Implications \eqref{eq-41} and \eqref{eq-40} now yield conditions \eqref{eq-54}, \eqref{eq-55} which, by  Corollary \ref{CorStrMetrSubreg}, are  equivalent with the strong metric subregularity of $F$ around $\big((\xb,\yb),(\xb,0)\big)$. The proof is complete.
\end{proof}

Theorem \ref{Th5.1} can well be  applied to parameterized GEs. To this aim consider the case when  $H:\mathbb{R}^{l} \times \mathbb{R}^{k}\rightrightarrows\mathbb{R}^{k}$ is given via
\begin{equation}\label{eq-59}
H(x,y):=f(x,y)+Q(x,y),
\end{equation}
where $x\in \mathbb{R}^{l}$ is the {\em perturbation parameter}, $y\in \mathbb{R}^{k} $ is the {\em decision variable}, $f:\mathbb{R}^{l} \times \mathbb{R}^{k}\rightarrow\mathbb{R}^{k}$ is continuously differentiable and $Q:
\mathbb{R}^{l} \times \mathbb{R}^{k}\rightrightarrows\mathbb{R}^{k}$ has a closed graph.

\begin{proposition}\label{PropGE}
Consider the reference point $(\bar{x},\bar{y})\in H^{-1}(0)$ and assume that one of the following conditions hold true:
\begin{enumerate}
\item[(i)]
\begin{align}\label{EqIsolQSharp}
  0\in \nabla_y f(x,y)v+ D^\sharp Q\big((\xb,\yb),-f(\xb,\yb)\big)(0,v)\ \Rightarrow\ v=0.
\end{align}
\item[(ii)] $Q$ has the SCD property around $\big((\xb,\yb), -f(\xb,\yb)\big)$ and is \ssstar on a neighborhood of $\big((\bar{x},\bar{y}),-f(\bar{x},\bar{y})\big)$ and either one of the implications
\begin{equation}\label{eq-71}
\big((0,v),-\nabla_y f(\xb,\yb)v\big)\in M\ \Rightarrow\ v=0 ~\mbox{ for all }~ M \in \Sp Q\big((\bar{x},\bar{y}),-f(\bar{x},\bar{y})\big)
\end{equation}
and
\begin{equation}\label{eq-72}
\big(w^*,(u^*,-\nabla_yf(\xb,\yb)^Tw^*)\big)\in M^*\ \Rightarrow\ w^* = 0 ,\ u^* = 0~\mbox{ for all }~ M^{*} \in \mathcal{S}^{*}Q\big((\bar{x},\bar{y}),-f(\bar{x},\bar{y})\big)
\end{equation}
holds true.
\end{enumerate}
 Then the respective solution mapping $\Sigma:\mathbb{R}^{l}\rightrightarrows\mathbb{R}^{k}$ is isolatedly calm around $(\bar{x},\bar{y})$.
\end{proposition}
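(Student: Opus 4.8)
The plan is to reduce Proposition \ref{PropGE} to Theorem \ref{Th5.1} by viewing the generalized equation $0\in f(x,y)+Q(x,y)$ in the composite form $0\in H(x,y)$ and then rewriting the derivative/subspace conditions of Theorem \ref{Th5.1} in terms of $Q$ and the partial Jacobian $\nabla_y f$. The key observation is that $H=f+Q$ can be expressed via the preimage construction of Theorem \ref{ThCalc1}: setting $\Phi(x,y,z)=(x,y,z+f(x,y))$, which has nonsingular Jacobian $\nabla\Phi(x,y,z)=\left(\begin{smallmatrix} I_l&0&0\\0&I_k&0\\ \nabla_x f&\nabla_y f&I_k\end{smallmatrix}\right)$, one has $\gph H=\{(x,y,z)\mv \Phi(x,y,z)\in\gph Q\}$. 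Hence $H$ inherits the SCD property and the \ssstar property from $Q$ (the latter via Proposition \ref{PropSSstarNew}, using metric regularity of $\Phi(\cdot)-\gph Q$ coming from surjectivity of $\nabla\Phi$), and its generalized derivatives are obtained by applying $\nabla\Phi(\xb,\yb,-f(\xb,\yb))^{-1}$ and its ``adjoint'' $S\,\nabla\Phi^T S^T$ to the corresponding objects for $Q$ at $\Phi(\xb,\yb,-f(\xb,\yb))=(\xb,\yb,0)$, exactly as in the proof of Proposition \ref{PropCalc1}(iii).

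\textbf{Main steps.} First I would record the preimage representation of $\gph H$ and verify the hypotheses of Theorem \ref{ThCalc1} and Proposition \ref{PropSSstarNew} at (and near) the reference point; this gives that in case (ii) the mapping $H$ has the SCD and \ssstar properties around $((\xb,\yb),0)$, so that part (ii) of Theorem \ref{Th5.1} is applicable. Second, I would compute $\nabla\Phi(\xb,\yb,-f(\xb,\yb))^{-1}=\left(\begin{smallmatrix} I_l&0&0\\0&I_k&0\\ -\nabla_x f&-\nabla_y f&I_k\end{smallmatrix}\right)$ and the transformed matrix governing $\Sp^* H$, and substitute into \eqref{eq-46}--\eqref{eq-47} (or into the corresponding tangent-cone rule of Proposition \ref{PropCalcTsharp} for the $D^\sharp$ statement). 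For part (i): by \eqref{EqCalcTsharp} applied to $\Phi$, one gets $(0,v,0)\in\gph D^\sharp H((\xb,\yb),0)$, i.e. $0\in D^\sharp H((\xb,\yb),0)(0,v)$, if and only if $\nabla\Phi(0,v,0)=(0,v,\nabla_y f(\xb,\yb)v)\in T^\sharp_{\gph Q}(\xb,\yb,-f(\xb,\yb))$, which says precisely $-\nabla_y f(\xb,\yb)v\in D^\sharp Q((\xb,\yb),-f(\xb,\yb))(0,v)$, i.e. $0\in\nabla_y f(\xb,\yb)v+D^\sharp Q((\xb,\yb),-f(\xb,\yb))(0,v)$. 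Thus \eqref{EqIsolQSharp} is equivalent to condition \eqref{EqIsolHSharp} of Theorem \ref{Th5.1}(i), and isolated calmness of $\Sigma$ follows. For part (ii): an element $L\in\Sp H((\xb,\yb),0)$ has the form $L=\nabla\Phi^{-1}M$ for $M\in\Sp Q((\xb,\yb),-f(\xb,\yb))$, and $((0,v),0)\in L$ translates into $\nabla\Phi(0,v,0)=(0,v,\nabla_y f(\xb,\yb)v)\in M$, i.e. $((0,v),\nabla_y f(\xb,\yb)v)\in M$; replacing $v$ by $-v$ if convenient, implication \eqref{eq-54} for $L$ becomes exactly \eqref{eq-71} for $M$ (up to the sign in $-\nabla_y f(\xb,\yb)v$, which I would fix by bookkeeping: $(0,v,w)\in M \Leftrightarrow ((0,v),w)\in M$, and $((0,-v),\nabla_y f v)\in M$ is the same subspace condition as $((0,v),-\nabla_y f v)\in M$ since $M$ is a subspace). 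The dual version \eqref{eq-72} follows identically by transforming $\Sp^* H$ via $S\nabla\Phi^T S^T$ and reading off \eqref{eq-55}; equivalence of \eqref{eq-71} and \eqref{eq-72} is the adjoint duality \eqref{eq-35}. Finally, Theorem \ref{Th5.1}(ii) delivers the isolated calmness of $\Sigma$ around $(\xb,\yb)$.

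\textbf{Anticipated obstacle.} The only genuinely delicate point is checking that the hypotheses hold \emph{around} the reference point and not merely \emph{at} it --- specifically, that $Q$ having the SCD property around $((\xb,\yb),-f(\xb,\yb))$ and being \ssstar on a neighborhood transfers, via the $C^1$ change of coordinates $\Phi$, to $H$ having these properties on a full neighborhood of $((\xb,\yb),0)$. This requires noting that $\Phi$ is a global diffeomorphism with $\nabla\Phi$ nonsingular everywhere (not just at the base point), so $\Phi$ maps a neighborhood of $((\xb,\yb),0)$ in $\gph H$ onto a neighborhood of $(\xb,\yb,0)$ in $\gph Q$, and that metric regularity of $\Phi(\cdot)-\gph Q$ holds around each nearby point; the rest is the routine substitution of the explicit matrices into the calculus formulas \eqref{eq-46}--\eqref{eq-49} and \eqref{EqCalcTsharp}, together with careful sign tracking in the $\nabla_y f$ terms.
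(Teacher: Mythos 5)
Your overall route is exactly the paper's: represent $\gph H$ as a preimage of $\gph Q$ under a $C^1$ map $\Phi$ with everywhere nonsingular Jacobian, transfer the SCD and \ssstar properties and the derivative objects via Theorem \ref{ThCalc1}, Proposition \ref{PropCalcTsharp} (and Proposition \ref{PropSSstarNew}), compute $\nabla\Phi^{-1}$ and $S\nabla\Phi^TS^T$ explicitly, and then invoke Theorem \ref{Th5.1}; your attention to the ``around the point'' transfer is also in order.

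There is, however, a concrete sign error that makes two of your displayed steps false as written. For $H=f+Q$ one has $z\in H(x,y)\iff z-f(x,y)\in Q(x,y)$, so the correct transformation is $\Phi\big((x,y),z\big)=\big((x,y),z-f(x,y)\big)$ (as in the paper); your choice $\Phi(x,y,z)=(x,y,z+f(x,y))$ represents $\gph(-f+Q)$, not $\gph H$, so the identity $\gph H=\{(x,y,z)\mv\Phi(x,y,z)\in\gph Q\}$ fails, and moreover the image of the reference point $\big((\xb,\yb),0\big)$ under your $\Phi$ is $\big((\xb,\yb),f(\xb,\yb)\big)$, not the point $\big((\xb,\yb),-f(\xb,\yb)\big)$ at which the hypotheses on $Q$ are imposed. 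In part (ii) this slip is ultimately harmless, since the elements $M\in\Sp Q$ are subspaces and the quantification over all $v$ (resp.\ $w^*$) absorbs the sign flip, as you note. But in part (i) the step ``$\big(0,v,\nabla_y f(\xb,\yb)v\big)\in T^\sharp_{\gph Q}\big((\xb,\yb),-f(\xb,\yb)\big)$ says precisely $-\nabla_y f(\xb,\yb)v\in D^\sharp Q\big((\xb,\yb),-f(\xb,\yb)\big)(0,v)$'' is not correct, and it cannot be repaired by substituting $v\mapsto-v$, because $T^\sharp_{\gph Q}$ is only a cone, not a subspace. With the corrected $\Phi$ one gets $\nabla\Phi\big((\xb,\yb),0\big)\big((0,v),0\big)=\big((0,v),-\nabla_y f(\xb,\yb)v\big)$, the base point for $Q$ comes out as $\big((\xb,\yb),-f(\xb,\yb)\big)$, and the equivalences of \eqref{EqIsolHSharp}, \eqref{eq-54}, \eqref{eq-55} with \eqref{EqIsolQSharp}, \eqref{eq-71}, \eqref{eq-72} follow directly, exactly as in the paper's proof.
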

\begin{proof}
Clearly
\[
\gph H = \{\big((x,y),z\big)\in \mathbb{R}^{l} \times \mathbb{R}^{k} \times \mathbb{R}^{k} \mv \Phi\big((x,y),z\big)\in\gph Q\}\mbox{ with } \Phi\big((x,y),z\big)=\big((x,y),z-f(x,y)\big)^T
\]
so that we can apply Proposition \ref{PropCalcTsharp} and Theorem \ref{ThCalc1} to obtain
\begin{gather*}
T^\sharp_{\gph H}\big((\xb,\yb),0\big)=\nabla\Phi\big((\xb,\yb),0\big)^{-1} T^\sharp_{\gph Q}\big(\Phi\big((\xb,\yb),0\big)\big),\\
\Sp H\big((\xb,\yb),0\big)=\nabla\Phi\big((\xb,\yb),0\big)^{-1} \Sp Q\big(\Phi\big((\xb,\yb),0\big)\big),\\
\Sp^* H\big((\xb,\yb),0\big)= \{L^{*} \in \Z_{k(l+k)} \mv L^{*} = S_{(l+k)k}\nabla\Phi\big((\xb,\yb),0\big)^T S^T_{(l+k)k}M^* \mbox{ with } M^* \in \Sp^{*} Q \big(\Phi\big((\xb,\yb),0\big)\big)\}.
\end{gather*}
Straightforward calculations yield
\[\nabla\Phi\big((\xb,\yb),0\big)^{-1}=\left(\begin{matrix}I_{l+k}&0\\\nabla f(\xb,\yb)&I_k\end{matrix}\right), \quad
S_{(l+k)k}\nabla\Phi\big((\xb,\yb),0\big)^T S^T_{(l+k)k}=\left(\begin{matrix}I_k&0\\\nabla f(\xb,\yb)^T&I_{l+k}\end{matrix}\right)\]
and we arrive at the formulas
\begin{align*}
&D^\sharp H\big((\xb,\yb),0\big)(u,v)=\{ \nabla_xf(\xb,\yb)u+\nabla_yf(\xb,\yb)v+w\mv w\in D^\sharp Q\big((\xb,\yb),-f(\xb,\yb)\big)(u,v)\},\\
&\Sp H\big((\xb,\yb),0\big)=\Big\{\{\big((u,v), \nabla_xf(\xb,\yb)u+\nabla_yf(\xb,\yb)v+w\big)\mv \big((u,v),w\big)\in M\}\bmv M\in \Sp Q\big((\xb,\yb),-f(\xb,\yb)\big)\Big\}\end{align*}
and
\begin{align*}
&\lefteqn{\Sp^* H\big((\xb,\yb),0\big)}\\
&=\Big\{\big\{\big(w^*,(\nabla_xf(\xb,\yb)^Tw^*+u^*,\nabla_yf(\xb,\yb)^Tw^*+v^*)\big)\mv (w^*,(u^*,v^*))\in M^*\big\}\bmv M^*\in \Sp^* Q\big((\xb,\yb),-f(\xb,\yb)\big)\Big\}.
\end{align*}
Conditions \eqref{EqIsolHSharp}, \eqref{eq-54}, \eqref{eq-55} thus read as
\begin{gather*}
\left.\begin{array}{l}u=0,\ \nabla_xf(\xb,\yb)u+\nabla_yf(\xb,\yb)v+w=0\\
w\in D^\sharp Q\big((\xb,\yb),-f(\xb,\yb)\big)(u,v) \end{array}\right\}\Rightarrow v=0,\\
\left.\begin{array}{l}u=0,\ \nabla_xf(\xb,\yb)u+\nabla_yf(\xb,\yb)v+w=0\\
\big((u,v),w\big)\in M \end{array}\right\}\Rightarrow v=0,\ M\in\Sp Q\big((\xb,\yb),-f(\xb,\yb)\big),\\
\left.\begin{array}{l}\nabla_yf(\xb,\yb)^Tw^*+v^*=0\\ \big(w^*,(u^*,v^*)\big)\in M^*\end{array}\right\}\Rightarrow w^*=0, \nabla_xf(\xb,\yb)^Tw^*+u^*=0,\ M^*\in\Sp^* Q\big((\xb,\yb),-f(\xb,\yb)\big),\end{gather*}
which are equivalent to \eqref{EqIsolQSharp}, \eqref{eq-71} and  \eqref{eq-72}. This completes the proof.
\end{proof}
Recall that for SCD mappings $Q$ having the \ssstar property any of the three conditions \eqref{EqIsolQSharp}, \eqref{eq-71} and  \eqref{eq-72} is equivalent to the strong metric subregularity on a neighborhood of the mapping $F$ given by \eqref{eq-53} and thus the conditions \eqref{EqIsolQSharp}, \eqref{eq-71} and  \eqref{eq-72} are equivalent. Whereas
\eqref{eq-72} is a dual formulation of \eqref{eq-71}, conditions \eqref{EqIsolQSharp} and \eqref{eq-71} might look quite different. Let us shed light on this issue by the following application of Proposition \ref{PropGE} to parameterized variational inequalities with polyhedral constraint sets.

Consider the GE
\begin{equation}\label{EqHcompl}0\in H(x,y):=f(x,y)+N_D\big(g(x,y)\big),\end{equation}
where $f,g:\R^l\times\R^k\to\R^k$ are continuously differentiable and $D\subset\R^k$ is a convex polyhedral set, and let $0\in H(\xb,\yb)$. In what follows we denote by
\[\K_D(d,d^*):=T_D(d)\cap[d^*]^\perp,\ (d,d^*)\in\gph N_D\]
the critical cone to $D$ at $d$ for $d^*$.

Our further development makes use of the following  statements.
\begin{proposition}\label{PropN_D}
  Let $D\subset\R^k$ be a convex polyhedral set. Then the normal cone mapping $N_D(\cdot)$ is an SCD mapping, which is \ssstar at every point of its graph. Further, for every point $(d,d^*)\in\gph N_D$ there holds
  \begin{align}\label{EqSpN_D}
    \Sp N_D(d,d^*)&= \Sp^* N_D(d,d^*)=\{(\F-\F)\times(\F-\F)^\perp\mv \mbox{$\F$ is face of $\K_D(d,d^*)$}\},
  \end{align}
   and the outer limiting tangent cone $T^\sharp_{\gph N_D}(d,d^*)$ is the union of all sets $\gph N_{\F_1-\F_2}$, where $\F_1,\F_2$ are closed faces of $\K_D(d,d^*)$ with $\F_2\subset\F_1$.
\end{proposition}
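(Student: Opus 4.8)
The plan is to verify the three asserted properties of the normal-cone mapping $N_D$ separately, in increasing order of difficulty. The \ssstar property is immediate: $\gph N_D$ is the union of the finitely many polyhedral sets $F\times(\lin F)^\perp$ running over faces $F$ of $D$ (this is the standard Robinson decomposition), each of which is a polyhedral convex cone (up to translation), hence a closed convex set; by Proposition \ref{PropSSstar}(1),(2), $\gph N_D$ is \ssstar at each of its points. For the SCD property, I would first argue that $N_D$ is graphically smooth of dimension $k$ at every relative-interior point of each of these polyhedral pieces: near such a point $(d,d^*)$ with $d\in\ri F$, $\gph N_D$ locally coincides with the $k$-dimensional subspace $\lin F\times(\lin F)^\perp$, so $T_{\gph N_D}(d,d^*)$ equals that subspace and lies in $\Z_{kk}$. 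Since the union of these relative interiors is dense in $\gph N_D$, Lemma \ref{LemSCDproperty} gives that $N_D$ is an SCD mapping.

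Next I would establish the formula \eqref{EqSpN_D} for $\Sp N_D(d,d^*)$. The inclusion ``$\supseteq$'' is the heart of the matter: given a face $F$ of the critical cone $\K:=\K_D(d,d^*)$, I must produce a sequence $(d_j,d_j^*)\to(d,d^*)$ in $\OO_{N_D}$ with $T_{\gph N_D}(d_j,d_j^*)\to(F-F)\times(F-F)^\perp$. This is where the reduction lemma for polyhedral sets enters: locally around $(d,d^*)$, the graph of $N_D$ ``looks like'' the graph of $N_\K$ at the origin, more precisely $\gph N_D - (d,d^*)$ coincides near the origin with $\gph N_\K$ (a classical fact, see e.g. Dontchev--Rockafellar), so it suffices to work with the polyhedral cone $\K$ at $0$. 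The faces of $\K$ are exactly the sets $F = \K\cap V$ for appropriate supporting subspaces $V$; perturbing into the relative interior of the piece $F\times(\lin F)^\perp$ of $\gph N_\K$ yields points where the graphical derivative is precisely $(F-F)\times(F-F)^\perp = (\lin F)\times(\lin F)^\perp$, giving the claimed subspaces. For ``$\subseteq$'', any $L\in\Sp N_D(d,d^*)$ is a limit of subspaces $T_{\gph N_D}(d_j,d_j^*)$ taken at graphically smooth points; each such subspace is one of the finitely many $(\lin F_j)\times(\lin F_j)^\perp$ with $F_j$ a face of the corresponding critical cone $\K_D(d_j,d_j^*)$, and a continuity/stability argument for critical cones under small perturbations (every such $\K_D(d_j,d_j^*)$ is, for $(d_j,d_j^*)$ near $(d,d^*)$, a face-difference of $\K$) shows the limit is of the required form. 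Since each piece $F\times(\lin F)^\perp$ is its own orthogonal-type complement under the adjoint operation ($ (\lin F)^\perp $ and $\lin F$ swap roles, and $S_{kk}$ implements exactly this swap), the identity $\Sp N_D = \Sp^* N_D$ then follows from \eqref{eq-35}; this also reflects the well-known self-duality $(N_D)^{-1} = N_{D^\circ}$-type symmetry of the reduced cones.

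Finally, for the description of $T^\sharp_{\gph N_D}(d,d^*) = \gph D^\sharp N_D(d,d^*)$, I would again reduce to the critical cone $\K$ and recall that by definition $T^\sharp_{\gph N_D}(d,d^*) = \Limsup_{(d',{d'}^*)\to(d,d^*)} T_{\gph N_D}(d',{d'}^*)$. For $(d',{d'}^*)$ near $(d,d^*)$ in $\gph N_D$, a result of Dontchev--Rockafellar on polyhedral normal cones gives $T_{\gph N_D}(d',{d'}^*) = \gph N_{\K_D(d',{d'}^*)}$, and as $(d',{d'}^*)$ ranges over the graph near $(d,d^*)$ the critical cones $\K_D(d',{d'}^*)$ range exactly over the sets $F_1 - F_2$ with $F_2\subseteq F_1$ closed faces of $\K$. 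Taking the outer limit over this finite family of polyhedral cones just collects their graphs, yielding $T^\sharp_{\gph N_D}(d,d^*) = \bigcup \gph N_{F_1-F_2}$ over such pairs. The main obstacle throughout is marshalling the several known polyhedral facts---the Robinson face decomposition of $\gph N_D$, the local identification $\gph N_D \approx \gph N_{\K_D(d,d^*)}$, and the characterization of reduced critical cones under perturbation---and checking that the $\Limsup$ operations interact with these finite unions exactly as one expects; none of the individual steps is hard, but the bookkeeping of faces and their linear spans under the adjoint map must be done carefully.
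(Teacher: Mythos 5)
Your route coincides with the paper's on the one part that the paper proves in detail: the description of $T^\sharp_{\gph N_D}(d,d^*)$ is obtained there exactly as you propose, by combining $T_{\gph N_D}(d',d'^*)=\gph N_{\K_D(d',d'^*)}$ (\cite[Lemma 2E.4]{DoRo14}) with the Critical Superface Lemma (\cite[Lemma 4H.2]{DoRo14}) and using the finiteness of the family of faces both to pass to a subsequence along which the nearby critical cone is a fixed $\F_1-\F_2$ and, conversely, to realize every pair $\F_2\subset\F_1$ by points of $\gph N_D$ arbitrarily close to $(d,d^*)$; your ``collect the graphs of a finite family'' phrase compresses exactly these two directions. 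The \ssstar part is argued as in the paper (finite union of convex polyhedral pieces plus Proposition \ref{PropSSstar}). For the SCD property and formula \eqref{EqSpN_D} the paper simply cites \cite[Corollary 3.28, Example 3.29]{GfrOut22a}, whereas you reconstruct them from the polyhedral structure, which is a legitimate self-contained alternative.

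In that reconstruction, however, there is a concrete error. The polyhedral pieces of $\gph N_D$ are the sets $F\times N_F$, where $N_F$ denotes the common value of $N_D(d)$ for $d\in\ri F$, and \emph{not} $F\times(\lin F)^\perp$: the latter sets are in general not contained in $\gph N_D$ (take $D=\R_+$ and $F=\{0\}$; then $(\lin F)^\perp=\R$ while $N_D(0)=\R_-$). Correspondingly, graphical smoothness at $(d,d^*)$ requires $d^*\in\ri N_F$ in addition to $d\in\ri F$: for $D=\R_+$ and $(d,d^*)=(0,0)$ one has $d\in\ri\{0\}$, yet near $(0,0)$ the graph is a union of two rays and not a subspace, so your claim that the graph coincides locally with $\lin F\times(\lin F)^\perp$ whenever $d\in\ri F$ fails as written; the same misidentification of the pieces reappears after your reduction to $\gph N_{\K}$. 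Both slips are repairable: with the correct pieces, the sets $\ri F\times\ri N_F$ are dense in $\gph N_D$, and at such points the graph locally coincides with $L_F\times L_F^\perp$, where $L_F:=\lin(F-d)$ and $L_F^\perp=\lin N_F$ (the last equality by the standard duality between spans and lineality spaces of mutually polar polyhedral cones); this restores the density argument for Lemma \ref{LemSCDproperty}, the \ssstar argument, and---after the reduction to the critical cone---the ``$\supseteq$'' half of \eqref{EqSpN_D}. For the ``$\subseteq$'' half, rather than invoking ``a face of the corresponding critical cone'', it is cleaner to note that at a graphically smooth nearby point the tangent space is $\K_j\times\K_j^\perp$ with $\K_j=\K_D(d_j,d_j^*)$ itself a subspace; writing $\K_j=\F_1-\F_2$ by the Critical Superface Lemma, a subspace squeezed between $\F_1$ and $\F_1-\F_1$ must equal $\F_1-\F_1$, so each such tangent space is already of the form appearing in \eqref{EqSpN_D}, and finiteness forces the limit to be one of them.
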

\begin{proof}
  Since the normal cone mapping $N_D$ is the subdifferential mapping of the convex lsc function $\delta_D$, it is an SCD mapping by \cite[Corollary 3.28]{GfrOut22a}. Further, since $\gph N_D$ is the union of finitely many convex polyhedral sets, $N_D$ is \ssstar at every point of its graph by Propsoition \ref{PropSSstar}. Formula \eqref{EqSpN_D} can be found in  \cite[Example 3.29]{GfrOut22a} and there remains to show the representation of $T^\sharp_{\gph N_D}(d,d^*)$.

  For any $(d',d'^*)\in\gph N_D$ there holds $T_{\gph N_D}=\gph N_{\K_D(d',d'^*)}$, cf. \cite[Lemma 2E4]{DoRo14}. Further, by the Critical Superface Lemma \cite[Lemma 4H.2]{DoRo14}, for every sufficiently small neighborhood $W$ of $(d,d^*)$ the collection of all critical cones $\K_D(d',d'^*)$, $(d',d'^*)\in\gph N_D\cap W$ coincides with the collection of the so-called critical superfaces $\F_1-\F_2$, where $\F_1,\F_2$ are faces of the critical cone $\K_D(d,d^*)$ with $\F_2\subset\F_1$. Now consider a quadruple $\big((d,d^*),(e,e^*)\big)$ satisfying the relation $(e,e^*)\in T^\sharp_{\gph N_D}(d,d^*)$ together with sequences $\big((d_k,d_k^*),(e_k,e_k^*)\big)\to\big((d,d^*),(e,e^*)\big)$ with $(e_k,e_k^*)\in T_{\gph N_D}(d_k,d_k^*)$. Since the convex polyhedral set $D$ has only finitely many faces, after possibly passing to a subsequence we can assume that there are two faces $\F_1, \F_2$ of $\K_D(d,d^*)$ with $\F_2\subset\F_1$ such that $\K_D(d_k,d_k^*)=\F_1-\F_2$ for all $k$. Thus, $(e_k,e_k^*)\in T_{\gph N_D}(d_k,d_k^*)=\gph N_{\K_D(d_k,d_k^*)}=\gph N_{\F_1-\F_2}$ for all $k$ and $(e,e^*)\in \gph N_{\F_1-\F_2}$ follows. Conversely, let $\F_2\subset\F_1$ be two faces of $\K_D(d,d^*)$ and let $(e,e^*)\in \gph N_{\F_1-\F_2}$. Then there exists some sequence $(d_k,d_k^*)\longsetto{\gph N_D}(d,d^*)$ with $\K_D(d_k,d_k^*)=\F_1-\F_2$ $\forall k$ so that $(e,e^*)\in \gph N_{\F_1-\F_2}=T_{\gph N_D}(d_k,d_k^*)$ $\forall k$ implying $(e,e^*)\in T^\sharp_{\gph N_D}(d,d^*)$. The statement has been established.
\end{proof}
\begin{proposition}
  In the setting of \eqref{EqHcompl}, assume that $g(\xb,\yb)\in D$ and the Jacobian $\nabla g(\xb,\yb)$ has full row rank $k$. Then the mapping $Q(x,y):\R^l\times\R^k\tto\R^k$ given by $Q(x,y)=N_D\big(g(x,y)\big)$ has the SCD property around $\big((\xb,\yb),d^*\big)$ and is \ssstar around $\big((\xb,\yb),d^*\big)$ for every $d^*\in N_D\big(g(\xb,\yb)\big)$. Further one has
  \begin{align*}&\lefteqn{\Sp Q\big((\xb,\yb),d^*\big)=}\\
  &\Big\{\{\big((u,v),e^*\big)\mv \myvec{\nabla g(\xb,\yb)(u,v)\\ e^*}\in(\F-\F)\times(\F-\F)^\perp\}\bmv \mbox{$\F$ is face of $\K_D(g(\xb,\yb),d^*)$}\Big\}
  \end{align*}
  and
  \begin{align}\label{EqTsharpQcompl}&\lefteqn{T^\sharp_{\gph Q}\big((\xb,\yb),d^*\big)=}\\
  \nonumber&\Big\{\big((u,v),e^*\big)\bmv \myvec{\nabla g(\xb,\yb)(u,v)\\ e^*}\in\gph N_{\F_1-\F_2} \mbox{ for faces $\F_1,\F_2$ of $\K_D(g(\xb,\yb),d^*)$ with $\F_2\subset\F_1$}\Big\}.
  \end{align}
\end{proposition}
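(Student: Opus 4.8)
The plan is to realize $Q$ as the preimage of $\gph N_D$ under a mapping with surjective Jacobian and then read everything off from the calculus already established. Set $\Phi\big((x,y),z\big):=\big(g(x,y),z\big)$; this is a $C^1$ mapping from $\R^l\times\R^k\times\R^k$ into $\R^k\times\R^k$ and $\gph Q=\{\big((x,y),z\big)\mv\Phi\big((x,y),z\big)\in\gph N_D\}$. Its Jacobian
\[\nabla\Phi\big((x,y),z\big)=\begin{pmatrix}\nabla g(x,y)&0\\0&I_k\end{pmatrix}\]
has full row rank $2k$ exactly when $\nabla g(x,y)$ has full row rank $k$; by continuity of $\nabla g$ this is the case on a whole neighborhood of $(\xb,\yb)$, so $\nabla\Phi$ is surjective at every point of $\gph Q$ sufficiently close to $\big((\xb,\yb),d^*\big)$.

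Next I would derive the SCD statement. By Proposition \ref{PropN_D} the mapping $N_D$ has the SCD property at every point of its graph and $\Sp N_D\big(g(\xb,\yb),d^*\big)$ is the collection \eqref{EqSpN_D}. Since $\nabla\Phi$ is surjective near $\big((\xb,\yb),d^*\big)$, Theorem \ref{ThCalc1} applies at $\big((\xb,\yb),d^*\big)$ and at every nearby point of $\gph Q$: it delivers the SCD property of $Q$ around $\big((\xb,\yb),d^*\big)$ together with the preimage form of \eqref{eq-46} already used in the proofs of Propositions \ref{PropCalc1} and \ref{PropGE}, namely $\Sp Q\big((\xb,\yb),d^*\big)=\{\nabla\Phi\big((\xb,\yb),d^*\big)^{-1}M\mv M\in\Sp N_D\big(g(\xb,\yb),d^*\big)\}$ (surjectivity of $\nabla\Phi$ turns the implicit description of Theorem \ref{ThCalc1} into this preimage). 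Since $\nabla\Phi\big((\xb,\yb),d^*\big)\big((u,v),e^*\big)=\big(\nabla g(\xb,\yb)(u,v),e^*\big)$, inserting $M=(\F-\F)\times(\F-\F)^\perp$ yields the asserted description of $\Sp Q\big((\xb,\yb),d^*\big)$.

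For the \ssstar property I would appeal to Proposition \ref{PropSSstarNew} with $A=\gph Q$, constraint mapping $\Phi$, and target set $\gph N_D$, applied at each $w'=\big((x',y'),z'\big)\in\gph Q$ near $\big((\xb,\yb),d^*\big)$. For such $w'$ the Jacobian $\nabla\Phi(w')$ is surjective, so by \cite[Example 9.44]{RoWe98} the mapping $\big((x,y),z\big)\mapsto\Phi\big((x,y),z\big)-\gph N_D$ is metrically regular around $(w',0)$, in particular metrically subregular at $(w',0)$; moreover $\gph N_D$ is \ssstar at $\Phi(w')=\big(g(x',y'),z'\big)$ by Proposition \ref{PropN_D}. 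Hence $\gph Q$ is \ssstar at $w'$, and since this holds at every such $w'$, $Q$ is \ssstar around $\big((\xb,\yb),d^*\big)$.

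Finally, formula \eqref{EqTsharpQcompl} is immediate from Proposition \ref{PropCalcTsharp}: since $\nabla\Phi\big((\xb,\yb),d^*\big)$ has full row rank, the inclusion there holds with equality, giving
\[T^\sharp_{\gph Q}\big((\xb,\yb),d^*\big)=\big\{\big((u,v),e^*\big)\mv\big(\nabla g(\xb,\yb)(u,v),e^*\big)\in T^\sharp_{\gph N_D}\big(g(\xb,\yb),d^*\big)\big\},\]
into which I insert the description of $T^\sharp_{\gph N_D}\big(g(\xb,\yb),d^*\big)$ as the union of the graphs $\gph N_{\F_1-\F_2}$ over faces $\F_2\subset\F_1$ of $\K_D\big(g(\xb,\yb),d^*\big)$ supplied by Proposition \ref{PropN_D}. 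The whole argument is essentially bookkeeping; the only step deserving a second look is confirming that the block form of $\nabla\Phi$ upgrades the full-row-rank hypothesis on $\nabla g(\xb,\yb)$ to surjectivity of $\nabla\Phi$, and that this surjectivity — hence every invoked calculus rule — persists on a neighborhood, which is clear from the structure of $\nabla\Phi$ and the continuity of $\nabla g$.
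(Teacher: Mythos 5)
Your proposal is correct and follows essentially the same route as the paper: realize $\gph Q$ as the preimage of $\gph N_D$ under $\Phi\big((x,y),z\big)=\big(g(x,y),z\big)$, use the persistence of full row rank of $\nabla\Phi$ near the reference point, and then invoke Theorem \ref{ThCalc1} with Proposition \ref{PropN_D} for the SCD part and the $\Sp Q$ formula, Proposition \ref{PropSSstarNew} (via metric regularity from \cite[Example 9.44]{RoWe98}) for the \ssstar property, and Proposition \ref{PropCalcTsharp} for \eqref{EqTsharpQcompl}. The only cosmetic difference is that you rewrite \eqref{eq-46} in preimage form, which is equivalent under the surjectivity of $\nabla\Phi$.
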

\begin{proof}
  Obviously
  \[\gph Q=\{\big((x,y),d^*\big)\mv \Phi(x,y,d^*):=\myvec{g(x,y)\\d^*}\in\gph N_D\}.\]
  The full-rank assumption imposed on $\nabla g(\xb,\yb)$ ensures that $\nabla g(x,y)$ has full row rank for all $(x,y)$ belonging to some neighborhood $U$ of $(\xb,\yb)$ and it follows that $\nabla \Phi(x,y,d^*)$ has full row rank for all $(x,y,d^*)\in U\times\R^k$. Hence, by Theorem \ref{ThCalc1}, for any $(x,y,d^*)\in\gph Q\cap U\times\R^k$ the mapping $Q$ has the SCD property at $\big((x,y),d^*\big)$ and, together with \eqref{EqSpN_D},
  \begin{align*}&\lefteqn{\Sp Q\big((x,y),d^*\big)=}\\
  &\Big\{\{\big((u,v),e^*\big)\mv \myvec{\nabla g(x,y)(u,v)\\ e^*}\in(\F-\F)\times(\F-\F)^\perp\}\bmv \mbox{$\F$ is face of $\K_D(g(x,y),d^*)$}\Big\}.
  \end{align*}
  Further, the mapping $(x,y,d^*)\tto \Phi(x,y,d^*)-\gph N_D$ is metrically regular around the point $\big((x,y,d^*),(0,0)\big)$ by  \cite[Example 9.44]{RoWe98} and consequently also metrically subregular. This allows us to invoke Proposition \ref{PropSSstarNew} in order to guarantee the \ssstar property of $Q$ at $\big((x,y),d^*\big)$. Finally, formula \eqref{EqTsharpQcompl} follows from Proposition \ref{PropCalcTsharp} and Lemma \ref{PropN_D}.
\end{proof}
Taking into account our above considerations about the equivalence of \eqref{EqIsolQSharp}, \eqref{eq-71} and the strong metric subregularity of $F$ on a neighborhood  for \ssstar SCD mappings $Q$, we arrive at the following result.
\begin{proposition}\label{PropSurprise}
  In the setting of \eqref{EqHcompl}, assume that $\big((\xb,\yb),0\big)\in\gph H$ and that the Jacobian $\nabla g(\xb,\yb)$ has full row rank $k$. Then the following statements are equivalent.
  \begin{enumerate}
    \item[(i)]The mapping $F:\R^l\times\R^k\tto\R^l\times\R^k$ given by
    \[F(x,y)=\myvec{x\\f(x,y)+N_D\big(g(x,y)\big)}\]
    is strongly metrically subregular around $\big((\xb,\yb),(\xb,0)\big)$.
    \item[(ii)]The implication
    \begin{equation}\label{EqCompl}
    \left.\begin{array}{l}
    \nabla_yg(\xb,\yb)v\in \F-\F\\
    -\nabla_yf(\xb,\yb)v\in (\F-\F)^\perp
    \end{array}\right\}\ \Rightarrow\ v=0
\end{equation}
holds for every face $\F$ of the critical cone $\K_D\big(g(\xb,\yb), -f(\xb,\yb)\big)$.
    \item[(iii)] The implication
  \begin{align}\label{EqCompl1}
  \left.\begin{array}{l}\nabla_y g(\xb,\yb)v\in\F_1-\F_2\\
    -\nabla_y f(\xb,\yb)v\in(\F_1-\F_2)^\circ\\
    \skalp{\nabla_yg(\xb,\yb)v,-\nabla_yf(\xb,\yb)v}=0
    \end{array}\right\}\ \Rightarrow\ v=0
  \end{align}
  holds for every pair $\F_1,\F_2$ of faces of the critical cone $\K_D\big(g(\xb,\yb),-f(\xb,\yb)\big)$ with $\F_2\subset\F_1$.
  \end{enumerate}
\end{proposition}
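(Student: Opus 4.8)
The plan is to apply Proposition~\ref{PropGE} to the GE~\eqref{EqHcompl} with $Q(x,y):=N_D\big(g(x,y)\big)$, and then to translate its abstract conditions into the polyhedral language of faces of the critical cone. First I would record that the hypotheses of Proposition~\ref{PropGE}(ii) are met here. Since $\big((\xb,\yb),0\big)\in\gph H$ we have $-f(\xb,\yb)\in N_D\big(g(\xb,\yb)\big)$, so in particular $g(\xb,\yb)\in D$; together with the full-row-rank assumption on $\nabla g(\xb,\yb)$, the proposition preceding the present one then tells us that $Q$ has the SCD property around $\big((\xb,\yb),-f(\xb,\yb)\big)$, is \ssstar on a neighborhood of that point, and it supplies the explicit descriptions of $\Sp Q\big((\xb,\yb),-f(\xb,\yb)\big)$ and of $T^\sharp_{\gph Q}\big((\xb,\yb),-f(\xb,\yb)\big)$ in terms of the faces of $\K:=\K_D\big(g(\xb,\yb),-f(\xb,\yb)\big)$. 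By the remark following Proposition~\ref{PropGE}, precisely because $Q$ is here an \ssstar SCD mapping, statement (i) --- strong metric subregularity of $F$ on a neighborhood of $\big((\xb,\yb),(\xb,0)\big)$ --- is equivalent to each of the conditions \eqref{EqIsolQSharp} and \eqref{eq-71} of Proposition~\ref{PropGE}. So it remains to show that \eqref{eq-71} unfolds to (ii) and \eqref{EqIsolQSharp} unfolds to (iii).

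For (i)$\Leftrightarrow$(ii) I would substitute $u=0$ and $d^*=-f(\xb,\yb)$ into the formula for $\Sp Q$. An element $M\in\Sp Q\big((\xb,\yb),-f(\xb,\yb)\big)$ is of the form $\{\big((u,v),e^*\big)\mv \nabla g(\xb,\yb)(u,v)\in\F-\F,\ e^*\in(\F-\F)^\perp\}$ for a face $\F$ of $\K$; hence $\big((0,v),-\nabla_yf(\xb,\yb)v\big)\in M$ holds if and only if $\nabla_yg(\xb,\yb)v\in\F-\F$ and $-\nabla_yf(\xb,\yb)v\in(\F-\F)^\perp$, the block with $\nabla_x g(\xb,\yb)$ dropping out because $u=0$. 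Thus \eqref{eq-71} is verbatim condition \eqref{EqCompl}, i.e.\ statement (ii).

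For (i)$\Leftrightarrow$(iii) I would substitute $u=0$ and $d^*=-f(\xb,\yb)$ into \eqref{EqTsharpQcompl}, which gives $e^*\in D^\sharp Q\big((\xb,\yb),-f(\xb,\yb)\big)(0,v)$ if and only if $\big(\nabla_yg(\xb,\yb)v,e^*\big)\in\gph N_{\F_1-\F_2}$ for some faces $\F_2\subset\F_1$ of $\K$. Since $\F_1-\F_2$ is a polyhedral convex cone, membership $(w,w^*)\in\gph N_{\F_1-\F_2}$ means $w\in\F_1-\F_2$, $w^*\in(\F_1-\F_2)^\circ$ and $\skalp{w,w^*}=0$. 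Because $0\in\nabla_yf(\xb,\yb)v+D^\sharp Q(\cdots)(0,v)$ forces $e^*=-\nabla_yf(\xb,\yb)v$, condition \eqref{EqIsolQSharp} becomes verbatim \eqref{EqCompl1}, i.e.\ statement (iii). Combining these two translations with the cited equivalences, all three statements (i), (ii), (iii) are equivalent.

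I expect the only real obstacle to be careful bookkeeping --- keeping the Jacobian blocks $\nabla_x,\nabla_y$ straight under the substitution $u=0$, and correctly unfolding $\gph N_{\F_1-\F_2}$ into the membership, polarity and complementarity conditions appearing in \eqref{EqCompl1}. No direct combinatorial argument relating (ii) and (iii) is needed: the apparent gap between the single-face, orthogonal-complement formulation and the two-face, polar-plus-complementarity formulation is bridged automatically through their common equivalence with (i), which is exactly what makes the proposition mildly surprising.
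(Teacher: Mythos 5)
Your proposal is correct and takes essentially the same route as the paper: it reduces (i) to the conditions \eqref{EqIsolQSharp} and \eqref{eq-71} via Proposition \ref{PropGE} and the remark following it (which applies because the preceding proposition, under the full-row-rank assumption on $\nabla g(\xb,\yb)$, certifies that $Q(x,y)=N_D\big(g(x,y)\big)$ is an \ssstar SCD mapping around $\big((\xb,\yb),-f(\xb,\yb)\big)$), and then unfolds these two conditions into (ii) and (iii) using the explicit face descriptions of $\Sp Q$ and $T^\sharp_{\gph Q}$. The bookkeeping with $u=0$ and the identification of $\gph N_{\F_1-\F_2}$ with the membership, polarity and complementarity conditions is exactly what the paper intends, so there is nothing to add.
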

This result is quite surprising since the implications in (ii) are only  a proper subset of those in (iii) with $\F_1=\F_2$ and it is by no means evident why the remaining implications in (iii) with $\F_2\not=\F_1$ are superfluous. These considerations demonstrate that for testing the strong metric subregularity on a neighborhood of semismooth* SCD mappings the outer limiting graphical derivative might be much too large.

Concerning the isolated calmness of the solution map $\Sigma$ related to \eqref{EqHcompl}, we arrive at the following result.

\begin{proposition}\label{PropCompl}
In the setting of \eqref{EqHcompl}, let $0\in H(\xb,\yb)$. If the implication
\begin{equation*}
  \left.\begin{array}{l}
    \nabla_yg(\xb,\yb)v\in \F-\F\\
    -\nabla_yf(\xb,\yb)v\in (\F-\F)^\perp
  \end{array}\right\}\ \Rightarrow\ v=0
\end{equation*}
holds for every face $\F$ of the critical cone $\K_D\big(g(\xb,\yb), -f(\xb,\yb)\big)$, then the respective solution mapping $\Sigma:\mathbb{R}^{l}\rightrightarrows\mathbb{R}^{k}$ is isolatedly calm around $(\bar{x},\bar{y})$.
\end{proposition}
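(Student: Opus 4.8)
The plan is to verify condition \eqref{EqIsolQSharp} of Proposition \ref{PropGE}(i) for the mapping $Q(x,y):=N_D\big(g(x,y)\big)$ and then to invoke that proposition; this route needs no rank hypothesis on $g$, because Proposition \ref{PropGE}(i) rests only on Theorem \ref{Th5.1}(i) and on the chain rule for $D^\sharp H$ carried out through the always nonsingular change of variables $\big((x,y),z\big)\mapsto\big((x,y),z-f(x,y)\big)$. First I would put $d^*:=-f(\xb,\yb)$ and $\K:=\K_D\big(g(\xb,\yb),d^*\big)$ and write $\gph Q=\{\big((x,y),d^*\big)\mv \Psi(x,y,d^*)\in\gph N_D\}$ with $\Psi(x,y,d^*):=\big(g(x,y),d^*\big)$. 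Combining the inclusion part of Proposition \ref{PropCalcTsharp} (which is valid without any rank condition on $\Psi$) with the description of $T^\sharp_{\gph N_D}$ from Proposition \ref{PropN_D} then shows that $D^\sharp Q\big((\xb,\yb),d^*\big)(0,v)$ is contained in the set of all $e^*$ for which $\big(\nabla_yg(\xb,\yb)v,e^*\big)\in\gph N_{\F_1-\F_2}$ for some closed faces $\F_2\subset\F_1$ of $\K$. Hence, if $0\in\nabla_yf(\xb,\yb)v+D^\sharp Q\big((\xb,\yb),d^*\big)(0,v)$, then there exist faces $\F_2\subset\F_1$ of $\K$ with $\nabla_yg(\xb,\yb)v\in\F_1-\F_2$, $-\nabla_yf(\xb,\yb)v\in(\F_1-\F_2)^\circ$ and $\skalp{\nabla_yg(\xb,\yb)v,-\nabla_yf(\xb,\yb)v}=0$. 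Consequently \eqref{EqIsolQSharp} holds — and, by Proposition \ref{PropGE}(i), $\Sigma$ is isolatedly calm around $(\xb,\yb)$ — provided one knows that implication \eqref{EqCompl1}, written for the cone $\K$ and the matrices $\nabla_yg(\xb,\yb),\nabla_yf(\xb,\yb)$, is in force.

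The only thing still to be done, and the main obstacle, is to deduce \eqref{EqCompl1} from the hypothesis; that is, to show that the family of implications \eqref{EqCompl} indexed by the single faces $\F$ of $\K$ already forces the a priori larger family \eqref{EqCompl1} indexed by the pairs $\F_2\subset\F_1$. This is precisely the nontrivial direction ``(ii)$\Rightarrow$(iii)'' of Proposition \ref{PropSurprise}. I would obtain it here in full generality (no rank hypothesis on $g$) by an embedding argument: both \eqref{EqCompl} and \eqref{EqCompl1} depend only on the polyhedral cone $\K$ and on the matrices $B:=\nabla_yg(\xb,\yb)$ and $C:=\nabla_yf(\xb,\yb)$, so it suffices to exhibit one instance of \eqref{EqHcompl} carrying these data to which Proposition \ref{PropSurprise} applies. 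Taking $l:=k$, $\widetilde D:=\K$, $\widetilde g(x,y):=(x-\xb)+B(y-\yb)$ and $\widetilde f(x,y):=C(y-\yb)$, one has $\widetilde g(\xb,\yb)=0\in\widetilde D$, $-\widetilde f(\xb,\yb)=0\in N_{\widetilde D}(0)=\K^\circ$, $\K_{\widetilde D}(0,0)=T_{\widetilde D}(0)\cap\R^k=\K$, $\nabla_y\widetilde g(\xb,\yb)=B$, $\nabla_y\widetilde f(\xb,\yb)=C$, whereas $\nabla\widetilde g(\xb,\yb)=(I_k\mid B)$ has full row rank $k$. Proposition \ref{PropSurprise} therefore applies to the auxiliary generalized equation $0\in\widetilde f(x,y)+N_{\widetilde D}\big(\widetilde g(x,y)\big)$ at $(\xb,\yb)$ and yields the equivalence of \eqref{EqCompl} and \eqref{EqCompl1} for the data $(\K,B,C)$; in particular the assumed \eqref{EqCompl} implies \eqref{EqCompl1}, which completes the proof.

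Alternatively, one may bypass Proposition \ref{PropSurprise} and establish ``\eqref{EqCompl}$\Rightarrow$\eqref{EqCompl1}'' directly as a combinatorial statement about the faces of the polyhedral cone $\K$; I would expect such a direct argument to hinge on the Critical Superface Lemma \cite[Lemma 4H.2]{DoRo14}, already used in the proof of Proposition \ref{PropN_D}, in order to pass between critical cones at $(\xb,\yb)$ and at neighbouring points of $\gph N_D$.
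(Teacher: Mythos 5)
Your proposal is correct, but it takes a genuinely different route from the paper's proof. The paper argues in two cases: if $\nabla g(\xb,\yb)$ has full row rank $k$, the conclusion follows at once from Proposition \ref{PropSurprise}; if not, it augments the parameter, replacing $g(x,y)$ by $\tilde g\big((x,p),y\big)=g(x,y)-p$, so that the augmented Jacobian automatically has full row rank while $\nabla_y\tilde g$, $\nabla_y\tilde f$ and hence the face hypothesis are unchanged; the full-rank case then applies to the augmented GE and isolated calmness of $\Sigma$ is inherited from $\Sigma(x)=\tilde\Sigma(x,0)$. You instead verify the general criterion \eqref{EqIsolQSharp} of Proposition \ref{PropGE}(i) for the original $Q(x,y)=N_D\big(g(x,y)\big)$, using only the inclusion part of Proposition \ref{PropCalcTsharp} together with the face-pair description of $T^\sharp_{\gph N_D}$ from Proposition \ref{PropN_D}; this correctly reduces everything to the combinatorial implication \eqref{EqCompl}$\Rightarrow$\eqref{EqCompl1} for the data $(\K,B,C)$, which you obtain by applying Proposition \ref{PropSurprise} to the synthetic full-rank GE with $\tilde D=\K$, $\tilde g(x,y)=(x-\xb)+B(y-\yb)$, $\tilde f(x,y)=C(y-\yb)$. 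The hypotheses of that proposition are indeed met there ($0\in\tilde H(\xb,\yb)$, $\K_{\tilde D}(0,0)=\K$ with the same faces, $\nabla\tilde g(\xb,\yb)=(I_k\mid B)$ of full row rank), and there is no circularity since Proposition \ref{PropSurprise} precedes Proposition \ref{PropCompl}. So both proofs pass through a full-rank auxiliary problem, but differently: the paper transfers the stability property itself from the parameter-augmented GE back to the original one, which is shorter and never requires computing $D^\sharp Q$ for rank-deficient $g$, whereas your route transfers only the face-condition equivalence and yields as a by-product that the single-face condition \eqref{EqCompl} already implies the general outer-limiting-derivative condition \eqref{EqIsolQSharp} of Proposition \ref{PropGE}(i) with no rank assumption on $g$. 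Your closing suggestion of a direct combinatorial proof of \eqref{EqCompl}$\Rightarrow$\eqref{EqCompl1} is only a sketch, but it is not needed for completeness of your argument.
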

\begin{proof}
    If $\nabla g(\xb,\yb)$ has full row rank, the assertion follows from Proposition \ref{PropSurprise}.

  If the Jacobian $\nabla g(\xb,\yb)$ does not possess full row rank, we simply consider the generalized equation
  \[0\in \tilde H\big((x,p),y\big)=\tilde f\big((x,p),y\big)+N_D\big(\tilde g\big((x,p),y\big)\big)\]
  where $\tilde f,\tilde g:\R^l\times\R^k\times\R^k\to\R^k$ are given by
  \[\tilde f\big((x,p),y\big)=f(x,y),\ \tilde g\big((x,p),y\big)=g(x,y)-p.\]
  Since the Jacobian $\nabla \tilde g\big((x,p),y\big)$ has full row rank and $\nabla_y \tilde g\big((x,p),y\big)=\nabla_yg(x,y)$, $\nabla_y \tilde f\big((x,p),y\big)=\nabla_yf(x,y)$ for all $(x,p,y)$, we can conclude that the respective solution mapping $\tilde \Sigma:\R^l\times\R^k\to \R^l$ is isolatedly calm around $\big((\xb,0),\yb\big)$ and, together with the observation that $\Sigma(x)=\tilde\Sigma(x,0)$ $\forall x\in\R^l$, the isolated calmness of $\Sigma$ around $(\xb,\yb)$ follows.
\end{proof}

Let us illustrate the above conditions via a simple academic example.
\begin{example}
Let $l=k=1$ and consider the parameterized GE \eqref{EqHcompl}, where $f(x,y)=-y$, $g(x,y)=y-x$ and $D=\R_+$. With $(\xb,\yb)=(0,0)$ we observe that all the assumptions of Proposition \ref{PropCompl} are fulfilled,
\[\K_D\big(g(\xb,\yb),-f(\xb,\yb)\big)=T_D\big(g(\xb,\yb)\big)=\R_+,\]
and one has to consider the faces $\F_1=\R_+$, $\F_2=\{0\}$. We have thus to check the validity of the implications
\begin{gather*}\myvec{v\\v}\in(\F_1-\F_1)\times (\F_1-\F_1)^\perp=\R\times\{0\}\ \Rightarrow\ v=0\\
\myvec{v\\v}\in(\F_2-\F_2)\times (\F_2-\F_2)^\perp=\{0\}\times\R\ \Rightarrow\ v=0,
\end{gather*}
which are evidently fulfilled. Consequently the respective solution mapping $\Sigma$
is isolatedly calm around $(0,0)$. This conclusion is correct because, as one can easily compute,
\[
\Sigma(x)=\begin{cases}\{0\}\cup \{x\} &\mbox{ if $x \leq 0$,}\\
\emptyset&\mbox{otherwise.}
\end{cases}\]
Note that $\Sigma$ does not have the Aubin property around $(0,0)$.\hfill{$\triangle$}
\end{example}

Finally, let us compare condition (\ref{eq-55}) with a standard criterion for the Aubin property of $\Sigma$ around the reference point. On the basis of the theory from \cite[Chapter 4]{M06} one obtains the following result.
\begin{proposition}
Consider the inclusion $0 \in H(x,y)$ and assume that the implication
\begin{equation}\label{eq-100}
(u^*,0)\in D^{*}H\big((\xb,\yb),0\big)(w^*)\Rightarrow w^*=0,\ u^*=0
\end{equation}
holds true.
 Then $\Sigma$ has the Aubin property around $(\bar{x},\bar{y})$.
\end{proposition}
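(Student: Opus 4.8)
The plan is to reduce the Aubin property of $\Sigma$ to the metric regularity of the extended mapping $F$ from \eqref{eq-53} and then apply the Mordukhovich criterion, exactly paralleling the approach used for Theorem \ref{Th5.1}(i) but with $D^\sharp H$ replaced by $D^*H$. First I would recall the observation made just before Theorem \ref{Th5.1}: any stability property of $\Sigma$ around $(\xb,\yb)$ is inherited from the corresponding property of $F^{-1}$ around $\big((\xb,\yb),(\xb,0)\big)$, where $F(x,y)=(x,H(x,y))^T$. Since the Aubin property of $F^{-1}$ around $\big((\xb,0),(\xb,\yb)\big)$ is equivalent to the metric regularity of $F$ around $\big((\xb,\yb),(\xb,0)\big)$, and the latter is characterized by the Mordukhovich criterion \eqref{EqMoCrit}, it suffices to show that \eqref{eq-100} implies
\[
0\in D^*F\big((\xb,\yb),(\xb,0)\big)(w_0^*,w^*)\ \Rightarrow\ (w_0^*,w^*)=0.
\]

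The key step is therefore a coderivative calculus computation for $F$. Since $F$ has the block structure $F(x,y)=(x,H(x,y))^T$ — the first component being the identity in $x$ and independent of $y$, the second being $H$ — I would compute $D^*F$ either directly from the graph (note $\gph F=\{(x,y,x,z)\mv z\in H(x,y)\}$, which is just $\gph H$ with a coordinate $x$ duplicated) or via the sum/product rules of \cite[Chapter 4]{M06}. Duplicating the coordinate yields, for $(x^*,y^*)\in D^*F\big((\xb,\yb),(\xb,0)\big)(w_0^*,w^*)$, the relation $y^*\in$ (the $y$-part of $D^*H((\xb,\yb),0)(w^*)$) and $x^*=w_0^*+$ (the $x$-part of $D^*H((\xb,\yb),0)(w^*)$). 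Setting $x^*=0$, $y^*=0$ gives that $0$ lies in the $y$-part of $D^*H((\xb,\yb),0)(w^*)$ and that $-w_0^*$ equals the $x$-part; writing the $x$-part as $u^*:=-w_0^*$, this is precisely the hypothesis of \eqref{eq-100} in the form $(u^*,0)\in D^*H\big((\xb,\yb),0\big)(w^*)$, which forces $w^*=0$ and $u^*=0$, hence $w_0^*=0$. Thus the Mordukhovich criterion holds for $F$, $F$ is metrically regular around $\big((\xb,\yb),(\xb,0)\big)$, and $\Sigma$ inherits the Aubin property around $(\xb,\yb)$.

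The main obstacle I anticipate is getting the coderivative formula for the block mapping $F$ exactly right — in particular keeping track of which dual variable pairs with which primal component, and making sure the "duplicated $x$" does not introduce a spurious qualification condition. The cleanest route is to work directly with $\gph F$: the linear map $(x,y,z)\mapsto(x,y,x,z)$ is injective, so $N_{\gph F}$ at a point is the image of $N_{\gph H}$ under the adjoint of this injection (no constraint qualification needed), which translates to the additive splitting of the $x^*$-component described above. Everything else is the routine translation between metric regularity of $F$, the Aubin property of $F^{-1}=$ (the map whose restriction to the slice $z=0$ is $\Sigma$), and the reduction lemma already invoked for Theorem \ref{Th5.1}; these require no new ideas beyond what the excerpt provides.
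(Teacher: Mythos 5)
Your argument is correct, but it follows a different route than the paper, which gives no self-contained proof at all: the paper simply appeals to the implicit-multifunction theory of \cite[Chapter 4]{M06}, where one works directly with $\Sigma$ — the part of \eqref{eq-100} with $0\in D^*H\big((\xb,\yb),0\big)(w^*)\Rightarrow w^*=0$ serves as the qualification condition yielding the upper estimate of $D^*\Sigma(\xb,\yb)$ in terms of $D^*H\big((\xb,\yb),0\big)$, and the remaining part gives the Mordukhovich criterion $D^*\Sigma(\xb,\yb)(0)=\{0\}$ for $\Sigma$ itself. You instead apply the Mordukhovich criterion \eqref{EqMoCrit} to the extended mapping $F$ from \eqref{eq-53}: since $\gph F$ is carried onto $\gph H\times\{0\}$ by the nonsingular linear map $(x,y,p,z)\mapsto(x,y,z,p-x)$, the coderivative of $F$ is computed exactly and without any constraint qualification, namely $(x^*,y^*)\in D^*F\big((\xb,\yb),(\xb,0)\big)(w_0^*,w^*)$ if and only if $(x^*-w_0^*,y^*)\in D^*H\big((\xb,\yb),0\big)(w^*)$; putting $x^*=y^*=0$ turns \eqref{eq-100} into exactly the Mordukhovich criterion for $F$, so $F$ is metrically regular around $\big((\xb,\yb),(\xb,0)\big)$, and restricting the Aubin inclusion for $F^{-1}$ to the slice $z=z'=0$ and projecting onto the $y$-component transfers this to the Aubin property of $\Sigma$, in line with the paper's reduction remark preceding Theorem \ref{Th5.1}. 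What your route buys is self-containedness within the paper's own framework (the same extended mapping $F$ as in Theorem \ref{Th5.1}), no external calculus rule and no CQ, plus the slightly stronger conclusion that $F$ itself is metrically regular; the paper's citation-based route works directly with $\Sigma$ and is the standard reference argument. One small inaccuracy in your wording: $N_{\gph F}$ at the reference point is not the \emph{image} of $N_{\gph H}\big((\xb,\yb),0\big)$ under the adjoint of the injection $(x,y,z)\mapsto(x,y,x,z)$ (that image lives in the wrong space), but its \emph{preimage} under this adjoint, i.e. $N_{\gph F}=\{(x^*,y^*,p^*,z^*)\mv (x^*+p^*,y^*,z^*)\in N_{\gph H}\big((\xb,\yb),0\big)\}$ — which is precisely the additive splitting of the $x^*$-component that you actually use, so the computation and the conclusion stand.
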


Note that condition (\ref{eq-100}) ensures both a qualification condition needed to compute the coderivative of $\Sigma$ and the satisfaction of the Mordukhovich criterion $D^{*}\Sigma(\bar{x},\bar{y})(0)=\{0\}$. Since $L^*\subset \gph D^{*}H(\bar{x},\bar{y},0)$ $\forall L^*\in \Sp H\big((\xb,\yb),0\big)$, 
 it follows that for $H$ being SCD and \ssstar around $(\bar{x},\bar{y})$ condition (\ref{eq-100}) implies not only the Aubin property but also the isolated calmness of $\Sigma$ around $(\bar{x},\bar{y})$. This is an important fact emphasizing the importance of the SCD and \ssstar property in stability issues. Observe that the conjunction of the Aubin and the isolated calmness property represents a new useful stability notion, where the isolated calmness specifies the nature of Lipschitzian behavior and the Aubin property ensures the non-emptiness of a localization.

\section{Conclusion}
As explained in Corolary 4.3, graphically Lipschitzian mappings of dimension $n$ are SCD mappings for which both SC limiting derivatives can be computed. For GEs with such multi-valued parts thus the respective conditions \eqref{eq-71} and  \eqref{eq-72} can be used in a large number of parameterized GEs corresponding, e.g., to variational inequalities of the 2nd kind, hemivariational inequalities or implicit complementarity problems.

In \cite{GfrOut22a} one finds also a relationship between SCD and strong metric regularity. This indicates that in some cases the SC limiting derivatives could be used also to ensure that an implicitly defined mapping has a single-valued and Lipschitzian localization around the reference point. This task we postpone to a future research.
\section*{Acknowledgements}
This paper is dedicated to Roger J.-B. Wets on the ocassion of his 85th birthday. Apart from many excellent results Roger coauthored the monograph \cite{RoWe98} which became an indispensable part of our equipment in any work connected with modern variational analysis.

The second author expresses his gratitude for the support from the  Grant Agency of the Czech Republic, Project 21- 06569K,  and from the Australian Research Council,  Project DP160100854.

\end{document}